\documentclass[a4paper, 10pt]{amsproc}
\usepackage{defs}
\usepackage{orcidlink}
\usepackage{calc}
\usepackage{graphicx,wrapfig,lipsum}
\usepackage[dvipsnames]{xcolor}
\usepackage{tabularray}
\usepackage{empheq} 
\newtcolorbox{empheqboxed}{colback=white, 
    colframe=black,
    boxrule=0.25mm,
    width=\columnwidth,
    sharpish corners,
    top=-2mm, 
    left=2pt,
    bottom=5pt
}
\definecolor{metablue}{HTML}{0064E0}
\definecolor{metafg}{HTML}{1C2B33}
\definecolor{metabg}{HTML}{F1F4F7}
\definecolor{metabgdeep}{HTML}{D9EFFF}
\definecolor{metagreen}{HTML}{EAFFE8}
\definecolor{metagreen}{HTML}{FCFFEE}
\definecolor{metared}{HTML}{FFEAE8}

\newtheorem{theorem}{Theorem}

\newtheorem{remark}{Remark}

\newtheorem{proposition}{Proposition}
\newtheorem{lemma}{Lemma}
\newtheorem{corollary}{Corollary}

\DeclareSymbolFont{extraup}{U}{zavm}{m}{n}
\DeclareMathSymbol{\varheart}{\mathalpha}{extraup}{86}
\DeclareMathSymbol{\vardiamond}{\mathalpha}{extraup}{87}
\DeclareMathSymbol{\varclub}{\mathalpha}{extraup}{84}
\DeclareMathSymbol{\vardspade}{\mathalpha}{extraup}{85}

\usepackage[framemethod=tikz,xcolor=true]{mdframed}

\newmdenv[backgroundcolor=metabgdeep, roundcorner=10pt, skipabove=4pt, linewidth=0pt, innertopmargin=4pt]{myframe}

\newmdenv[backgroundcolor=metabgdeep, roundcorner=10pt, skipabove=4pt, linewidth=1pt, innertopmargin=4pt]{myOCP}

\newmdenv[backgroundcolor=metared, roundcorner=10pt, skipabove=7pt, linewidth=0pt, innertopmargin=7pt]{myalgo}
\newmdenv[%
    leftmargin=0.5cm,
    backgroundcolor=yellow!10,%
    roundcorner=5pt,%
    tikzsetting={draw=red, line width=2.0pt}%
    ]{SpecialText}%

    \newmdenv[
  backgroundcolor=metabg,
  roundcorner=5pt,
  skipabove=4pt,
  linewidth=0.5pt,
  innertopmargin=3pt,
  tikzsetting={draw=black, line width=0.5pt},
  frametitlefont=\bfseries,
  frametitlebackgroundcolor=blue!10,
  frametitleaboveskip=0.5ex,
  frametitlebelowskip=0.5ex
]{myOCPboxTitle}

\usepackage{cleveref}

\newcounter{ocpproblem}

\crefname{ocpproblem}{Problem}{Problems}
\Crefname{ocpproblem}{Problem}{Problems}

\newenvironment{problembox}[1][]{%
  \refstepcounter{ocpproblem}%
  \begin{myOCPboxTitle}[frametitle={Problem~\theocpproblem#1}]%
}{%
  \end{myOCPboxTitle}%
}

\title[UnBalOT-DenCon]{Globally Solving Unbalanced Optimal Transport and Density Control for Gaussian Distributions}

\author[H. Nakashima]{Haruto Nakashima}
\author[S. Ganguly]{Siddhartha Ganguly\,\orcidlink{0000-0003-2046-2061}}
\author[K. Kashima]{Kenji Kashima\,\orcidlink{0000-0002-2963-2584}}

\thanks{%
	S. Ganguly is with \faGroup\ Daniel Guggenheim School of Aerospace, \faUniversity\ Georgia Institute of Technology, \faMapMarker\  Atlanta, USA. N. Nakashima and K. Kashima are with \faGroup\ The Applied Mathematics and Physics Department, Graduate School of Informatics, \faUniversity\ Kyoto University, \faMapMarker\  Kyoto, Japan.}

\thanks{%
	Contact Information: (HN): \faEnvelope\ \texttt{nakashima.haruto.72v@st.kyoto-u.ac.jp}, (SG) \faHome\ \url{https://sites.google.com/view/siddhartha-ganguly}, \faEnvelope\ \texttt{sganguly41@gatech.edu}, (KK): \faEnvelope\ \texttt{kk@i.kyoto-u.ac.jp}, \faHome\ \url{https://www.bode.amp.i.kyoto-u.ac.jp/en/kk/}.
}

\begin{document}
\subjclass[2020]{93E20, 93E03, 49K20, 49L99, 58E25, 65K10} 
\keywords{Optimal transport, stochastic optimal control, density control.}

\maketitle

\begin{abstract}
In this article, we study unbalanced optimal transport (UOT) and establish a control-theoretic dynamical extension, which we call \emph{the unbalanced density control} (UDC), for a class of Gaussian reference measures. In the static setting, we consider UOT with quadratic transport cost and Kullback--Leibler penalties on the marginals relative to prescribed Gaussian measures. We show that the infinite-dimensional variational problem admits an \emph{exact Gaussian reduction}, yielding a finite-dimensional optimization over masses, means, and covariances, together with a closed-form expression for the optimal transported mass. We then formulate UDC for discrete-time linear systems, where the initial and terminal state measures are imposed softly through KL penalties and the intermediate evolution is governed by controlled linear dynamics with quadratic control cost. For this problem, we prove that any feasible solution can be replaced, without loss of optimality, by a Gaussian initial measure and an affine-Gaussian control policy. This leads to an exact finite-dimensional reformulation and, after a standard covariance-steering lifting, to an SDP-based optimization for fixed mass, again coupled with a closed-form mass update. We further establish existence of optimal solutions and identify a sufficient condition under which the affine-Gaussian UDC policy is deterministic. These results provide globally optimal solution methods for both Gaussian UOT and Gaussian UDC. Finally, we illustrate our results with several numerical examples. 
\end{abstract}

\section{Introduction}\label{sec:introduction}

Optimal transport (OT) provides a fundamental framework for comparing two distributions by assigning a minimal cost to transporting mass from one to the other. Originating in the work of Monge \cite{Monge1781}, subsequently relaxed by Kantorovich \cite{ref:Kantoro:prob}, and later given a dynamic fluid-mechanical interpretation by J.-D. Benamou and Y. Brenier  \cite{Benamou2000}, OT has become a central tool across mathematics \cite{villani_ot}, control theory \cite{ref:YC:TTG:MV:Part-I,ef:YC:TTG:MV:Part-II,ef:YC:TTG:MV:Part-III,ref:SOC:SB:YC,ref:Halder:Wendel:WassDensity}, image processing \cite{Papadakis2015,Gazdieva2025,Bon2025} and computer science \cite{Peyre2020,Peyre2025}. At a \emph{very informal level}, given two probability measures \(\mu\) and \(\nu\), the OT problem seeks an optimal mechanism to rearrange the mass distributed according to \(\mu\) into that prescribed by \(\nu\), while minimizing a specified transportation cost. In the classical Monge formulation \cite{Monge1781}, for the quadratic cost on \(\R^d\), one seeks a measurable map \(T:\R^d \longrightarrow \R^d\) such that \(T_{\#}\mu = \nu\), and which minimizes the transportation cost
\begin{equation}
\label{eq:MongeProblem}
\inf_{\aset[]{T \suchthat T_{\#}\mu=\nu} } \int_{\R^d} \norm{x-T(x)}^2 \,\rmd \mu(x). \tag{\textsf{(MP)}}
\end{equation}
However, an admissible transport map need not exist in general, and even when it does, the Monge formulation is often too rigid for analysis.\footnote{\(T_\#\mu\) denotes the pushforward of \(\mu\) by \(T\), which is defined by \((T_\#\mu)(A)\Let \mu(T^{-1}(A))\) for all Borel subsets \(A\subset \R^d\). } To overcome this difficulty, Kantorovich introduced a relaxed formulation in which one optimizes over transport plans, rather than deterministic maps \cite{ref:Kantoro:prob}. Specifically, for every Borel subset \(E \subset \R^d\), one considers the set of \emph{transport plans}
\begin{align}
    \Pi(\mu,\nu) \Let \left\{ 
    \pi\, \middle\vert\,
    \begin{array}{@{}l@{}}
    \mu(E) = \pi(E \times \Rd),\,
    \nu(E) = \pi(\Rd \times E)\,\,\text{for all } E\in\mathcal{B}(\Rd)\end{array}
    \right\}\nn
\end{align}
and solves the variational problem 
\begin{equation}\label{eq:KantoProblem}
\inf_{\pi \in \Pi(\mu,\nu)} \int_{\Rd}\int_{\Rd}
\norm{x-y}^2 \rmd\pi(x,y). \tag{\textsf{(KP)}}
\end{equation}
Unlike the Monge problem \eqref{eq:MongeProblem}, the formulation \eqref{eq:KantoProblem} permits mass located at a point \(x\) to be split and transported to multiple destinations, thereby yielding a substantially more flexible and more importantly, \emph{numerically viable framework}. While static formulations \eqref{eq:MongeProblem} and \eqref{eq:KantoProblem} captures the geometry of mass redistribution, dynamic formulations make the temporal aspect explicit. In particular, the same transport problem can be interpreted as the evolution of a time-dependent density (via a \emph{continuity equation}) that connects \(\mu\) to \(\nu\) over a finite time horizon \cite{Benamou2000}. This viewpoint (the so-called Benamou-Brenier formulation) established a control-theoretic formulation of OT \cite{ref:SOC:SB:YC,ref:YC:TG:MP:SB-and-OT}.
\subsection{Motivations}\label{subsec:motivation}
The classical OT problems \eqref{eq:MongeProblem} and \eqref{eq:KantoProblem} are balanced in the sense that the transport plan has prescribed source and target marginals, and hence the two measures must carry the \emph{same total mass}. In many applications, however, the two objects being compared \emph{do not have the same total mass}. This occurs, for example, when mass is created, destroyed \cite{ref:Chen:UOT:1}, dissipated \cite{ref:Chen:UOT:3}, unobserved, or only partially available. In a control-theoretic context, this issue appears naturally when the endpoint distributions are not exact boundary conditions, but desired or estimated reference profiles. For example, consider a collection of mobile agents, particles, or resources whose aggregate state is described by a finite nonnegative measure. At the initial time, the available population may be known only approximately from measurements, and its effective mass may represent the number of active agents, the amount of material, the total probability weight assigned to reliable observations, or the confidence in the initial estimate. At the terminal time, the desired profile may prescribe a target spatial distribution or coverage pattern, but not necessarily with the same total mass as the initial profile. Some agents may be unavailable, some mass may be irrelevant or unobserved, and the desired terminal profile may only be partially attainable under the available control authority.

In such a situation, a balanced density-steering or covariance-steering formulation \cite{ref:CovSteer:I,ref:CovSteer:II,ref:CovSteer:III,ref:CovSteer:V:Bakolas,ref:CovSteer:VII:Okamoto,liu2024reachability,ref:CovSteer:X:DD:CDC:NH:SG:KK} can be too restrictive. If the endpoint distributions are imposed as hard constraints, then the controller is forced to match the prescribed initial and terminal distributions exactly, even when these distributions should only be interpreted as reference profiles. Moreover, after normalizing the measures into probability distributions, one loses the information encoded in their total masses. Thus, a formulation based only on balanced probability measures cannot distinguish between a high-confidence reference profile with large mass and a weak or partial reference profile with small mass. Thus, we ask: 
\begin{quote}
   \emph{Can we establish a control formulation that transports a dynamically evolving state measure while allowing mismatch from the endpoint references in a quantified way?}  
\end{quote}
This brings us to \emph{unbalanced optimal transport}. Instead of requiring exact marginal constraints, UOT penalizes deviations from prescribed reference measures, often through Kullback--Leibler (KL) divergence terms. As a consequence, UOT furnishes a mathematically natural framework for comparing finite nonnegative measures with unequal masses. Again, at an \emph{informal level}, given two finite nonnegative measures \(\alpha,\beta\) on \(\R^d\), one seeks a transport plan (with suitable regularity) \(\pi\) on \(\R^d \times \R^d\) whose marginals \(\pi_1\) and \(\pi_2\) are not required to coincide exactly with \(\alpha\) and \(\beta\), but are instead penalized for deviating from them. A standard (informal) UOT formulation is \cite{Sejourne2023,Chapel2021,Gazdieva2024,Pham2020,CHIZAT20183090}
\begin{equation}
        \begin{aligned}
            \inf_{\pi} 
            \int_{\Rd}\int_{\Rd} & \|x_2-x_1\|^2 \,\rmd \pi(x_1,x_2)
            + \gamma \KL{\pi_1}{\alpha}
            + \gamma \KL{\pi_2}{\beta}, \nn
        \end{aligned}
    \end{equation}
where \(\gamma>0\). Here, \(\alpha\) plays the role of the source reference measure and \(\beta\) that of the target reference measure, while the KL penalties relax the hard marginal constraints of the Kantorovich problem and allow for mass variation. Applications of UOT have emerged in several fields, including ML \cite{Sejourne2023} and control \cite{ref:Chen:UOT:1,ref:Chen:UOT:2,ref:Chen:UOT:3}.

The purpose of this paper is to develop the analogous idea for controlled dynamical systems. We consider a finite nonnegative state measure evolving under a discrete-time linear system. The total mass of the controlled state measure is preserved along the dynamics, but it is not prescribed a priori by either endpoint reference. Instead, the optimization chooses the transported mass and the control policy jointly: the running cost penalizes control effort, while KL terms penalize deviation of the initial and terminal state measures from the prescribed Gaussian references. We call this problem \emph{unbalanced density control} (UDC). Thus, UDC is a density-control problem in which the controller balances three competing objectives: remaining close to the initial reference, reaching close to the terminal reference, and spending minimal control energy. This viewpoint is natural for Gaussian references. In linear systems, Gaussian distributions and their first two moments are central to covariance steering and stochastic control \cite{ref:KJ:PT:Com:Efi:CDC,ref:KO:MG:PT:Opt:CovCont:LCSS,ref:FW:SG:PT}. However, the unbalanced setting introduces an additional degree of freedom: the transported mass itself becomes a decision variable. The resulting problem therefore asks not only how the mean and covariance should be steered, but also how much mass should be transported when exact agreement with the endpoint references is neither required nor desirable.

\subsection{Natural questions}\label{subsec:intro:question:motiv} 

The preceding example motivates UDC as a soft-endpoint analogue of density steering. However, it also reveals a basic mathematical difficulty. Once the initial and terminal measures are imposed through KL penalties rather than hard constraints, the optimization is no longer a standard covariance-steering problem with prescribed endpoint moments. In the static case, UOT is posed over all finite nonnegative transport plans. In the dynamical case, UDC is posed over the initial state measure and over feedback policies. Thus, even under linear dynamics and quadratic control cost, the problem is infinite-dimensional.

Also, it is not immediate that Gaussian reference measures should lead to Gaussian optimizers in either the static or dynamical unbalanced problems. In the static UOT problem, the optimizer could in principle be a non-Gaussian transport plan. In UDC, the optimal initial measure and the optimal control policy could also be non-Gaussian, nonlinear, or genuinely randomized. This raises several natural questions:
\begin{myOCP}
\vspace{1mm}
\begin{enumerate}[label={\textup{(\(\mathsf{Q}\)-\alph*)}}, leftmargin=*, widest=b, align=left]
    \item \label{ques:1} Can one formulate a control-theoretic analogue of UOT in which a finite nonnegative state measure evolves under linear dynamics, endpoint matching is imposed softly through KL penalties, and control effort is penalized along the trajectory?
    \item \label{ques:2} If the reference measures are Gaussian, i.e., \(\alpha \Let c_{\alpha}\,\mathcal{N}(m_{\alpha},\Sigma_{\alpha})\) and \(\beta \Let c_{\beta}\,\mathcal{N}(m_{\beta},\Sigma_{\beta})\), can the static UOT problem be reduced, without loss of optimality, to a finite-dimensional optimization over mass and second-order moments?
    \item \label{ques:3} In the dynamical setting, can one similarly restrict attention, without loss of optimality, to Gaussian initial measures and affine-Gaussian control laws?
    \item \label{ques:4} If such reductions are possible, can the resulting optimization problems be solved \emph{globally}, rather than merely approximately or heuristically?
    \item \label{ques:5} In the UDC problem, does one genuinely need a randomized control covariance term in the optimal policy, or does deterministic affine feedback already suffice?
\end{enumerate}
\end{myOCP}
These questions are precisely the ones addressed in this paper. We answer \ref{ques:1} positively by proposing a concrete UDC formulation for discrete-time linear systems with soft initial and terminal KL penalties. We answer \ref{ques:2} and \ref{ques:3} by proving exact Gaussian-reduction results for the static and dynamical problems, respectively. We answer \ref{ques:4} by deriving globally solvable finite-dimensional reformulations: a convex inner problem plus a scalar mass update for UOT, and an SDP-based reduced problem plus an analogous mass update for UDC. Finally, \ref{ques:5} is answered \emph{negatively} under an additional regularity condition: if the optimal state covariance sequence is positive definite, then, under the standing nonsingularity assumption on the system matrix, the optimal affine-Gaussian UDC policy is deterministic.

\subsection{Contributions}\label{subsec:intro:contrib}

\begin{enumerate}[leftmargin=*, widest=b, align=left]
\item \textbf{Unbalanced density-control formulation.} We formulate UDC as a control-theoretic dynamical counterpart of UOT for discrete-time linear systems. The state measure evolves according to linear dynamics, the running cost is quadratic in the control, and the initial and terminal specifications are imposed through KL penalties relative to Gaussian reference measures. This provides an unbalanced analogue of density and covariance steering.

\item \textbf{Gaussian reduction and global solution for UOT.} We show that, although UOT is originally posed over general transport plans, one may restrict attention, without loss of optimality, to \emph{Gaussian marginals} and the corresponding \emph{affine Gaussian coupling}. This yields a finite-dimensional reduction in terms of mass, means, and covariances. The reduced problem separates into a convex moment optimization and a scalar mass optimization, the latter admitting a closed-form solution.

\item \textbf{Gaussian reduction and global solution for UDC.} For the dynamical problem, we prove that any feasible solution can be replaced by a Gaussian initial measure and an affine-Gaussian feedback law constructed from the first and second moments of the original solution, with no increase in cost. This leads to a finite-dimensional SDP reformulation for fixed mass, combined with a closed-form mass update.

\item \textbf{Existence and structure of optimal solutions.} We establish existence of minimizers for the reduced UOT and UDC problems. We further give a structural sufficient condition for deterministic optimality in UDC. Thus, randomized affine-Gaussian controls are useful for the reduction argument, while deterministic affine feedback suffices whenever the optimal covariance trajectory remains positive definite.

\item \textbf{Entropy-regularized extensions.} We also discuss entropic and maximum-entropy variants, including an entropic UOT model and a MaxEnt UDC problem. The same Gaussian-reduction viewpoint extends naturally to these settings and yields tractable finite-dimensional formulations.
\end{enumerate}
\vspace{1mm}
\textbf{Organization:} This article unfolds as follows. In \S\ref{sec:prob_form}, we introduce the UOT and UDC problems. In \S\ref{sec:main_result_uot}, we develop the main Gaussian-reduction framework for the UOT problem together with an optimization-based solution method; we also present several results for the entropy-regularized variant of UOT. The corresponding theoretical and algorithmic developments for the UDC problem are given in \S\ref{sec:main_result_udc}. In \S\ref{sec:maxent_udc}, we discuss the maximum-entropy variant of the UDC problem. Finally, in \S\ref{sec:NumExp}, we present several numerical examples.
\section{Problem formulation}\label{sec:prob_form}

Before formulating the primary problem, we introduce the notation that will be used throughout this article. 


\subsubsection{Notation}\label{subsec:notation}
We employ standard notations throughout the article. We write \(\R\) for the set of real numbers and \(\bbN \Let \aset[]{1,2,\dots}\) for the set natural numbers. Let \(p \ge 1\) and fix \(d \in \bbN\); over the \(d\)-dimensional Euclidean space \(\Rd\), we denote the set of finite nonnegative measures with finite \(p\)-th moments by \(\calM_p^{+}(\Rd)\). For \(\mu \in \calM_p^{+}(\Rd)\), we denote its total mass by $\int \rmd \mu$. If \(\int\rmd\mu>0\), we define the \emph{normalized} probability measure \(\bar{\mu}\Let\frac{\mu}{\int_{\Rd}\rmd \mu}\). Therefore, \(\bar \mu\) is always a probability measure. The space of integrable and nonnegative functions over \(\Rd\) will be denoted by \(L^{1}_{+}(\Rd)\). For \(m \in \Rd\) and \(\Sigma \in \mathbb{R}^{d \times d}\) with \(\Sigma \succeq \textup{O}\) (the null matrix), we denote by $\calN(m,\Sigma)$ the Gaussian probability measure with mean $m$ and covariance $\Sigma$. When \(\Sigma \succ \textup{O}\) its Lebesgue density is written as \(\calN(x;m,\Sigma)\).  For a nonnegative scalar $c\ge 0$,  we call a measure of the form \(c\,\calN(m,\Sigma)\) a (unbalanced) \emph{Gaussian measure}. For $\alpha, \beta \in  \calM_p^{+}(\Rd)$ with \(\alpha\) absolutely continuous with respect to \(\beta\), the Kullback--Leibler (KL) divergence is defined by:
\begin{equation*}\label{eq:KL_def}
    \KL{\alpha}{\beta} \coloneqq \int_{\mathbb{R}^d} \log \left(\frac{\rmd\alpha}{\rmd\beta}(x)\right) \rmd\alpha(x) - \int_{\Rd} \rmd\beta + \int_{\Rd} \rmd\alpha,
\end{equation*}
and \(\KL{\alpha}{\beta} = +\infty\) otherwise.
For a probability density \(p\) on \(\Rd\), the entropy is defined by \(\ent{p}\Let-\int_{\Rd}p(x)\log p(x) \rmd x.\) For a matrix \(X\in\R^{d\times d}\), \(X^\dagger\) denotes its Moore-Penrose inverse, and \(I\) denotes identity matrix of appropriate dimension.

\subsubsection{Reference measures} We begin this section with the standard UOT problem between two Gaussian reference measures; see \eqref{eq:reference-measures} ahead. We then introduce a control-theoretic dynamical extension of this problem, in which a distribution evolves under discrete-time linear dynamics and the mismatch with the prescribed endpoint measures is penalized through the standard Kullback--Leibler divergence. This leads to the \emph{unbalanced density control formulation} studied in this article.\footnote{Here, by ``dynamical extension'' we do not mean the standard PDE-constrained dynamical formulation of UOT; see, e.g., \cite[\S 3.2]{Sejourne2023}, \cite{CHIZAT20183090}. Rather, we mean a control-theoretic formulation in which the state distribution evolves under a prescribed linear dynamical system.}

Fix \(d \in \bbN\), and let \(c_{\alpha},c_{\beta}>0\), \(m_{\alpha},m_{\beta} \in \Rd\), and \(\Sigma_{\alpha},\,\Sigma_{\beta}\succ \textup{O}\). Define the Gaussian measures
\begin{equation}\label{eq:reference-measures}
    \alpha \Let c_\alpha \calN(m_\alpha,\Sigma_\alpha), \qquad
    \beta  \Let c_\beta  \calN(m_\beta,\Sigma_\beta),
\end{equation}
which serve as the reference measures throughout the article.

\subsection{UOT problem}\label{subsec:UOT:problem}
Given \eqref{eq:reference-measures}, we begin by specifying the problem data associated with the UOT problem. Let \(\pi \in \calM_2^{+}(\Rd \times \Rd)\) and suppose that \(E\) is a Borel subset of \(\Rd\). We define its marginals \(\pi_1\) and \(\pi_2\) by
\begin{align}\label{eq:marginal-def}
    \pi_1(E) \Let \pi(E \times \Rd), \quad
    \pi_2(E) \Let \pi(\Rd \times E) 
\end{align}
for every \(E \subset \Rd\). Let \(\gamma>0\) and define the ground cost \(\Rd \times \Rd \ni (x_1,x_2) \mapsto \ell(x_1,x_2)\coloneqq \|x_2-x_1\|^2\). With these ingredients, we consider the unbalanced optimal transport problem:

\begin{problembox}[\,(UOT problem)]\label{prob:uot}
Given the preceding problem data, find an optimal transport plan \(\pi \in \calM_2^{+}(\Rd \times \Rd)\) that solves the variational problem 
    \begin{equation}\label{eq:uot}
        \begin{aligned}
           &\inf_{\pi}  J_1(\pi)  \Let 
            \int_{\Rd}\int_{\Rd}  \hspace{-2mm} \|x_2-x_1\|^2 \,\rmd \pi(x_1,x_2)  + \gamma \KL{\pi_1}{\alpha}
            + \gamma \KL{\pi_2}{\beta},
        \end{aligned}
    \end{equation}
    where the marginals \(\pi_1\) and \(\pi_2\) are defined in \eqref{eq:marginal-def}.  
\end{problembox}
For the finiteness of the objective function in \eqref{eq:uot}, it is necessary that the marginals \(\pi_1\) and \(\pi_2\) be absolutely continuous with respect to Lebesgue measure; equivalently, they admit densities. This follows from the fact that \(\Sigma_{\alpha}\) and \(\Sigma_{\beta}\) are positive definite; see \eqref{eq:reference-measures}. Indeed, let \(\nu \Let \calN(m,\Sigma)\) with \(\Sigma \succ \textup{O}\), and suppose that
\(\widehat{\pi}\) is a finite nonnegative measure on \(\Rd\) which is not absolutely continuous with respect to Lebesgue measure \(\lambda\). Then there exists a measurable set \(A \subset \Rd\) such that
\(\lambda(A)=0\) and \(\widehat{\pi}(A)>0.\) Since \(\Sigma \succ \textup{O}\), the Gaussian measure \(\nu\) admits a Lebesgue density, and hence \(\nu\) is absolutely continuous with respect to \(\lambda\). Therefore, \(\lambda(A)=0\)
implies \(\nu(A)=0\). Thus \(\nu(A)=0\) while \(\pi(A)>0\), which shows that \(\widehat{\pi}\) is not absolutely continuous with respect to \(\nu\). Consequently, \(\KL{\widehat{\pi}}{\nu}=+\infty.\) Applying this observation to \(\pi_1\) and \(\pi_2\), we conclude that finiteness of \(\KL{\pi_1}{\alpha}\) and \(\KL{\pi_2}{\beta}\) requires absolute continuity of \(\pi_1,\pi_2\) with respect to \(\lambda.\)

\subsection{Dynamical formulation}\label{subsec:dynamical:formulation}
We now introduce a control-theoretic (dynamical) extension of Problem \ref{prob:uot}. Fix \(T,d,d'\in \bbN\); given the Gaussian reference measures in \eqref{eq:reference-measures}, we consider a discrete-time linear system
\begin{align}\label{eq:dynamics}
x_{t+1}=Ax_t+Bu_t \quad\text{for all }\, t=1,\dots,T-1,
\end{align}
with the problem data: 
\begin{enumerate}[label={(\textup{\ref{eq:dynamics}-\alph*})}, leftmargin=*, widest=b, align=left]
\item at time \(t\), \(x_t\in\R^d\) and \(u_t\in\R^{d'}\) represent the state and control variables, respectively;
\item \(A\in\R^{d\times d}\) and \(B\in\R^{d\times d'}\) are the system and control matrices; we assume that \(A\) is nonsingular. 
\end{enumerate}
For each \(t=1,\dots,T-1\), let \(U_t(\cdot\mid x)\) be a Borel probability measure on \(\R^{d'}\), measurable in \(x\), and interpreted as the conditional law of the control given the current state. We then define the state measures \(\{\pi_t\}_{t=1}^T\) recursively by
\begin{align}\label{eq:state:transition}
\pi_{t+1} = (Ax+Bu)_\#\bigl(\pi_t(\mathrm{d}x)\,U_t(\mathrm{d} u|x)\bigr),  
\end{align}
for \(t=1,\dots,T-1\). Since each \(U_t(\cdot|x)\) is a probability measure, the total mass is preserved along the dynamics, that is, \(\pi_t(\Rd)=\pi_1(\Rd)\) for \(t=1,\dots,T\). With \(\pi_1\in\calM_2^+(\Rd)\) and \(\mathbf{U}\Let\{U_t\}_{t=1}^{T-1}\), we define admissible set of controls by
\begin{align*}
\mathcal{U} \Let \left\{(\pi_1,\mathbf{U}) \,\middle\vert  \,
\begin{array}{@{}l@{}}
U_t(\cdot|x) \text{ is a Borel probability measure on }\R^{d'}\\ 
\text{which is measurable in }x,\;\eqref{eq:state:transition} \text{ holds},\\  \pi_t\in\calM_2^+(\Rd) \text{ for all }t=1,\dots,T \\ \text{and } \sum_{t=1}^{T-1}\int_{\Rd \times \R^{d'}}\|u\|^2\,
U_t(\rmd u|x)\pi_t(\rmd x)<\hspace{-1mm}+\infty
\end{array}
\right\}
\end{align*}
and consequently our unbalanced density control problem is: 
\begin{problembox}[\,(UDC problem)]\label{prob:udc}
 Let the horizon \(T\in\bbN\) be fixed and assume that the admissible set \(\mathcal{U}\) is nonempty. Given the preceding problem data, find an optimal initial measure \(\pi_1\in\calM_2^+(\Rd)\) and control policy \(\mathbf{U}\Let\{U_t\}_{t=1}^{T-1}\) that solves
    \begin{equation}\label{eq:udc}
        \begin{aligned}
            \inf_{(\pi_1,\mathbf{U})\in\mathcal{U}} \quad & \hspace{-3mm}J_2\bigl(\pi_1,\{U_t\}_{t=1}^{T-1}\bigr) \Let \exb{\sum_{t=1}^{T-1}\|u_t\|^2}  + \gamma \KL{\pi_1}{\alpha}+ \gamma \KL{\pi_T}{\beta}.  
        \end{aligned}
    \end{equation}   
\end{problembox} 
 In \eqref{eq:udc}, with slight abuse of notation, the expectation in the objective is understood as:
    \[
    \exb{\sum_{t=1}^{T-1}\|u_t\|^2} \Let
    \sum_{t=1}^{T-1}\int_{\Rd}\!\int_{\mathbb{R}^{d'}} \|u\|^2 \,\rmd U_t(u|x)\,\rmd \pi_t(x).
    \]
    We emphasize that $\pi_t$ is not necessarily a probability measure.

\section{Global Solutions to UOT}
\label{sec:main_result_uot}
We begin with a well-posedness result for Problem \ref{prob:uot}.

\begin{proposition}[Existence of a minimizer for UOT]\label{prop:uot-existence}
Consider Problem \ref{prob:uot} along with its data and notations. Let \(\alpha, \beta\) be as defined in \eqref{eq:reference-measures} and let \(\gamma>0\). Then, there exists an optimal transport plan \(\pi^*\in\calM^{+}_{2}(\Rd\times\Rd)\) that globally minimizes the UOT objective \eqref{eq:uot}.
\end{proposition}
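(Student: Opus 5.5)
We will use the direct method of the calculus of variations on the set of finite nonnegative plans. First, the infimum is finite. The zero measure \(\pi\equiv 0\) is admissible, since \(\KL{0}{\alpha}=c_\alpha\) and \(\KL{0}{\beta}=c_\beta\); hence \(\inf J_1\le \gamma(c_\alpha+c_\beta)<\infty\). Every term of \(J_1\) is nonnegative, because the generalized KL divergence between finite measures is nonnegative, so \(\inf J_1\ge 0\). We then take a minimizing sequence \((\pi^n)\) with \(J_1(\pi^n)\le M\) for all \(n\).

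The second step is to extract uniform bounds. With \(c=\pi^n(\Rd\times\Rd)=\pi^n_1(\Rd)\), Jensen's inequality gives \(\KL{\pi^n_1}{\alpha}\ge c\log(c/c_\alpha)-c+c_\alpha\). This is superlinear in \(c\), so the masses \(\pi^n(\Rd\times\Rd)\) are uniformly bounded. For tightness and second moments we use the Donsker--Varadhan (Gibbs) variational inequality for unnormalized measures,
\[
\int f\,\rmd\mu \le \KL{\mu}{\nu}+\int (e^{f}-1)\,\rmd\nu .
\]
Applied with \(f(x)=\varepsilon\|x\|^2\), \(\varepsilon< 1/(2\lambda_{\max}(\Sigma_\alpha))\), it shows that \(\int\|x_1\|^2\rmd\pi^n_1\) is bounded, because \(e^{\varepsilon\|x\|^2}\) is \(\alpha\)-integrable. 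The same argument gives a bound on \(\int\|x_2\|^2\rmd\pi^n_2\). Consequently \(\int(\|x_1\|^2+\|x_2\|^2)\rmd\pi^n\) is uniformly bounded. Together with the mass bound, this yields tightness, and Prokhorov's theorem (for finite measures with bounded mass) gives a subsequence with \(\pi^n\rightharpoonup\pi^*\) narrowly. Marginals are continuous under narrow convergence, so \(\pi^n_i\rightharpoonup\pi^*_i\). The total mass also converges, since the constant function \(1\) is a bounded continuous test function.

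The third step is lower semicontinuity. The transport term \(\pi\mapsto\int\|x_2-x_1\|^2\rmd\pi\) is narrowly l.s.c. because the integrand is continuous and bounded below; we truncate with \(\min(\ell,k)\) and let \(k\to\infty\). Each KL term is jointly l.s.c. under narrow convergence by the dual representation
\[
\KL{\mu}{\nu}=\sup_{f\in C_b}\Bigl\{\int f\,\rmd\mu-\int(e^f-1)\,\rmd\nu\Bigr\},
\]
which exhibits it as a supremum of narrowly continuous functionals. Here \(\nu=\alpha\) or \(\nu=\beta\) is fixed. Hence \(J_1(\pi^*)\le\liminf J_1(\pi^n)=\inf J_1\).

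Finally, \(\pi^*\in\calM_2^+\) follows from Fatou's inequality for narrow limits applied to the uniformly bounded second moments, so \(\pi^*\) is a global minimizer. I expect the main obstacle to be the tightness and second-moment step: a KL bound alone controls neither escape of mass to infinity nor moments unless one exploits the Gaussian tails of \(\alpha\) and \(\beta\). The exponential-moment form of Donsker--Varadhan is exactly the tool that supplies this control. The remaining steps are standard.
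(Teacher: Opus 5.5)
Your argument is correct, but it takes a different route from the paper. The paper does not run the direct method itself. It checks the hypotheses of the abstract existence theorem of Liero--Mielke--Savar\'e (Theorem 3.3 in \cite{Liero_2017}): a Polish base space, finite Radon reference measures, a nonnegative l.s.c.\ cost, feasibility via the zero plan, and superlinearity \(F'_\infty=+\infty\) of \(F(s)=\gamma(s\log s-s+1)\). Existence then follows from that theorem.

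Your proof is a self-contained, specialised version of what that theorem does internally: a mass bound, tightness, Prokhorov, and lower semicontinuity of the cost and of the entropy via its dual representation. This buys you two things. First, you do not need the general machinery. Second, you settle a point the paper leaves implicit. The cited theorem produces a minimizer among all finite nonnegative measures, and one must still check that it lies in \(\mathcal{M}_2^{+}\). Your Donsker--Varadhan bound with \(f=\varepsilon\|x\|^2\), together with l.s.c.\ of second moments under narrow convergence, gives exactly this, because any plan whose marginals have finite KL relative to the Gaussian references has finite second moments.

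One correction to your closing remark. A KL bound relative to a fixed finite reference measure \emph{does} prevent escape of mass to infinity. Taking \(f=k\,\mathbf{1}_E\) in the same inequality gives \(k\,\mu(E)\le \mathrm{KL}(\mu\,\|\,\nu)+(e^k-1)\,\nu(E)\), so tightness of \(\nu\) transfers to \(\mu\). This superlinearity mechanism is what the Liero--Mielke--Savar\'e theorem relies on, and it is why the paper needs no tail assumption for existence in \(\mathcal{M}^{+}\). The Gaussian tails are needed only for the second-moment part.
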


\begin{proof}
We establish the existence of a minimizer by casting our formulation into the framework of \cite{Liero_2017} and verifying the conditions of \cite[Theorem 3.3]{Liero_2017}. To this end, first, we verify the basic topological and functional analytic hypotheses:
\begin{enumerate}[label={\textup{(P-\alph*)}}, leftmargin=*, widest=b, align=left]
\item \label{prop:step:1} The underlying space $\Rd$ with its usual norm $\norm{\cdot}$ is a complete, separable metric space, and thus a Hausdorff topological space.
\item \label{prop:step:2} The reference Gaussian measures $\alpha$ and $\beta$ are finite, non-negative Radon measures; see \eqref{eq:reference-measures}.
\item \label{prop:step:3} The quadratic transport cost $\Rd \times \Rd \ni (x_1,x_2) \mapsto \ell(x_1, x_2) \Let \norm{x_2 - x_1}^2$ is non-negative, proper, and lower semi-continuous. 
\end{enumerate}
\vspace{1mm}
Second, we check that the problem is strictly feasible. The KL divergence penalty corresponds to the logarithmic entropy function $s \mapsto F(s) \Let \gamma(s \log s - s + 1)$; because $F(0) = \gamma < +\infty$, the objective functional evaluated at the null transport plan $\pi = 0$ yields a finite cost \(J_1(0) = \gamma\alpha(\Rd) + \gamma \beta(\Rd) < +\infty,\) which indicates that the minimum is strictly feasible.

Define recession constant \cite[\S 2.1]{Sejourne2023} of \(F(\cdot)\) by $F'_{\infty} \Let \lim_{s \to +\infty} \frac{F(s)}{s}$, and notice that 
\begin{equation}
F'_{\infty} = \lim_{s \to +\infty} \frac{\gamma(s \log s - s + 1)}{s} = +\infty. \nonumber
\end{equation}
Thus, existence of a minimizer follows immediately from \cite[Theorem 3.3]{Liero_2017}.
\end{proof}


\subsection{Restriction to Gaussian marginals}
In this subsection, we develop the theoretical foundation for solving Problem \ref{prob:uot} via a finite-dimensional surrogate. The key ingredients are the following two propositions. Proposition \ref{prop:transport-lb} treats the quadratic transport term under fixed marginal moments, while Proposition \ref{prop:gauss-min-kl} identifies the Gaussian measure as the minimizer of the KL divergence to a Gaussian reference under fixed mass, mean, and covariance.

\begin{proposition}[Transport cost lower bound]\label{prop:transport-lb}
Consider Problem \ref{prob:uot} with its associated data and notations. Let $\pi_1,\pi_2\in\calM_2^{+}(\Rd)$  with the same mass $c>0$. For \(i \in \aset[]{1,2}\), let the mean and covariance of  $\bar \pi_i$ be $m_i \in \Rd$ and $\Sigma_i \succ \textup{O}$, respectively. Let \(\calC\) be a class of couplings, defined by
\begin{align*}
 \calC \Let \left\{ \pi \;\middle\vert\;  
\begin{array}{@{}l@{}}
\int_{\Rd \times \Rd} \rmd\pi =c,\,\, \bar \pi_{i}\mbox{ has mean }m_i, \mbox{covariance }\Sigma_{i}
\end{array}
\right\}.
\end{align*}
for each \(i \in \{1,2\}\). Then we have the following lower bound
\begin{equation}
    \begin{aligned}
        &\inf_{\pi\in\calC} \int_{\Rd}\int_{\Rd} \|x_2-x_1\|^2 \,\rmd \pi(x_1,x_2) \ge c\Big(\|m_2-m_1\|^2 +\tr\left(\Sigma_1+\Sigma_2-2(\Sigma_1^{1/2}\Sigma_2\Sigma_1^{1/2})^{1/2}\right)\Big). \nn
    \end{aligned}
\end{equation}
Moreover, if $\pi_1$ and $\pi_2$ are Gaussian measures, then equality holds, and an optimal coupling induced by the affine map
\begin{equation}\label{eq:uot-affine-map}
    \begin{aligned}
        &x_2 = T x_1 + \tau,\quad T \Let \Sigma_1^{-1/2}(\Sigma_1^{1/2}\Sigma_2\Sigma_1^{1/2})^{1/2}\Sigma_1^{-1/2},\,\,\text{and }\,\tau \Let m_2 - Tm_1.
    \end{aligned}
\end{equation}
\end{proposition}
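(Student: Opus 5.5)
The plan is to reduce to the balanced Gaussian Wasserstein problem by normalization. For any \(\pi\in\calC\), set \(\bar\pi\Let\pi/c\). This is a probability coupling of \(\bar\pi_1\) and \(\bar\pi_2\), and
\[
\int\!\!\int\|x_2-x_1\|^2\,\rmd\pi = c\int\!\!\int\|x_2-x_1\|^2\,\rmd\bar\pi .
\]
It therefore suffices to show that for every probability coupling \(\bar\pi\) with marginal means \(m_i\) and covariances \(\Sigma_i\),
\[
\mathbb{E}\|X_2-X_1\|^2\ \ge\ \|m_2-m_1\|^2+\tr\bigl(\Sigma_1+\Sigma_2-2(\Sigma_1^{1/2}\Sigma_2\Sigma_1^{1/2})^{1/2}\bigr).
\]

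First I would center the variables. With \(Y_i=X_i-m_i\),
\[
\mathbb{E}\|X_2-X_1\|^2=\|m_2-m_1\|^2+\tr\Sigma_1+\tr\Sigma_2-2\tr C,
\]
where \(C\Let\mathbb{E}[Y_1Y_2^\top]\) is the cross-covariance. The cross terms involving the means vanish because the \(Y_i\) have zero mean. The joint covariance \(\begin{pmatrix}\Sigma_1 & C\\ C^\top&\Sigma_2\end{pmatrix}\) is positive semidefinite. Since \(\Sigma_1\succ\textup{O}\), the Schur complement gives \(\Sigma_2-C^\top\Sigma_1^{-1}C\succeq\textup{O}\). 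The problem thus reduces to the matrix inequality
\[
\max\bigl\{\tr C:\ C^\top\Sigma_1^{-1}C\preceq\Sigma_2\bigr\}\le\tr\bigl(\Sigma_1^{1/2}\Sigma_2\Sigma_1^{1/2}\bigr)^{1/2}.
\]

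This matrix inequality is the main step. I would write \(C=\Sigma_1^{1/2}K\), where \(K\) satisfies \(K^\top K\preceq\Sigma_2\). Then
\[
\tr C=\tr\bigl(\Sigma_1^{1/2}K\bigr)=\tr\bigl(K\Sigma_1^{1/2}\bigr).
\]
Take the polar decomposition \(K=Q(K^\top K)^{1/2}\) with \(Q\) orthogonal, and set \(M\Let\Sigma_1^{1/2}\Sigma_2\Sigma_1^{1/2}\). Write \(\Sigma_1^{1/2}\Sigma_2^{1/2}=M^{1/2}V\) with \(V\) orthogonal (polar form). Using the trace duality bound \(\tr(AB)\le\|A\|_{\mathrm{op}}\|B\|_{*}\), it then suffices to show \(\|K\Sigma_1^{1/2}\|_*\le\|\Sigma_2^{1/2}\Sigma_1^{1/2}\|_*\). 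The right-hand side equals \(\tr M^{1/2}\).

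The nuclear norm inequality follows from \(K^\top K\preceq\Sigma_2\). This gives
\[
\Sigma_1^{1/2}K^\top K\Sigma_1^{1/2}\preceq\Sigma_1^{1/2}\Sigma_2\Sigma_1^{1/2}=M .
\]
The square root is operator monotone, so
\[
\bigl(\Sigma_1^{1/2}K^\top K\Sigma_1^{1/2}\bigr)^{1/2}\preceq M^{1/2}.
\]
Taking traces gives \(\|K\Sigma_1^{1/2}\|_*\le\tr M^{1/2}\), which is the desired bound. Alternatively, I could cite the well-known Gelbrich bound directly.

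For the equality case, I would take \(\pi\Let(\mathrm{id},T\cdot+\tau)_\#\pi_1\) with \(T\) as in \eqref{eq:uot-affine-map}. Since \(T\) is symmetric positive definite,
\[
T\Sigma_1T=\Sigma_1^{-1/2}M\Sigma_1^{-1/2}=\Sigma_2 .
\]
Hence the pushforward of the Gaussian \(\pi_1\) is \(c\,\calN(Tm_1+\tau,\Sigma_2)=\pi_2\). The coupling therefore has the correct marginals, and in particular lies in \(\calC\). Its cross-covariance is \(C=\Sigma_1T=\Sigma_1^{1/2}M^{1/2}\Sigma_1^{-1/2}\), so \(\tr C=\tr M^{1/2}\) and the bound is attained. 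The only delicate point I anticipate is the trace/operator-monotonicity step above; everything else is routine algebra.
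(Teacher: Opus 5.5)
Your proof is correct, and its skeleton is the paper's: normalize by $c$ to reduce to a probability coupling, apply the Gelbrich lower bound, then exhibit the affine coupling for equality. The difference is that the paper cites three results, whereas you prove each directly.
\begin{itemize}
\item The paper cites Gelbrich's Theorem~2.1 for the inequality; you prove it. The Schur complement reduces the bound to $\max\{\tr C : C^\top\Sigma_1^{-1}C\preceq\Sigma_2\}$, which you control by $\tr X\le\|X\|_*$, a congruence, and monotonicity of the square root. Monotonicity of eigenvalues under the Loewner order already suffices for the trace.
\item The paper cites Clark's Proposition~7 for attainment with Gaussian marginals and McCann's Example~1.7 for optimality of the affine map. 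You instead verify $T\Sigma_1T=\Sigma_2$ and $\tr(\Sigma_1T)=\tr M^{1/2}$ explicitly.
\end{itemize}
Your route buys self-containment. Optimality of the affine map follows at once from ``lower bound plus attainment,'' so no Brenier--McCann argument is needed. Your moment computation for $(\mathrm{id},T\cdot+\tau)_\#\pi_1$ also never uses Gaussianity. Hence the infimum over $\calC$ equals the bound in general, and Gaussianity is needed only to make the second marginal exactly $\pi_2$, which is what Theorem~\ref{thm:gaussian-optimality} actually uses. The paper's version is shorter but defers all substance to the literature. The polar decompositions of $K$ and $\Sigma_1^{1/2}\Sigma_2^{1/2}$ that you introduce are never used and can be dropped.
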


\begin{proof}
Let \(\pi \in \calC\) be arbitrary, and define the normalized plan \(\bar{\pi} \Let \tfrac{1}{c}\pi\). Then \(\bar{\pi}\) is a probability measure on \(\Rd \times \Rd\), and its marginals \(\bar{\pi}_1\) and \(\bar{\pi}_2\) have means \(m_1,m_2\) and covariances \(\Sigma_1,\Sigma_2\), respectively. Thus \(\bar{\pi}\) belongs to the class of couplings
\begin{align*}
\bar{\calC} \Let \aset[\Big]{\bar{\pi} \suchthat \int_{\Rd \times \Rd}\hspace{-6mm}\rmd \bar{\pi} = 1,\;
\bar{\pi}_i \text{ has mean } m_i \text{ and covariance } \Sigma_i},
\end{align*}
where \(i\in \{1,2\}\). Therefore, by Theorem 2.1 of \cite{Gelbrich1990},
\begin{equation}
    \begin{aligned}
        & \int_{\Rd}\int_{\Rd} \|x_2-x_1\|^2 \,\rmd \bar{\pi}(x_1,x_2) 
        \ge \|m_2-m_1\|^2 +\tr\left(\Sigma_1+\Sigma_2-2(\Sigma_1^{1/2}\Sigma_2\Sigma_1^{1/2})^{1/2}\right). \nn 
    \end{aligned}
\end{equation}
Multiplying both sides by \(c\), and using \(\pi = c\bar{\pi}\), we obtain
\begin{equation*}
    \begin{aligned}
        &\quad\int_{\Rd}\int_{\Rd} \|x_2-x_1\|^2 \,\rmd \pi(x_1,x_2)  = c \int_{\Rd}\int_{\Rd} \|x_2-x_1\|^2 \,\rmd \bar{\pi}(x_1,x_2) \geq  \\
        & c\Bigl(\|m_2-m_1\|^2 +\tr\left(\Sigma_1+\Sigma_2-2(\Sigma_1^{1/2}\Sigma_2\Sigma_1^{1/2})^{1/2}\right)\Bigr).
    \end{aligned}
\end{equation*}
Since \(\pi \in \mathcal{C}\) was arbitrary, the desired lower bound follows.

Now suppose in addition that \(\pi_1\) and \(\pi_2\) are Gaussian measures. Then \(\bar{\pi}_1 \Let \calN(m_1,\Sigma_1)\) and \(\bar{\pi}_2 \Let \calN(m_2,\Sigma_2).\) By \cite[Proposition 7]{Clark1984}, the lower bound derived above is attained for Gaussian marginals. The optimality of the deterministic affine mapping then immediately follow from \cite[Example 1.7]{McCan1997}.
\end{proof}


\begin{proposition}[Gaussian minimizes KL]
\label{prop:gauss-min-kl}
Fix $c,c'>0$, $m,m'\in\Rd$, and $\Sigma,\Sigma'\in\mathbb{R}^{d \times d}$ with $\Sigma, \Sigma'\succ \textup{O}$. Let \(\pi\) be a finite nonnegative measure of mass \(c\) such that \(\bar\pi\) has mean \(m\) and covariance \(\Sigma\). Then we have the inequality
\begin{align}
    \KL{\pi}{c'\calN(m',\Sigma')}\geq\KL{c\calN(m,\Sigma)}{c'\calN(m',\Sigma')}. \nn
\end{align}
\end{proposition}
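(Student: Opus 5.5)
We will reduce the unbalanced KL divergence to a relative-entropy statement between normalized measures, and then use the classical fact that the Gaussian maximizes entropy under a fixed covariance.

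\textbf{Reduction to the normalized problem.} If \(\pi\) is not absolutely continuous with respect to \(\nu\Let c'\calN(m',\Sigma')\), the left-hand side is \(+\infty\) and there is nothing to prove, so assume \(\pi\ll\nu\). Then \(\pi\) has a Lebesgue density \(c\,p\), where \(p\) is the density of \(\bar\pi\). Write \(q\) for the density of \(\calN(m',\Sigma')\). Directly from the definition of the unbalanced KL divergence,
\begin{equation*}
\KL{\pi}{\nu} = c\int p\log\frac{p}{q}\,\rmd x + c\log\frac{c}{c'} - c' + c .
\end{equation*}
The last three terms depend only on \((c,c')\). It therefore suffices to show that \(\int p\log(p/q)\) is minimized, over probability densities with mean \(m\) and covariance \(\Sigma\), by \(p=g\Let\calN(\cdot\,;m,\Sigma)\).

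\textbf{Key step.} Split the integral as \(\int p\log(p/q)=-\ent{p}-\int p\log q\). Since \(\log q(x)\) is a quadratic polynomial in \(x\), \(\int p\log q\) depends only on the first two moments of \(p\). Hence \(\int p\log q=\int g\log q\). For the entropy term we use the standard identity
\begin{equation*}
\int p\log\frac{p}{g}\,\rmd x = -\ent{p}-\int p\log g\,\rmd x = -\ent{p}+\ent{g}.
\end{equation*}
This identity holds again because \(\log g\) is quadratic, so \(\int p\log g=\int g\log g\). By Gibbs' inequality (nonnegativity of the probability KL divergence, obtained from Jensen's inequality), the left side is nonnegative, so \(\ent{p}\le\ent{g}\). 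Combining the two facts gives \(\int p\log(p/q)\ge\int g\log(g/q)\). Multiplying by \(c\) and adding back the mass terms yields the claim.

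\textbf{Main obstacle.} The only delicate point is integrability when splitting \(\int p\log(p/q)\) into \(-\ent{p}-\int p\log q\). The entropy of \(p\) could a priori be \(-\infty\), or the positive and negative parts of \(p\log p\) could fail to be separately integrable. We handle this as follows. The term \(\int p\log q\) is finite because \(p\) has finite second moments. If \(\int p\log(p/q)=+\infty\), the inequality is trivial. Otherwise \(\int p\log p=\int p\log(p/q)+\int p\log q\) is finite, and the manipulations above are legitimate. Equivalently, and more cleanly, we can write \(\int p\log(p/q)=\int p\log(p/g)+\int p\log(g/q)\). The second integral is the integral of a quadratic, equal to \(\int g\log(g/q)\), and the first is nonnegative. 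This version avoids entropies altogether and is the form we will present.
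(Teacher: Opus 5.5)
Your proof is correct and follows essentially the same route as the paper. You reduce to the normalized divergence via $\mathrm{KL}(\pi\,\|\,c'\mathcal{N}(m',\Sigma')) = c\int p\log(p/q)\,dx + c\log(c/c') - c + c'$, use that the cross term against a Gaussian log-density depends only on the first two moments, and conclude from Gaussian entropy maximality under a covariance constraint. The only difference is that you inline the Gibbs-inequality argument behind that maximality, which the paper simply cites from Cover--Thomas, through the split $\int p\log(p/q)=\int p\log(p/g)+\int p\log(g/q)$. This also sidesteps the integrability issue in writing $-H(p)$, which the paper's proof passes over.
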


\begin{proof}
If \(\pi\) is not absolutely continuous with respect to \(c'\calN(m',\Sigma')\), then by the definition of the KL divergence, \(\KL{\pi}{c'\calN(m',\Sigma')} = +\infty,\) and the claim is immediate. Hence, it remains to consider the case when \(\pi\) is absolutely continuous with respect to \(c'\calN(m',\Sigma')\). Since \(c'>0\), this is equivalent to \(\bar{\pi}\) being absolutely continous with respect to \(\calN(m',\Sigma')\), so \(\bar{\pi}\) admits a Lebesgue density, say \(p\). We write
\begin{align}
\pi = c\,p(x)\rmd x
\quad \text{and} \quad c'\calN(m',\Sigma') = c'\,q(x)\rmd x,\nn
\end{align}
where \(q(x) \Let \calN(x;m',\Sigma')\); see notations in \S\ref{sec:introduction}. Then, using the expression of KL divergence for absolutely continuous unbalanced measures \cite{Sejourne2023} 
\begin{align}
\KL{\pi}{c'\calN(m',\Sigma')} &=
c \int_{\Rd} \log\left(\frac{p(x)}{q(x)}\right)p(x)\rmd x
+  c\log\left(\frac{c}{c'}\right) - c + c'.\nn
\end{align}
Which is \(\KL{\pi}{c'\calN(m',\Sigma')} = c\KL{\bar{\pi}}{\calN(m',\Sigma')}
+ c\log\left(\frac{c}{c'}\right) - c + c'.\)
We also have 
\begin{align}
& \KL{\bar{\pi}}{\calN(m',\Sigma')} = \int_{\Rd} p(x)\log\left(\frac{p(x)}{\calN(x;m',\Sigma')}\right)\rmd x \nn \\ & \hspace{5mm} = -H(p) - \int_{\Rd} p(x)\log \calN(x;m',\Sigma')\rmd x.\nn
\end{align}
Since, \(\log \calN(x;m',\Sigma') = -\tfrac{d}{2}\log(2\pi)-\tfrac{1}{2}\log\det\Sigma'-\tfrac{1}{2}(x-m')^\top(\Sigma')^{-1}(x-m'),\) the second term depends only on the mean and covariance of \(\bar{\pi}\), which are fixed to be \(m\) and \(\Sigma\). Hence, we arrive at 
\begin{align}
\KL{\pi}{c'\calN(m',\Sigma')} = -c\,H(p) + \mathrm{const.}, \nn
\end{align}
where \(\mathrm{const.}\) depends only on the fixed quantities \(c,c',m,m',\Sigma,\Sigma'\). From \cite[Theorem 8.6.5]{Cover2006}, it holds that \(H(p)\leq H(\calN(m,\Sigma))\) and consequently, \(\KL{\pi}{c'\calN(m',\Sigma')}
\ge \KL{c\calN(m,\Sigma)}{c'\calN(m',\Sigma')}.\) Our proof is complete.
\end{proof}

Leveraging Propositions \ref{prop:transport-lb} and \ref{prop:gauss-min-kl}, we now establish a central structural result for Problem \ref{prob:uot}. Even though the reference measures \(\alpha\) and \(\beta\) are Gaussian, it is not immediate that an optimizer of \eqref{eq:uot}, which is originally sought over a broad class of transport plans, should itself be Gaussian. The following theorem shows that, without loss of optimality, one may restrict attention to Gaussian transport plans.


\begin{theorem}[Optimality of Gaussian]\label{thm:gaussian-optimality}
Consider Problem \ref{prob:uot} with its associated data and notations and let \(\pi \in  \calM_2^{+}(\Rd \times \Rd)\) be a feasible solution to \eqref{eq:uot}.  Let $\pi_1,\pi_2\in\calM_2^{+}(\Rd)$ with the same mass $c>0$ and for \(i\in\{1,2\}\), let \(m_i\) and \(\Sigma_i\) denote the mean and covariance of the normalized marginal \(\bar{\pi}_i\), respectively. For \(i=1,2\), define the Gaussian measures \(\pi_i^G \Let c\,\calN(m_i,\Sigma_i)\) and let \(\pi^G\) be the coupling between \(\pi_1^G\) and \(\pi_2^G\) induced by the affine map in Proposition~\ref{prop:transport-lb}. Then \(J_1\bigl(\pi^G\bigr) \le J_1(\pi).\) Moreover, equality holds if \(\pi_i=c\,\calN(m_i,\Sigma_i)\) for \(i=1,2\) and \(\pi\) is the affine Gaussian coupling between these marginals.
\end{theorem}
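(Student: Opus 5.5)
The plan is to split the objective $J_1(\pi)$ into its three terms and show that passing from $\pi$ to $\pi^G$ does not increase any of them. Let $\pi$ be feasible, so $J_1(\pi)<+\infty$. Its marginals $\pi_1,\pi_2$ have the common mass $c=\pi(\Rd\times\Rd)>0$. Since $\pi\in\calM_2^+(\Rd\times\Rd)$, both marginals have finite second moments, so $m_i$ and $\Sigma_i$ are well defined. By the absolute-continuity observation after Problem~\ref{prob:uot}, finiteness of the KL terms forces $\pi_i$ to have Lebesgue densities. A measure with a density cannot be concentrated on a proper affine subspace, so $\Sigma_i\succ \textup{O}$. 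This positivity is exactly what Propositions~\ref{prop:transport-lb} and \ref{prop:gauss-min-kl} require, and it also makes $T$ in \eqref{eq:uot-affine-map} well defined.

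Next I would verify that $\pi^G$ has the intended marginals. Define $\pi^G \Let (\mathrm{id},T)_\#\pi_1^G$. Its first marginal is $\pi_1^G$ by construction. Its second marginal is $c\,\calN(Tm_1+\tau,\,T\Sigma_1T^\top)$, and a direct computation using the symmetry of $T$ gives $T\Sigma_1T^\top=\Sigma_2$ and $Tm_1+\tau=m_2$. Hence $\pi^G\in\calC$, with the same masses, means and covariances as $\pi$. Since $\pi$ itself also lies in $\calC$, Proposition~\ref{prop:transport-lb} gives
\[
\int\!\!\int\|x_2-x_1\|^2\,\rmd\pi \;\ge\; c\Bigl(\|m_2-m_1\|^2+\tr\bigl(\Sigma_1+\Sigma_2-2(\Sigma_1^{1/2}\Sigma_2\Sigma_1^{1/2})^{1/2}\bigr)\Bigr)=\int\!\!\int\|x_2-x_1\|^2\,\rmd\pi^G,
\]
where the equality is the Gaussian-attainment part of that proposition.

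For the penalty terms, I would apply Proposition~\ref{prop:gauss-min-kl} twice. With $(c',m',\Sigma')=(c_\alpha,m_\alpha,\Sigma_\alpha)$ it gives $\KL{\pi_1}{\alpha}\ge\KL{\pi_1^G}{\alpha}$, and with $(c',m',\Sigma')=(c_\beta,m_\beta,\Sigma_\beta)$ it gives $\KL{\pi_2}{\beta}\ge\KL{\pi_2^G}{\beta}$. Multiplying by $\gamma>0$ and adding these to the transport inequality yields $J_1(\pi^G)\le J_1(\pi)$. Note also that $\pi^G\in\calM_2^+$, since its marginals are Gaussian.

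The equality clause is immediate: if $\pi_i=c\,\calN(m_i,\Sigma_i)$ and $\pi$ is the affine Gaussian coupling, then $\pi=\pi^G$. The only delicate step is justifying $\Sigma_i\succ\textup{O}$ from finiteness of the KL terms, together with the small check that the pushforward has second marginal $\pi_2^G$. The degenerate case $c=0$ is excluded by hypothesis; otherwise the null plan would need to be handled separately.
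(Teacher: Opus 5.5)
Your proposal is correct and follows essentially the same route as the paper: you bound the transport term by Proposition~\ref{prop:transport-lb}, whose bound is attained by the affine Gaussian coupling, bound each KL term by Proposition~\ref{prop:gauss-min-kl}, and add the inequalities. Your extra checks supply details the paper leaves implicit: that finiteness of the KL terms forces $\Sigma_i\succ\textup{O}$ (so $T$ is well defined), and that $(\mathrm{id},T)_\#\pi_1^G$ really has second marginal $\pi_2^G$. Your remark that $\pi=\pi^G$ in the equality case also justifies the final clause more cleanly.
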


\begin{proof}
Recall the expression of the objective function \(J_1(\cdot)\) in Problem \ref{prob:uot}. For convenience, we also define \(\mathsf{A} \Let \bigl(\|m_2-m_1\|^2+\tr\Sigma_1+\tr\Sigma_2- 2\tr\bigl((\Sigma_1^{1/2}\Sigma_2\Sigma_1^{1/2})^{1/2}\bigr)\bigr)\). We have the following chain of inequalities
   \begin{equation*}
        \begin{aligned}
            J_1(\pi) &\ge c\,\mathsf{A} + \gamma\KL{\pi_1}{\alpha}+\gamma\KL{\pi_2}{\beta}\ge c\,\mathsf{A} + \gamma\KL{\pi_{1}^{\rm G}}{\alpha}+\gamma\KL{\pi_{2}^{\rm G}}{\beta}\\
            &=\int_{\Rd}\int_{\Rd} \|x_2-x_1\|^2\,\rmd\pi^{\rm G} + \gamma\KL{\pi_{1}^{\rm G}}{\alpha}  +\gamma\KL{\pi_{2}^{\rm G}}{\beta} = J_1(\pi^{\rm G}).
        \end{aligned}
    \end{equation*}
We invoked Proposition~\ref{prop:transport-lb} for the first inequality and Proposition~\ref{prop:gauss-min-kl} for the second one.
Referring to Proposition \ref{prop:transport-lb} and \ref{prop:gauss-min-kl}, the equality holds throughout if $\pi_{i} = c\,\mathcal N(m_i,\Sigma_i)$ for $i=1,2$ and the coupling is the affine Gaussian one, i.e.\ the equality cases of Proposition \ref{prop:transport-lb} and \ref{prop:gauss-min-kl} hold simultaneously.
\end{proof}

\subsection{Optimization-based method for solving UOT}
Originally, Problem \ref{prob:uot} is formulated as a variational problem over transport plans \(\pi \in \calM_2^{+}(\Rd \times \Rd)\). As a result of Theorem~\ref{thm:gaussian-optimality}, Problem \ref{prob:uot} can be, without loss of optimality, reduced to a finite dimensional optimization. For $c,c'>0$, $m,m_1,m_2\in\Rd$, and $\Sigma,\Sigma_1,\Sigma_2\in\mathbb{R}^{d \times d}$ with $\Sigma,\Sigma_1, \Sigma_2\succ \textup{O}$, substituting \(\pi = c\ \calN(m, \Sigma)\) with corresponding marginal \(\pi_1 = c\ \calN(m_1, \Sigma_1)\) and \(\pi_2 = c\ \calN(m_2, \Sigma_2)\) to Problem \ref{prob:uot}, we obtain a finite dimensional optimization over \(c, m , \Sigma\), but since the optimal transport plan between \(\pi_1\) and \(\pi_2\) are induced by \eqref{eq:uot-affine-map}, the optimization problem is further reduced to the one over the tuple \(c, m_1, \Sigma_1, m_2, \Sigma_2\):
\begin{equation}\label{eq:finite-dim-uot}
    \begin{aligned}
        \min_{c,m_1,\Sigma_1,m_2,\Sigma_2} \quad c\,M(m_1,m_2) + c\,C(\Sigma_1,\Sigma_2) + \psi(c),
    \end{aligned}
\end{equation}
where various quantities in \eqref{eq:finite-dim-uot} are
\begin{align}\label{eq:uot-def-of-m-c-psi}
    &M(m_1,m_2) \Let \|m_2-m_1\|^2  \hspace{-1mm} + \hspace{-1mm}\frac{\gamma}{2}(m_1-m_\alpha)^\top\Sigma_\alpha^{-1}(m_1-m_\alpha)  \nonumber\\
    &\qquad\qquad\qquad +\frac{\gamma}{2}(m_2-m_\beta )^\top\Sigma_\beta ^{-1}(m_2-m_\beta )   \nonumber\\
    &C(\Sigma_1,\Sigma_2) \Let -2\tr\left(\sqrt{\Sigma_1^{1/2}\Sigma_2\Sigma_1^{1/2}}\right)  + \frac{\gamma}{2}\tr(\Sigma_\beta^{-1}\Sigma_{2}) -\frac{\gamma}{2}\log\det\Sigma_{2}+\tr\Sigma_{2}  \nonumber\\
    &\qquad\qquad\quad + \frac{\gamma}{2}\tr(\Sigma_\alpha^{-1}\Sigma_1)-\frac{\gamma}{2}\log\det\Sigma_1+\tr\Sigma_{1} \nonumber\\
    &\psi(c) \Let \frac{c\gamma}{2}(\log\det\Sigma_{\alpha} + \log\det\Sigma_{\beta} -2d)  + \gamma\phi_{c_\alpha}(c) + \gamma\phi_{c_\beta}(c) \\
    &\phi_{c'}(c)\Let c\log\!\frac{c}{c'}-c+c'.
\end{align}
Note that all functions defined in~\eqref{eq:uot-def-of-m-c-psi} are convex with respect to their arguments. However, since we have the multiplication between decision variables, ~\eqref{eq:finite-dim-uot} is not jointly convex. To this end, we proceed in the following manner; first, it can be seen that \eqref{eq:finite-dim-uot} is equivalent to
\begin{equation*}
    \begin{aligned}
        \min_{c}\min_{m_1,\Sigma_1,m_2,\Sigma_2} \quad c\,M(m_1,m_2) + c\,C(\Sigma_1,\Sigma_2) + \psi(c),
    \end{aligned}
\end{equation*}
which is also equivalent to the problem
\begin{equation}\label{eq:uot-reformulation}
    \begin{aligned}
        \hspace{-2mm}\min_{c}  \, c\Bigl(\,\min_{m_1,\Sigma_1,m_2,\Sigma_2}M(m_1,m_2) + C(\Sigma_1,\Sigma_2)\Bigr) + \psi(c).
    \end{aligned}
\end{equation}
Therefore, without loss of optimality, we can first perform the optimization with respect to \(m_1,\Sigma_1,m_2,\Sigma_2\), and perform the one with respect to \(c\) using the optimal value of the first subproblem. More precisely, denote by \(p^*\) the optimal value of the following subproblem:
\begin{equation}\label{eq:uot-subproblem}
    \begin{aligned}
        \min_{m_1,\Sigma_1,m_2,\Sigma_2}\,M(m_1,m_2) + C(\Sigma_1,\Sigma_2).
    \end{aligned}
\end{equation}
Then, \eqref{eq:uot-reformulation} can be written as 
\begin{equation}\label{eq:uot-c-optimization-prob}
    \begin{aligned}
        \min_{c} \,\, cp^* +\psi(c),
    \end{aligned}
\end{equation}
which is a scalar optimization problem, and we have the immediate consequences.
\begin{corollary}[Optimal mass for UOT problem]\label{cor:uot-optimal-c}
    Let \(p^*\) denote the optimal value of \eqref{eq:uot-subproblem}. Let \(L\Let\log\det \Sigma_\alpha+\log\det \Sigma_\beta-2d\), them the optimal mass for Problem \ref{prob:uot} is 
    \begin{align}\label{eq:cstar_uot}
        c^\ast \Let \sqrt{c_\alpha c_\beta}\exp\!\left(-\frac{p^*}{2\gamma}-\frac{L}{4}\right).
    \end{align}
\end{corollary}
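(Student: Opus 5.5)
The plan is to minimize the scalar function $g(c) \Let c\,p^* + \psi(c)$ over $c>0$ directly; this is \eqref{eq:uot-c-optimization-prob}. By the reduction leading to \eqref{eq:uot-reformulation}, it is equivalent to Problem~\ref{prob:uot}, through Theorem~\ref{thm:gaussian-optimality}. First I would write $g$ out explicitly using \eqref{eq:uot-def-of-m-c-psi}:
\begin{equation*}
g(c) = c\,p^* + \frac{c\gamma}{2}L + \gamma\Bigl(c\log\frac{c}{c_\alpha} - c + c_\alpha\Bigr) + \gamma\Bigl(c\log\frac{c}{c_\beta} - c + c_\beta\Bigr).
\end{equation*}
This expression requires $p^*$ to be finite and attained, which I would check separately (see below).

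Next I would establish strict convexity of $g$ on $(0,\infty)$. The terms $c\,p^*$ and $\tfrac{c\gamma}{2}L$ are linear. Each $\phi_{c'}$ has second derivative $1/c>0$, so $g''(c) = 2\gamma/c > 0$. In addition $g'(c)\to-\infty$ as $c\downarrow 0$, since $g'$ contains $2\gamma\log c$, and $g'(c)\to+\infty$ as $c\to\infty$. Hence $g$ has a unique stationary point, and it is the global minimizer. It also beats the boundary value $c=0$, because $g'<0$ near $0$. Setting $g'(c)=0$ gives
\begin{equation*}
p^* + \frac{\gamma L}{2} + \gamma\log\frac{c}{c_\alpha} + \gamma\log\frac{c}{c_\beta} = 0.
\end{equation*}
Rearranging, $2\log c = \log(c_\alpha c_\beta) - p^*/\gamma - L/2$. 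Exponentiating and taking the square root gives exactly \eqref{eq:cstar_uot}.

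The step I expect to be the main obstacle is justifying that $p^*$ is a finite, attained real number, so that the separation in \eqref{eq:uot-reformulation} is legitimate. I would argue coercivity of $M + C$, using $\tr\sqrt{\Sigma_1^{1/2}\Sigma_2\Sigma_1^{1/2}} \le \tfrac12(\tr\Sigma_1+\tr\Sigma_2)$. This follows from the Bures–Wasserstein term being nonnegative. With it, $C$ is bounded below by $\frac{\gamma}{2}[\tr(\Sigma_\alpha^{-1}\Sigma_1)-\log\det\Sigma_1]$ plus the analogous $\Sigma_2$ term. This lower bound blows up as any eigenvalue tends to $0$ or $\infty$. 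The function $M$ is a positive definite quadratic in $(m_1,m_2)$ after translation. Hence a minimizer exists with $\Sigma_i\succ\textup{O}$, and $p^*\in\mathbb{R}$.

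Finally, I would note that the inner minimizers do not depend on $c$. Therefore the optimal plan of Problem~\ref{prob:uot} is $c^*$ times the affine Gaussian coupling built from those minimizers, and $c^*$ is the optimal mass.
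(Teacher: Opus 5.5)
Your proposal is correct and follows the paper's route. You reduce to the scalar problem $\min_{c>0}\, c\,p^*+\psi(c)$, use strict convexity ($g''(c)=2\gamma/c>0$) to make the unique stationary point the global minimizer, and solve $g'(c)=0$, which indeed yields \eqref{eq:cstar_uot}. The extra checks you add are details the paper leaves implicit, not a different argument: finiteness and attainment of $p^*$ via the Bures--Wasserstein lower bound, and that $c=0$ is not optimal because $g'\to-\infty$ as $c\downarrow 0$.
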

\begin{proof}

Since \eqref{eq:uot-c-optimization-prob} is a scalar strictly convex optimization problem on \((0,+\infty)\), any stationary point is necessarily the unique global minimizer. A direct differentiation yields the results, and we omit the elementary calculation.
\end{proof}

In light of the above discussion, we present Algorithm~\ref{alg:UOT}, which computes a global optimum of Problem~\ref{prob:uot}.
\begin{algorithm2e}[t]
\caption{Unbalanced optimal transport between Gaussian measures}
\label{alg:UOT}
\DontPrintSemicolon
\KwIn{
Gaussian measures
$\alpha = c_\alpha \mathcal N(m_\alpha,\Sigma_\alpha)$,
$\beta = c_\beta \mathcal N(m_\beta,\Sigma_\beta)$,
and penalty parameter $\gamma>0$.
}
\KwOut{A globally optimal transport plan $\pi^\ast$ for Problem~\ref{prob:uot}.}

\textbf{Step 1:} Solve \eqref{eq:uot-subproblem} and find \((m_1^\ast,\Sigma_1^\ast,m_2^\ast,\Sigma_2^\ast)\) and let \(p^\ast \Let M(m_1^\ast,m_2^\ast)+C(\Sigma_1^\ast,\Sigma_2^\ast).\)

\textbf{Step 2:} Set \(L\Let \log\det\Sigma_\alpha+\log\det\Sigma_\beta-2d\)
and obtain the  optimal mass from Corollary \ref{cor:uot-optimal-c}.

\textbf{Step 3:} Construct the optimal Gaussian marginals: \(\pi_1^\ast \Let c^\ast\mathcal N(m_1^\ast,\Sigma_1^\ast)\) and \(\pi_2^\ast \Let c^\ast\mathcal N(m_2^\ast,\Sigma_2^\ast).\)

\textbf{Step 4:} Construct the optimal coupling by setting
\[
T^\ast \Let (\Sigma_1^\ast)^{-1/2}
\Bigl((\Sigma_1^\ast)^{1/2}\Sigma_2^\ast(\Sigma_1^\ast)^{1/2}\Bigr)^{1/2}
(\Sigma_1^\ast)^{-1/2},\] and \(\tau^\ast \Let m_2^\ast-T^\ast m_1^\ast.\)

Construct $\pi^\ast$ as the affine Gaussian coupling between
$\pi_1^\ast$ and $\pi_2^\ast$, using Step 4.

\Return{$\pi^\ast$}\;
\end{algorithm2e}

\begin{theorem}[Optimality of Algorithm~\ref{alg:UOT}]\label{thm:uot-alg-optimal}
    Algorithm~\ref{alg:UOT} computes a globally optimal solution to Problem \ref{prob:uot}.
\end{theorem}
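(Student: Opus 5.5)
The plan is a sandwich argument. I will show that the objective value of the plan $\pi^\ast$ returned by Algorithm~\ref{alg:UOT} equals the optimal value of the finite-dimensional problem \eqref{eq:finite-dim-uot}, and that this optimal value is a lower bound on $J_1(\pi)$ for every feasible $\pi$. Proposition~\ref{prop:uot-existence} guarantees that the infimum in \eqref{eq:uot} is attained. The argument therefore only needs the value comparison, together with a check that Steps 1--4 produce a well-defined feasible plan.

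\textbf{Lower bound.} Take any feasible $\pi$ with $J_1(\pi)<\infty$. If $\pi=0$, then $J_1(0)=\gamma(c_\alpha+c_\beta)$. This is the limit as $c\downarrow 0$ of $c\,p^\ast+\psi(c)$, so it is no smaller than $\min_{c>0}\,c\,p^\ast+\psi(c)$. Otherwise $\pi$ has mass $c>0$. Finiteness of the KL terms forces the marginals to have densities (see the discussion after Problem~\ref{prob:uot}), so $\Sigma_1,\Sigma_2\succ \textup{O}$. Theorem~\ref{thm:gaussian-optimality} then gives $J_1(\pi)\ge J_1(\pi^G)$.

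Next I evaluate $J_1(\pi^G)$ explicitly. The transport part equals $c\,\mathsf A$ by Proposition~\ref{prop:transport-lb}. For the KL parts I use the closed form of the Gaussian KL, scaled as in the proof of Proposition~\ref{prop:gauss-min-kl}. Collecting terms, I check that $J_1(\pi^G)=c\,M(m_1,m_2)+c\,C(\Sigma_1,\Sigma_2)+\psi(c)$. This bookkeeping is the step I expect to need the most care: the $\tfrac{\gamma}{2}\log\det$ terms and the $-\tfrac{\gamma d}{2}$ constants must land exactly in $\psi$, and the $c\log(c/c')-c+c'$ terms must land in $\phi$. With that identity in hand,
\[
J_1(\pi)\ \ge\ c\,p^\ast+\psi(c)\ \ge\ \min_{c>0}\bigl(c\,p^\ast+\psi(c)\bigr),
\]
using that $p^\ast$ is the infimum in \eqref{eq:uot-subproblem} and $c>0$.

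\textbf{Attainment.} I must check that \eqref{eq:uot-subproblem} has a minimizer with $\Sigma_i^\ast\succ \textup{O}$, so that Step 1 is well posed.
\begin{itemize}
\item $M$ is a strictly convex quadratic, so it has a minimizer.
\item For $C$, note that $\tr\bigl((\Sigma_1^{1/2}\Sigma_2\Sigma_1^{1/2})^{1/2}\bigr)\le \tfrac12(\tr\Sigma_1+\tr\Sigma_2)$. Hence $C$ is bounded below by $\tfrac{\gamma}{2}\tr(\Sigma_\alpha^{-1}\Sigma_1)-\tfrac{\gamma}{2}\log\det\Sigma_1$ plus the analogous term in $\Sigma_2$.
\item This lower bound is coercive: it tends to $+\infty$ both as eigenvalues tend to $0$ and as they tend to $\infty$.
\item $C$ is continuous on the open cone of positive definite matrices.
\end{itemize}
These facts give a minimizer in the interior. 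Corollary~\ref{cor:uot-optimal-c} then provides the minimizer $c^\ast>0$ of the scalar problem \eqref{eq:uot-c-optimization-prob}.

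Steps 3--4 build $\pi^\ast$ as the affine Gaussian coupling with mass $c^\ast$ and moments $(m_i^\ast,\Sigma_i^\ast)$. By the equality case of Proposition~\ref{prop:transport-lb} and the same identity as above, $J_1(\pi^\ast)=c^\ast p^\ast+\psi(c^\ast)=\min_{c>0}\bigl(c\,p^\ast+\psi(c)\bigr)$. Combined with the lower bound, $J_1(\pi^\ast)\le J_1(\pi)$ for all feasible $\pi$, so $\pi^\ast$ is globally optimal.
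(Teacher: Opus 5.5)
Your proposal is correct and follows essentially the same route as the paper: Theorem~\ref{thm:gaussian-optimality} reduces Problem~\ref{prob:uot} to \eqref{eq:finite-dim-uot}, which separates into the inner problem \eqref{eq:uot-subproblem} and the scalar mass problem solved in Corollary~\ref{cor:uot-optimal-c}, after which the affine Gaussian coupling built from these minimizers is optimal. Your sandwich formulation also makes explicit three points the paper leaves implicit: the zero plan (not covered by Theorem~\ref{thm:gaussian-optimality}, which assumes $c>0$), the identity $J_1(\pi^{\rm G})=c\,M(m_1,m_2)+c\,C(\Sigma_1,\Sigma_2)+\psi(c)$, and the existence of a positive definite minimizer for \eqref{eq:uot-subproblem}.
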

\begin{proof}
By Theorem \ref{thm:gaussian-optimality}, Problem \ref{prob:uot} is reduced, without loss of optimality, to the finite-dimensional problem \eqref{eq:finite-dim-uot}, which is equivalent to \eqref{eq:uot-reformulation}. Step 1 of Algorithm \ref{alg:UOT} computes a global minimizer \((m_1^\ast,\Sigma_1^\ast,m_2^\ast,\Sigma_2^\ast)\)
of the convex problem \eqref{eq:uot-subproblem}, and let \(p^\ast\) denote its optimal value. Step 2 then computes the unique optimal mass \(c^\ast\) from
Corollary \ref{cor:uot-optimal-c}. Hence, the tuple \((c^\ast,m_1^\ast,\Sigma_1^\ast,m_2^\ast,\Sigma_2^\ast)\) is a global minimizer of \eqref{eq:finite-dim-uot}. Defining \(\pi_1^\ast \Let c^\ast \mathcal N(m_1^\ast,\Sigma_1^\ast)\) and \(\pi_2^\ast \Let c^\ast \mathcal N(m_2^\ast,\Sigma_2^\ast),\) and letting \(\pi^\ast\) be the affine Gaussian coupling between \(\pi_1^\ast\)
and \(\pi_2^\ast\) given by Proposition~\ref{prop:transport-lb}, Theorem~\ref{thm:gaussian-optimality}
implies that \(\pi^\ast\) is a globally optimal solution of Problem~\ref{prob:uot}.
\end{proof}

\subsubsection{Extension to entropic UOT}\label{subsec:entropic-uot}
In this brief subsection, we show that our previous results, can be extended for the well-known entropic UOT (EUOT) problem \cite{Janati2020}. EUOT is posed by the following variational problem:
 \begin{problembox}[\, (EUOT problem)]\label{prob:euot}
Let \(\sigma,\gamma>0\), and \(\alpha,\beta\) are as defined in \eqref{eq:reference-measures}. Find an optimal transport plan \(\pi \in \calM_2^{+}(\Rd \times \Rd)\) that solves: 
\begin{equation}
    \begin{aligned}
        &\inf_{\pi} J_3(\pi) \Let \int_{\Rd}\int_{\Rd} \norm{x_2-x_1}^2 \,\rmd\pi(x,y) + \\
        & \qquad\qquad \sigma \KL{\pi}{\alpha\otimes\beta} + \gamma \KL{\pi_1}{\alpha}
        + \gamma \KL{\pi_2}{\beta}. \nn
    \end{aligned}
\end{equation}     
\end{problembox}
The notation \(\alpha\otimes\beta\) is the standard product measure for given \(\alpha\) and \(\beta\). As in the UOT case, following the same steps as given in Proposition~\ref{prop:uot-existence}, one can show that Problem \ref{prob:euot} admits a minimizer. We therefore omit the details.

Moreover, similar to the UOT case Problem~\ref{prob:euot} can be reduced to a finite-dimensional optimization problem. 
\begin{theorem}\label{thm:euot-gauss-optimality}
Let \(\pi\) be any feasible solution to Problem \ref{prob:euot}. Denote mass of \(\pi\) by \(c\) and the mean and covariance of \(\bar{\pi}\) by
\begin{align}
      &m \Let \bbE_{(x,y)\sim\bar{\pi}}\begin{pmatrix}
            x \\ y
        \end{pmatrix} =\begin{pmatrix}
            m_{1} \\ m_{2}
        \end{pmatrix}, \,\,  \Sigma = \bbE_{(x,y)\sim\bar{\pi}}\left(\begin{pmatrix}
            x \\ y
        \end{pmatrix}-m\right)\left(\begin{pmatrix}
            x \\ y
        \end{pmatrix}-m\right)^\top =\begin{bmatrix}
            \Sigma_{1} & \Sigma_{3} \\ \Sigma_{3}^\top & \Sigma_{2}
        \end{bmatrix}. \nn
    \end{align}
   Then, for a Gaussian transport plan of the form \(\pi^{\rm G}\Let c\,\calN(m,\Sigma)\), it holds that \(J_3(\pi^{\rm G}) \leq J_3(\pi).\)
\end{theorem}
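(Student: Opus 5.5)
The idea is to show that each of the four terms of \(J_3\) is either determined by the moments \((c,m,\Sigma)\) of \(\pi\), or is minimized by the Gaussian with those moments. The Gaussian-KL inequality of Proposition~\ref{prop:gauss-min-kl} (on \(\R^{2d}\) and on \(\R^d\)) does most of the work.

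If \(J_3(\pi)=+\infty\) there is nothing to prove. So we may assume \(J_3(\pi)<\infty\); in particular \(\KL{\pi}{\alpha\otimes\beta}<\infty\), so \(\pi\) has a Lebesgue density on \(\R^{2d}\).

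\textbf{Transport term.} For both \(\pi\) and \(\pi^{\rm G}\),
\[
\int\norm{x_2-x_1}^2\,\rmd\pi = c\bigl(\norm{m_2-m_1}^2+\tr\Sigma_1+\tr\Sigma_2-2\tr\Sigma_3\bigr).
\]
This is the second moment of \(x_2-x_1\) under \(\bar\pi\), which depends only on \(c\), \(m\) and \(\Sigma\). The two transport costs are therefore equal. This is exactly why we must keep the full joint Gaussian \(c\,\calN(m,\Sigma)\) rather than the affine coupling of Proposition~\ref{prop:transport-lb}.

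\textbf{Entropic term.} The product \(\alpha\otimes\beta = c_\alpha c_\beta\,\calN\bigl((m_\alpha,m_\beta),\mathrm{diag}(\Sigma_\alpha,\Sigma_\beta)\bigr)\) is an unbalanced Gaussian on \(\R^{2d}\) with positive definite covariance. If \(\Sigma\succ\textup{O}\), Proposition~\ref{prop:gauss-min-kl} applied in dimension \(2d\) gives \(\KL{\pi}{\alpha\otimes\beta}\ge\KL{\pi^{\rm G}}{\alpha\otimes\beta}\). Its proof is dimension-free: the KL splits as \(-cH(p)\) plus a quantity depending only on the moments, and the maximum-entropy property of Gaussians does the rest.

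\textbf{Marginal terms.} The marginals of \(\pi^{\rm G}\) are \(c\,\calN(m_i,\Sigma_i)\). Applying Proposition~\ref{prop:gauss-min-kl} on \(\R^d\) to \(\pi_1\) against \(\alpha\), and to \(\pi_2\) against \(\beta\), handles the two marginal penalties. Summing the four comparisons gives \(J_3(\pi^{\rm G})\le J_3(\pi)\).

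\textbf{Main obstacle.} The delicate point is verifying \(\Sigma\succ\textup{O}\), which Proposition~\ref{prop:gauss-min-kl} assumes and which is needed for \(\pi^{\rm G}\) to have a density, so that \(\KL{\pi^{\rm G}}{\alpha\otimes\beta}<\infty\). I will argue by contradiction. If \(\Sigma\) were singular, there would be \(v\neq0\) with \(\mathrm{Var}_{\bar\pi}(v^\top z)=0\). Then \(\bar\pi\) would be concentrated on an affine hyperplane, a Lebesgue-null set, contradicting absolute continuity. Finiteness of second moments, needed for the moments to be defined, comes from \(\pi\in\calM_2^+\).

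A minor point: the case \(c=0\) is trivial, since then \(\pi=\pi^{\rm G}=0\).
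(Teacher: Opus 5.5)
Your proof is correct and takes the route the paper intends; the paper omits the proof and says it mirrors Theorem~\ref{thm:gaussian-optimality}. The transport term is fixed by \((c,m,\Sigma)\), and each KL term, including the joint one on \(\R^{2d}\), is handled by Proposition~\ref{prop:gauss-min-kl}. Your argument that finiteness of \(\KL{\pi}{\alpha\otimes\beta}\) forces \(\Sigma\succ\textup{O}\), and hence \(\Sigma_1,\Sigma_2\succ\textup{O}\), supplies a detail the paper leaves implicit.
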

We skip the proof since it follows by applying the same steps given in Theorem \ref{thm:gaussian-optimality}. Exploiting this theorem, we reduce Problem~\ref{prob:euot} to the following finite dimensional optimization problem:
\begin{equation}\label{eq:finite-dim-euot}
    \begin{aligned}
        \min_{c,\Sigma_1, m_1,\Sigma_2,m_2,\Sigma_3} \hspace{-5mm}c\,\bar{M}(m_1,m_2) + c\,\bar{C}(\Sigma_1,\Sigma_2,\Sigma_3) + \bar{\psi}(c),
    \end{aligned}
\end{equation}
and in \eqref{eq:finite-dim-euot}, several terms are given by
\begin{align}
    &\bar{M}(m_1,m_2) \Let \|m_2-m_1\|^2   +\frac{\sigma+\gamma}{2}(m_1-m_\alpha)^\top\Sigma_\alpha^{-1}(m_1-m_\alpha)  \nonumber\\
    &\qquad\qquad\qquad +\frac{\sigma+\gamma}{2}(m_2-m_\beta )^\top\Sigma_\beta ^{-1}(m_2-m_\beta ),   \nonumber\\
    &\bar{C}(\Sigma_1, \Sigma_2, \Sigma_3) \Let \tr\Sigma_1 + \tr\Sigma_2 - 2\tr\Sigma_3 \nonumber\\
    & + \frac{(\sigma+\gamma)}{2}\bigl(\tr{\Sigma_{\alpha}^{-1}\Sigma_1} + \tr{\Sigma_{\beta}^{-1}\Sigma_2}- \log\det\Sigma_1\bigr) \nonumber\\
    &  - \frac{\sigma}{2}\log\det\left(\Sigma_2 - \Sigma_3^{\top}\Sigma_1^{-1}\Sigma_{3}\right) -\frac{\gamma}{2}\log\det\Sigma_2, \nonumber\\
    &\bar{\psi}(c)\Let c(\sigma + 2 \gamma) \left(\log c -1-\frac{d}{2}\right)  + \frac{c(\sigma+\gamma)}{2}\bigl(\log\det\Sigma_\alpha \nonumber\\
    & \qquad\quad + \log\det\Sigma_{\beta} - 2 \log c_{\alpha}c_{\beta}\bigr) + \sigma c_\alpha c_{\beta} + \gamma(c_{\alpha}+c_{\beta}). \nonumber
\end{align}
As in the UOT case, the problem is not jointly convex, but it decouples into the optimization over \(c\) and that over the remaining variables.

\begin{remark}
    In the EUOT case, the reduced finite-dimensional problem~\eqref{eq:finite-dim-euot} includes the off-diagonal block \(\Sigma_3\) of the covariance matrix of the transport plan as a decision variable. By contrast, in the UOT case, the reduced problem \eqref{eq:finite-dim-uot} involves only the diagonal blocks \(\Sigma_1\) and \(\Sigma_2\), corresponding to the marginal covariances. This distinction originates from the different nature of Theorems~\ref{thm:gaussian-optimality} and~\ref{thm:euot-gauss-optimality}. In the UOT case, Theorem~\ref{thm:gaussian-optimality} provides an explicit expression for the optimal off-diagonal block in terms of the diagonal blocks, so that the reduced problem can be written only in terms of the marginal covariances. In contrast, Theorem~\ref{thm:euot-gauss-optimality} reduces the infinite-dimensional EUOT problem to a finite-dimensional Gaussian optimization, but does not eliminate the off-diagonal block explicitly. Although one may formally express the stationary off-diagonal block in terms of the diagonal blocks through the first-order optimality condition, substituting this expression yields a nonconvex finite-dimensional problem, which is not tractable computationally. For this reason, we keep \(\Sigma_3\) as an explicit decision variable in the EUOT formulation.
\end{remark}
\section{Global Solutions to UDC}
\label{sec:main_result_udc}

In the previous section, we showed that the UOT (and the EUOT) problem, originally posed as a variational problem, can be transformed without loss of optimality to a finite-dimensional optimization over the first and second moments, and can therefore be solved globally via convex optimization. In this section, we establish an analogous result for the (density control) Problem \ref{prob:udc}.
\subsection{Gaussian reduction for UDC}
\label{subsection:gaussian_affine_reduction}
Recall the expression of the objective function \(J_2\bigl(\pi_1,\{U_t\}_{t=1}^{T-1}\bigr)\) in \eqref{eq:udc} and let \(\bigl(\pi_1,\{U_t\}_{t=1}^{T-1}\bigr)\) be a feasible solution to Problem \ref{prob:udc}. Define the total mass \(0<c \Let \int_{\R^d} \rmd\pi_1\) and, for each \(t=1,...,T-1\), define the normalized joint probability measure \(p_t\) on \(\Rd\times\R^{d'}\) by:
\begin{align*}
    p_t(\rmd x,\rmd u) \Let \frac{1}{c}U_t(\rmd u|x)\pi_t(\rmd x),
\end{align*}
where \(\pi_t\) is defined recursively by \eqref{eq:state:transition}. Denote the first and second moments by
\begin{align}\label{eq:moments_def}
    m_t & \Let \bbE_{(x_t,u_t)\sim p_t}[x_t], \quad\Sigma_t := {\rm Cov}_{(x_t,u_t)\sim p_t}(x_t), \nonumber\\
    \bar u_t & \Let \bbE_{(x_t,u_t)\sim p_t}[u_t], \quad \Sigma^u_t := {\rm Cov}_{(x_t,u_t)\sim p_t}(u_t),  \nonumber\\
    \Lambda_t & \Let {\rm Cov}_{(x_t,u_t)\sim p_t}(u_t,x_t)  = \bbE_{(x_t,u_t)\sim p_t}[\left(u_t - \bar{u}_t\right)\left(x_t - m_t\right)^\top].
\end{align}
Here \(m_t\) and \(\Sigma_t\) denote the mean and covariance of the normalized state distribution at time \(t\), equivalently of the state marginal of
\(p_t\). Thus \(m_t\), in the dynamical setting, plays the same role as the endpoint means \(m_1,m_2\) of the normalized marginals in the (UOT) Problem \ref{prob:uot}.

Using the moments in \eqref{eq:moments_def}, we construct a Gaussian initial distribution and a Gaussian conditional control law of the form
\begin{align}
    \pi_1^{\rm G} &:= c\,\calN(m_1,\Sigma_1), \label{eq:udc:gauss-initial-dist}\\
    U_t^{\rm G}(\cdot | x) &:= \calN\Bigl( \bar u_t + \Lambda_t \Sigma_t^{\dagger}(x-m_t),\; \Sigma^u_t - \Lambda_t \Sigma_t^{\dagger}\Lambda_t^\top\Bigr).\label{eq:gaussian_affine_policy}
\end{align}
Note that since
\begin{align*}
    \begin{bmatrix}
        \Sigma_t &\Lambda_t^\top \\ \Lambda_t & \Sigma_t^u
    \end{bmatrix}
\end{align*}
is the covariance matrix of \((x_t,u_t)\sim p_t\), it is positive semidefinite. Therefore, \(\Sigma^u_t - \Lambda_t \Sigma_t^{\dagger}\Lambda_t^\top\) is positive semidefinite, which follows from the generalized Schur complement for positive semidefinite block matrices \cite[Chapter 16, Theorem 16.1]{Gallier2011}. Consequently, \(U_t^{\rm G}(\cdot | x)\) is well-defined.
Let \(\pi_t^{\rm G}\) and \(p_t^{\rm G}\) denote the  measure and probability law induced by \eqref{eq:udc:gauss-initial-dist} and \eqref{eq:gaussian_affine_policy}:
\begin{align}
    & \pi_{t+1}^{\rm G} := (Ax+Bu)_{\#}(\pi_t^{\rm G}(\rmd x)U_t^{\rm G}(\rmd u |x)),\label{eq:udc:gauss-state-measure-def}\\
    & p_t^{\rm G}(\rmd x,\rmd u):= \frac{1}{c}U_t^{\rm G}(\rmd u|x)\pi_t^{\rm G}(\rmd x). \label{eq:udc:gauss-prob-def}
\end{align}
In the sequel, we prove that \(\bigl(\pi^{\rm G}_1,\{U_t^{\rm G}\}_{t=1}^{T-1}\bigr)\) attains equal or lower cost than the feasible pair \(\bigl(\pi_1,\{U_t\}_{t=1}^{T-1}\bigr)\), which justifies restricting Problem \ref{prob:udc} to Gaussian initial measures and affine-Gaussian control laws. Towards this end, we present the following supporting lemma.

\begin{lemma}\label{lemma:udc:moment-match}
Consider Problem \ref{prob:udc} together with its data and notation. Let \(\bigl(\pi_1,\{U_t\}_{t=1}^{T-1}\bigr)\) be any feasible solution to Problem \ref{prob:udc}, and let the associated moments \((m_t,\Sigma_t,\bar u_t,\Sigma_t^u,\Lambda_t)\) be as defined in \eqref{eq:moments_def}. Define the Gaussian initial measure \(\pi_1^{\rm G}\) and the Gaussian-affine control laws \(\{U_t^{\rm G}\}_{t=1}^{T-1}\) by \eqref{eq:udc:gauss-initial-dist} and \eqref{eq:gaussian_affine_policy}, and let \(\{\pi_t^{\rm G}\}_{t=1}^T\) and \(\{p_t^{\rm G}\}_{t=1}^{T-1}\) be the induced state measures and normalized joint laws defined in \eqref{eq:udc:gauss-state-measure-def} and \eqref{eq:udc:gauss-prob-def}, respectively. Then, for each \(t=1,\dots,T-1\), the joint law \(p_t^{\rm G}\) has the
same first and second moments as \(p_t\), i.e.,
\begin{align}
    \mathbb{E}_{(x_t,u_t)\sim p_t^{\rm G}}
    \begin{pmatrix} x_t\\u_t \end{pmatrix} &= \begin{pmatrix}
    m_t\\ \bar u_t \end{pmatrix}, \label{eq:udc:lem:moment-match-mean}\\[0.5em] {\rm Cov}_{(x_t,u_t)\sim p_t^{\rm G}}
    \begin{pmatrix} x_t\\ u_t \end{pmatrix} &= \begin{bmatrix}
    \Sigma_t & \Lambda_t^\top\\ \Lambda_t & \Sigma_t^u
    \end{bmatrix}. \label{eq:udc:lem:moment-match-cov}
\end{align}
In particular, we have
\begin{align}
    & \mathbb{E}_{\pi_t^{\rm G}}[x_t] = m_t,\,
    {\rm Cov}_{\pi_t^{\rm G}}(x_t) = \Sigma_t, 
    \mathbb{E}_{p_t^{\rm G}}[u_t] = \bar u_t, \nn \\ &
    {\rm Cov}_{p_t^{\rm G}}(u_t) = \Sigma_t^u, \,
     {\rm Cov}_{p_t^{\rm G}}(u_t,x_t) = \Lambda_t. \label{eq:udc:lem:moment-match}
\end{align}
\end{lemma}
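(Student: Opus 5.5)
We argue by induction on \(t\), showing that the state marginal of \(p_t^{\rm G}\) is Gaussian with \(\pi_t^{\rm G}=c\,\calN(m_t,\Sigma_t)\), and that the joint law \(p_t^{\rm G}\) is the Gaussian \(\calN\bigl((m_t,\bar u_t),\,\bigl[\begin{smallmatrix}\Sigma_t&\Lambda_t^\top\\ \Lambda_t&\Sigma^u_t\end{smallmatrix}\bigr]\bigr)\).

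\textbf{Base case.} For \(t=1\) the first claim holds by the definition \eqref{eq:udc:gauss-initial-dist}. Since each \(U_t^{\rm G}(\cdot|x)\) is a probability measure, \(\pi_t^{\rm G}\) always has mass \(c\), so the normalization in \eqref{eq:udc:gauss-prob-def} is consistent.

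\textbf{Inductive step.} Suppose the normalized state law at time \(t\) is \(\calN(m_t,\Sigma_t)\). Represent \((x_t,u_t)\sim p_t^{\rm G}\) as
\[
u_t=\bar u_t+\Lambda_t\Sigma_t^\dagger(x_t-m_t)+w_t,
\]
where \(w_t\sim\calN(0,\Sigma^u_t-\Lambda_t\Sigma_t^\dagger\Lambda_t^\top)\) is independent of \(x_t\). The pair \((x_t,u_t)\) is then an affine image of the independent Gaussians \((x_t,w_t)\), hence jointly Gaussian. A direct computation gives \(\bbE[x_t]=m_t\), \(\bbE[u_t]=\bar u_t\), and \({\rm Cov}(x_t)=\Sigma_t\). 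It also gives
\[
{\rm Cov}(u_t,x_t)=\Lambda_t\Sigma_t^\dagger\Sigma_t,\qquad
{\rm Cov}(u_t)=\Lambda_t\Sigma_t^\dagger\Sigma_t\Sigma_t^\dagger\Lambda_t^\top+\Sigma^u_t-\Lambda_t\Sigma_t^\dagger\Lambda_t^\top .
\]
Using \(\Sigma_t^\dagger\Sigma_t\Sigma_t^\dagger=\Sigma_t^\dagger\), the second expression reduces to \(\Sigma^u_t\) immediately. This yields \eqref{eq:udc:lem:moment-match-mean} and all blocks of \eqref{eq:udc:lem:moment-match-cov}, except that the cross term is \(\Lambda_t\Sigma_t^\dagger\Sigma_t\) rather than \(\Lambda_t\).

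\textbf{Main obstacle: the cross-covariance when \(\Sigma_t\) is singular.} We must show \(\Lambda_t\Sigma_t^\dagger\Sigma_t=\Lambda_t\). The matrix \(\Sigma_t^\dagger\Sigma_t\) is the orthogonal projector onto \(\mathrm{range}(\Sigma_t)\), so it suffices to show \(\Lambda_t v=0\) for every \(v\in\ker\Sigma_t\). For such \(v\), \(\mathrm{Var}_{p_t}(v^\top x_t)=v^\top\Sigma_tv=0\), so \(v^\top(x_t-m_t)=0\) \(p_t\)-a.s. Therefore
\[
\Lambda_t v=\bbE_{p_t}\bigl[(u_t-\bar u_t)\,v^\top(x_t-m_t)\bigr]=0 .
\]
Alternatively, one can cite the range inclusion \(\mathrm{range}(\Lambda_t^\top)\subseteq\mathrm{range}(\Sigma_t)\) for PSD block matrices from \cite[Theorem 16.1]{Gallier2011}. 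Either way, \({\rm Cov}(u_t,x_t)=\Lambda_t\).

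\textbf{Propagation to \(t+1\).} Since \((x_t,u_t)\) is jointly Gaussian under \(p_t^{\rm G}\), the push-forward \(x_{t+1}=Ax_t+Bu_t\) in \eqref{eq:udc:gauss-state-measure-def} is Gaussian, with mean \(Am_t+B\bar u_t\) and covariance
\[
A\Sigma_tA^\top+A\Lambda_t^\top B^\top+B\Lambda_tA^\top+B\Sigma^u_tB^\top .
\]
For the original pair, \eqref{eq:state:transition} gives that the normalized law of \(\pi_{t+1}\) is the push-forward of \(p_t\) under the same linear map. Its mean and covariance are therefore the same expressions, since these depend only on the first and second moments of \(p_t\), which agree with those of \(p_t^{\rm G}\). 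Hence these expressions equal \(m_{t+1}\) and \(\Sigma_{t+1}\), so \(\pi_{t+1}^{\rm G}=c\,\calN(m_{t+1},\Sigma_{t+1})\), closing the induction.

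The statements in \eqref{eq:udc:lem:moment-match} are then read off from the blocks of \eqref{eq:udc:lem:moment-match-mean} and \eqref{eq:udc:lem:moment-match-cov}.
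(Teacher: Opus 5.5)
Your proof is correct and follows essentially the same induction as the paper: you compute conditional moments of the affine-Gaussian kernel (via the independent-noise representation rather than the tower property and total covariance), use $\Sigma_t^\dagger\Sigma_t\Sigma_t^\dagger=\Sigma_t^\dagger$, and propagate through the covariance recursion. Your direct kernel argument for $\Lambda_t\Sigma_t^\dagger\Sigma_t=\Lambda_t$ replaces the paper's citation of Albert's theorem, and you also explicitly show that $\pi_t^{\rm G}=c\,\mathcal{N}(m_t,\Sigma_t)$, which the paper's proof of the Gaussian-reduction theorem needs at time $T$ but only uses implicitly.
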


Note that as defined in \eqref{eq:moments_def}, all the quantities on the right-hand side of \eqref{eq:udc:lem:moment-match} are computed with respect to \(p_t\), while those on the left-hand side are computed with respect to \(p_t^{\rm G}\).
\begin{proof}[Proof of Lemma~\ref{lemma:udc:moment-match}]
    In this proof, expectation and covariance are all computed with respect to \(p_t^{\rm G}\). For readability, we drop subscripts. 
    First, we prove that 
    \begin{equation}\label{eq:udc:lem:state-control-mean-equality}
        \begin{aligned}
            &\bbE[x_t] = m_t \text{ and }\bbE[u_t] = \bar{u}_t 
        \end{aligned}
    \end{equation}
    We proceed via induction. For \(t=1\), it holds by definition \eqref{eq:udc:gauss-initial-dist} that
    \begin{align}\label{eq:udc:lem:initial-mean-equality}
        \bbE[x_1] = m_1
    \end{align}
    By the tower property, we obtain
    \begin{equation}\label{eq:udc:lem:bar_u_initial}
        \begin{aligned}
            \bbE[u_1]& = \bbE\left[\bbE[u_1 |x_1]\right]  = \bbE[\bar u_1 + \Lambda_1 \Sigma_1^{\dagger}(x_1-m_1)] \\
            & = \bar{u}_1 + \Lambda_1 \Sigma_1^{\dagger}\left(\bbE[x_1] - m_1\right)  = \bar{u}_1
        \end{aligned}
    \end{equation}
    Note that we invoked \eqref{eq:udc:lem:initial-mean-equality} in the last equality.
    Suppose \eqref{eq:udc:lem:state-control-mean-equality} holds. Then, since \(m_t, \bar{u}_t\) satisfy \(m_{t+1} = Am_t + B\bar{u}_t\) from the dynamics constraint, we obtain
    \begin{align*}
        \bbE[x_{t+1}] &= A\bbE[x_t] + B \bbE[u_t]  = Am_t + B\bar{u}_t = m_{t+1}
    \end{align*}
    Employing similar steps as in \eqref{eq:udc:lem:bar_u_initial}, we obtain \(\bbE[u_{t+1}] = \bar{u}_{t+1}\), which completes the proof of \eqref{eq:udc:lem:state-control-mean-equality}. We now show
    \begin{equation}\label{eq:udc:lem:cov-equalities}
        \begin{aligned}
            {\rm Cov}(x_t) = \Sigma_t,\, {\rm Cov}(u_t, x_t) = \Lambda_t,\, \text{and }{\rm Cov}(u_t) = \Sigma_t^u.
        \end{aligned}
    \end{equation}
    We will again proceed via induction. For \(t=1\), it trivially holds by the definition \eqref{eq:udc:gauss-initial-dist} that:
    \begin{align}\label{eq:udc:lem:initial-cov-equality}
        {\rm Cov}(x_1) = \Sigma_1
    \end{align} 
    By the tower property, we obtain:
    \begin{align*}
        &{\rm Cov}(u_1, x_1) = \bbE\left[\bbE\left[(u_1-\bar{u}_1)|x_1\right]\left(x_1-m_1\right)^\top\right] \\
        & = \bbE\left[\left(\Lambda_1\Sigma_1^\dagger(x_1 - m_1)\right)\left(x_1 - m_1\right)^\top\right] \\
        & = \Lambda_1\Sigma_1^\dagger\bbE\left[\left(x_1 - m_1\right)\left(x_1 - m_1\right)^\top\right] = \Lambda_1
    \end{align*}
    In the last equality, we invoked \cite[Theorem 1]{Albert1969}.
    From the law of total covariance, it holds that 
    \begin{equation}\label{eq:udc:lem:cov_lambda_initial}
        \begin{aligned}
            {\rm Cov}(u_1) = &\bbE[{\rm Cov}(u_1|x_1)] +{\rm Cov}\left(\bbE[u_1|x_1]\right)
        \end{aligned}
    \end{equation}
    Substituting the definition \eqref{eq:udc:gauss-initial-dist}, we obtain:
        \begin{align}\label{eq:udc:lem:cov-u-initial}
            &{\rm Cov}(u_1)=  \Sigma^u_1 - \Lambda_1 \Sigma_1^{\dagger}\Lambda_1^\top + {\rm Cov}\left(\bar u_1 + \Lambda_1 \Sigma_1^{\dagger}(x-m_1)\right) \nn
            \\ &= \Sigma^u_1 - \Lambda_1 \Sigma_1^{\dagger}\Lambda_1^\top +  \Lambda_1 \Sigma_1^{\dagger}{\rm Cov}\left(x-m_1\right)  \Sigma_1^{\dagger} \Lambda_1^\top = \Sigma_1^u
        \end{align} 
    Note that in the last equality, we employed \eqref{eq:udc:lem:initial-mean-equality} and \eqref{eq:udc:lem:initial-cov-equality}. Suppose \eqref{eq:udc:lem:cov-equalities} holds. Then, since \(\Sigma_t, \Sigma_t^{u}, \) satisfy the covariance recursion \(\Sigma_{t+1} = A\Sigma_t A^\top + A \Lambda_t^\top B^\top + B\Lambda_t A^\top + B\Sigma_t^u B^\top\), it holds that
    \begin{align*}
        &{\rm Cov}[x_{t+1}]  = A{\rm Cov}(x_t)A^\top  + A {\rm Cov}(u_t,x_t)^\top B^\top + B{\rm Cov}( u_t,x_t) B^\top + B{\rm Cov}(u_t)B^\top \\
        & = A\Sigma_t A^\top +A \Lambda_t^\top B^\top + B\Lambda_t A^\top + B\Sigma_t^u B^\top = \Sigma_{t+1}
    \end{align*} 
    Applying the same computation as \eqref{eq:udc:lem:cov_lambda_initial} with the assumption that \eqref{eq:udc:lem:cov-equalities} holds, we can prove \({\rm Cov}(u_{t+1}, x_{t+1}) = \Lambda_{t+1}\). Likewise, applying the same computation as \eqref{eq:udc:lem:cov-u-initial} to \({\rm Cov}(u_{t+1})\) with the mean correspondence \eqref{eq:udc:lem:state-control-mean-equality}, which we have already proven, we can show \({\rm Cov}(u_{t+1}) = \Sigma_{t+1}^{u}\), which completes the proof. 
\end{proof}

\begin{theorem}[Gaussian reduction for UDC]\label{thm:udc_gaussian_restriction}
    Let $(\pi_1,\{U_t\}_{t=1}^{T-1})$ be any feasible solution to the (UDC) Problem~\eqref{prob:udc}. As Lemma~\ref{lemma:udc:moment-match}, let $(\pi_1^G,\{U_t^G\}_{t=1}^{T-1})$ be the Gaussian solution constructed from the first and second moments of $(\pi_1,\{U_t\}_{t=1}^{T-1})$. Then, it holds that
    \[
    J_2\bigl(\pi_1,\{U_t\}_{t=1}^{T-1}\bigr)\ \ge\ J_2\bigl(\pi_1^{\rm G},\{U_t^{\rm G}\}_{t=1}^{T-1}\bigr).
    \]
\end{theorem}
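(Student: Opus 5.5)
We split $J_2$ into its three terms (control cost, initial KL penalty, terminal KL penalty) and compare each term separately, using Lemma~\ref{lemma:udc:moment-match} to transfer moments from the original solution to the Gaussian one. First we check that $(\pi_1^{\rm G},\{U_t^{\rm G}\}_{t=1}^{T-1})$ is admissible, i.e.\ lies in $\mathcal{U}$. The kernels $U_t^{\rm G}(\cdot|x)$ are Gaussian with mean affine in $x$ and constant covariance, so they are Borel probability measures depending measurably on $x$. Each $\pi_t^{\rm G}$ has mass $c$, and by Lemma~\ref{lemma:udc:moment-match} it has finite second moments equal to those of $\pi_t$, so $\pi_t^{\rm G}\in\calM_2^+(\Rd)$. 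The finiteness of the control energy follows from the control-cost computation below.

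\textbf{Control cost.} For each $t$,
\[
\int\!\!\int \|u\|^2\,U_t(\rmd u|x)\,\pi_t(\rmd x)=c\,\bbE_{p_t}\|u_t\|^2=c\bigl(\|\bar u_t\|^2+\tr\Sigma_t^u\bigr).
\]
By \eqref{eq:udc:lem:moment-match}, the joint law $p_t^{\rm G}$ has the same mean $\bar u_t$ and covariance $\Sigma_t^u$ for $u_t$. Hence the control cost of the Gaussian solution equals that of the original solution, term by term in $t$. This gives equality, not merely an inequality.

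\textbf{Initial KL term.} The measure $\pi_1$ has mass $c$, and $\bar\pi_1$ has mean $m_1$ and covariance $\Sigma_1$. Proposition~\ref{prop:gauss-min-kl} with $c'=c_\alpha$ then gives $\KL{\pi_1}{\alpha}\ge\KL{c\,\calN(m_1,\Sigma_1)}{\alpha}=\KL{\pi_1^{\rm G}}{\alpha}$. Proposition~\ref{prop:gauss-min-kl} is stated for $\Sigma\succ\textup{O}$. If $\Sigma_1$ is singular, then $\pi_1$ is concentrated on a proper affine subspace, which is Lebesgue-null, so $\KL{\pi_1}{\alpha}=+\infty$ and the inequality is trivial. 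In fact, if $J_2(\pi_1,\mathbf{U})=+\infty$ there is nothing to prove, so we may assume finiteness throughout.

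\textbf{Terminal KL term.} This is the step needing the most care. We must show that $\pi_T^{\rm G}$ is exactly the Gaussian measure $c\,\calN(m_T,\Sigma_T)$, where $m_T,\Sigma_T$ are the terminal mean and covariance of the original solution. Two facts are needed.
\begin{enumerate}
\item \emph{Gaussianity is preserved by the recursion.} Proceed by induction on $t$. If $\pi_t^{\rm G}$ is $c$ times a (possibly degenerate) Gaussian, then $p_t^{\rm G}$ is a jointly Gaussian law on $\Rd\times\R^{d'}$, because it is a Gaussian marginal composed with a linear-Gaussian conditional. The pushforward under the linear map $(x,u)\mapsto Ax+Bu$ is then again Gaussian, so $\pi_{t+1}^{\rm G}$ is $c$ times a Gaussian.
\item \emph{The terminal moments match.} Lemma~\ref{lemma:udc:moment-match} only covers $t\le T-1$, so the terminal step must be added. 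Because $(x_{T-1},u_{T-1})$ has the same mean and covariance under $p_{T-1}^{\rm G}$ as under $p_{T-1}$, the linear image $Ax_{T-1}+Bu_{T-1}$ has mean $Am_{T-1}+B\bar u_{T-1}=m_T$ and covariance $\Sigma_T$ under both laws, exactly as in the inductive step of the lemma's proof.
\end{enumerate}
Thus $\pi_T^{\rm G}=c\,\calN(m_T,\Sigma_T)$. Since $\KL{\pi_T}{\beta}<\infty$ forces $\Sigma_T\succ\textup{O}$ by the absolute-continuity argument above, Proposition~\ref{prop:gauss-min-kl} applies and gives $\KL{\pi_T}{\beta}\ge\KL{\pi_T^{\rm G}}{\beta}$.

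Adding the equality for the control cost and the two KL inequalities, each multiplied by $\gamma>0$, yields $J_2(\pi_1,\{U_t\})\ge J_2(\pi_1^{\rm G},\{U_t^{\rm G}\})$. The main obstacle is the terminal step. The lemma must be extended to $t=T$, and the Gaussian-preservation induction must handle possibly degenerate intermediate covariances $\Sigma_t$ and the pseudo-inverse in the policy. This is harmless, because a Gaussian marginal combined with an affine-Gaussian kernel is always jointly Gaussian, with no invertibility required.
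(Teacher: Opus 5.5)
Your proof is correct and follows the paper's argument. The control cost is unchanged because it depends only on $\bar u_t$ and $\Sigma_t^u$, which Lemma~\ref{lemma:udc:moment-match} preserves, and both KL terms can only decrease by Proposition~\ref{prop:gauss-min-kl}; your extra steps (checking admissibility, extending the moment match to $t=T$, showing $\pi_T^{\rm G}=c\,\calN(m_T,\Sigma_T)$, and treating singular $\Sigma_1,\Sigma_T$ as the infinite-KL case) fill in details the paper leaves implicit without changing the route.
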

\begin{proof}
    First, we justify the following formula:
    \begin{align}\label{eq:udc-control-cost-equality}
        \bbE_{u_t\sim cp_t}\left[\sum_{t=1}^{T-1}\|u_t\|^2\right] = \bbE_{u_t\sim cp_t^{\rm G}}\left[\sum_{t=1}^{T-1}\|u_t\|^2\right]
    \end{align}
    From elementary computations, we obtain the following decomposition:
    \begin{align*}
        \bbE_{u_t\sim cp_t}\left[\sum_{t=1}^{T-1}\|u_t\|^2\right] = \sum_{t=1}^{T-1}c \tr (\Sigma_t^u + \bar u_t \bar u_t^\top)
    \end{align*}
    which implies that the quadratic control cost depends only on the first/second moments of control policy. From
    Lemma~\ref{lemma:udc:moment-match}, the first/second moments of control input by the solution \(\bigl(\pi_1,\{U_t\}_{t=1}^{T-1}\bigr)\) match that of \(\bigl(\pi_1^{\rm G},\{U_t^{\rm G}\}_{t=1}^{T-1}\bigr)\). Therefore, the control costs remain unchanged by the replacement. Moreover, as shown in Proposition~\ref{prop:gauss-min-kl}, among all measures with prescribed mass, mean, and covariance, the Gaussian measure minimizes the KL divergence to a Gaussian reference. Also, Lemma~\ref{lemma:udc:moment-match} proves that \(\pi_T\) and \(\pi_{T}^{\rm G}\) have the same mass, mean, and covariance. Therefore, the KL penalty terms are minimized by the replacement. The proof is complete.
\end{proof}

As a consequence, we may restrict attention to Gaussian state measures and \emph{Gaussian--affine feedback policies} of the form
\[
u_t \Let K_t(x_t-m_t)+v_t+w_t \quad\text{for } w_t\sim \calN(0,\Sigma^u_t).
\]
Under the above restriction, the UDC problem is equivalently written as the following optimization over
$c$, the mean trajectory $\{m_t\}_{t=1}^{T}$, the covariance trajectory $\{\Sigma_t\}_{t=1}^{T}$, and the affine-feedback parameters $\{v_t,K_t,\Sigma^u_t\}_{t=1}^{T-1}$
\begin{subequations}\label{eq:udc-gauss-reduced-problem}
    \begin{align}
        &\inf_{c,m_t,\Sigma_t,v_t,K_t,\Sigma^u_t} \,\,  c\Biggl[ \sum_{t=1}^{T-1}\Bigl(\|v_t\|^2 + \tr(K_t\Sigma_t K_t^\top) + \tr(\Sigma^u_t)\Bigr) + \gamma M(m_1,m_T)  \nonumber\\
        & \qquad\qquad\qquad\qquad+ \gamma S(\Sigma_1,\Sigma_T) \Biggr] + \gamma \psi(c) \label{eq:udc_reduced}\\
        &\qquad\,\, \text{ s.t.}\,\,\, m_{t+1}=A m_t + B v_t,\nonumber\\
        & \qquad\qquad\,\,\Sigma_{t+1}=(A+BK_t)\Sigma_t(A+BK_t)^\top + B\Sigma^u_tB^\top \label{eq:udc-cov-recursion}\\
        &\qquad\qquad\,\,\text{for }t=1,\dots,T-1,\nonumber
    \end{align}
\end{subequations}
 where \(M\) and \(S\) in \eqref{eq:udc-gauss-reduced-problem} are given by
\begin{align*}
    &M(m_1,m_T) \Let \frac12(m_1-m_\alpha)^\top \Sigma_\alpha^{-1}(m_1-m_\alpha) +\frac12(m_T-m_\beta)^\top \Sigma_\beta^{-1}(m_T-m_\beta), \\
    &S(\Sigma_1,\Sigma_T)  \Let \frac12\Bigl\{\tr(\Sigma_\alpha^{-1}\Sigma_1)+\tr(\Sigma_\beta^{-1}\Sigma_T)-\log\det\Sigma_1-\log\det\Sigma_T\Bigr\},
\end{align*}
and $c\mapsto \psi(c)$ is the same as the UOT case~\eqref{eq:uot-def-of-m-c-psi}. As in the UOT case, we separate \eqref{eq:udc-gauss-reduced-problem} into the optimization over \(c\) and the one over other remaining variables, \(m_t, \Sigma_t, v_t, K_t, \Sigma_t^{u}\) to derive a computationally tractable reformulation. The latter problem is still a nonconvex optimization problem due to the bilinear terms involving \(K_t\) and \(\Sigma_t\) in \eqref{eq:udc_reduced}--\eqref{eq:udc-cov-recursion}. To obtain a \emph{convex formulation} without loss of optimality, we apply the standard covariance-steering lifting \cite{Balci2022}
\begin{align}\label{eq:udc:change-variable}
    S_t \Let K_t\Sigma_t,\quad Y_t \Let K_t\Sigma_t K_t^\top +\Sigma_t^u.
\end{align}
Under this change of variables, the covariance recursion becomes affine, and
the relation between \((S_t,Y_t)\) and \((K_t,\Sigma_t^u)\) is expressed exactly by the linear matrix inequality
\begin{align}\label{udc:reform-LMI-const}
\begin{bmatrix}
Y_t & S_t\\ S_t^\top & \Sigma_t
\end{bmatrix}\succeq \textup{O}. 
\end{align}
Hence, we obtain the following convex subproblem:
\begin{subequations}\label{eq:sdp_subproblem-convex-reform}
    \begin{align}
        &\inf_{m_t,\Sigma_t,v_t,S_t,Y_t} \,\,  \sum_{t=1}^{T-1}\Bigl(\|v_t\|^2 + \tr(Y_t)\Bigr) + \gamma M(m_1,m_T) + \gamma S(\Sigma_1,\Sigma_T) \nonumber\\
        &\qquad\,\, \text{s.t.}\,\,\, m_{t+1}=A m_t + B v_t,\label{udc:subproblem-LMI-const} \\
        & \qquad\qquad\,\Sigma_{t+1} = A\Sigma_t A^\top + BS_tA^\top + AS_t^\top B^\top + BY_tB^\top \label{udc:subproblem-covariance-dynamics}\\
        &\qquad\qquad\,\dmat{Y_{t}}{S_{t}}{S_{t}^{\top}}{\Sigma_{t}} \succeq \textup{O} \text{ for }t=1,\dots,T-1.\nn
    \end{align}
\end{subequations}
For notational convenience, let us define
\[
z \Let \Bigl(\{m_t,\Sigma_t\}_{t=1}^{T},\{v_t,S_t,Y_t\}_{t=1}^{T-1}\Bigr),
\]
and consider the feasible set for the problem \eqref{eq:sdp_subproblem-convex-reform}
\begin{align*}
    \mathcal{F}\Let \left\{ z \;\middle\vert  \;
    \begin{array}{@{}l@{}}
        \eqref{udc:subproblem-LMI-const},\,\eqref{udc:subproblem-covariance-dynamics} \text{ and \eqref{udc:reform-LMI-const} hold for } t=1,\dots,T-1
    \end{array}
    \right\}.
\end{align*}
Also define the mapping 
\begin{align}
\mathcal{F} \ni  z \mapsto f(z) &\Let \sum_{t=1}^{T-1}\bigl(\|v_t\|^2+\tr(Y_t)\bigr)+\gamma M(m_1,m_T) +\gamma S(\Sigma_1,\Sigma_T) \in \R. \nn
\end{align}
With these preceding notations, \eqref{eq:udc-gauss-reduced-problem} can be written as
\begin{align}\label{eq:udc:total:problem}
      \inf_{c> 0,\; z\in\mathcal{F}}\, \bigl(c\,f(z)+\psi(c)\bigr).
\end{align}
We are now ready to study the well-posedness of the problems \eqref{eq:udc-gauss-reduced-problem}, \eqref{eq:sdp_subproblem-convex-reform}, and \eqref{eq:udc:total:problem}.

\subsection{Existence of solutions}
By Theorem~\ref{thm:udc_gaussian_restriction}, the (UDC) Problem~\ref{prob:udc} can be restricted, without loss of optimality, to Gaussian initial measures and affine-Gaussian control laws. Furthermore, the reparametrization~\eqref{eq:udc:change-variable} yields the exact reformulation~\eqref{eq:sdp_subproblem-convex-reform}. Therefore, to establish existence of a minimizer for Problem~\ref{prob:udc}, it suffices to prove existence of a minimizer for~\eqref{eq:udc:total:problem}.  The present subsection formalizes this fact.

\begin{proposition}[Existence of a minimizer for the reduced UDC problem]\label{prop:existence-reduced-udc}
Consider the problems \eqref{eq:udc-gauss-reduced-problem}, \eqref{eq:sdp_subproblem-convex-reform}, and \eqref{eq:udc:total:problem} along with their data and notations. Then the convex subproblem \eqref{eq:sdp_subproblem-convex-reform} admits a minimizer, and consequently the problem \eqref{eq:udc:total:problem}, equivalently \eqref{eq:udc-gauss-reduced-problem}, admits a minimizer.
\end{proposition}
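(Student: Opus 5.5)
The plan is to establish existence for the convex subproblem \eqref{eq:sdp_subproblem-convex-reform} by a direct-method argument (nonempty feasible set, coercive and lower semicontinuous objective), and then to handle the mass variable $c$ using the closed form from Corollary~\ref{cor:uot-optimal-c}.

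\textbf{Step 1: a feasible point with finite value.} Take $m_1=m_\alpha$, $\Sigma_1=\Sigma_\alpha$, $v_t=0$, $S_t=\textup{O}$ and $Y_t=\textup{O}$. Then $\Sigma_{t+1}=A\Sigma_tA^\top$. Since $A$ is nonsingular, every $\Sigma_t\succ\textup{O}$, in particular $\Sigma_T\succ\textup{O}$. Hence $f$ is finite at this point, and the value $p^\ast:=\inf_{\mathcal{F}}f$ is finite from above.

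\textbf{Step 2: lower bound and coercivity.} Using $\tr(\Sigma^{-1}_{\rm ref}\Sigma)-\log\det\Sigma\ge \lambda\tr\Sigma-\log\det\Sigma$ with $\lambda=\lambda_{\min}(\Sigma_{\rm ref}^{-1})>0$, each endpoint term in $S$ is bounded below and tends to $+\infty$ in two cases: when $\|\Sigma_1\|\to\infty$ (respectively $\|\Sigma_T\|\to\infty$), and when $\lambda_{\min}(\Sigma_1)\to 0$ (respectively $\Sigma_T$). Therefore on a sublevel set $\{z\in\mathcal{F}: f(z)\le f(z_0)\}$ we obtain the following bounds:
\begin{itemize}
\item $\|v_t\|$ and $\tr Y_t$ are bounded, and since $Y_t\succeq\textup{O}$ by the LMI, each $Y_t$ is bounded;
\item $m_1$ is bounded, because $M$ is a positive definite quadratic in $m_1$;
\item $m_t$ is bounded by propagating the linear mean dynamics;
\item $\Sigma_1$ is bounded and $\Sigma_1\succeq\epsilon I$ for some $\epsilon>0$.
\end{itemize}
The LMI \eqref{udc:reform-LMI-const} gives $\|S_t\|^2\le\|Y_t\|\,\|\Sigma_t\|$. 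Inductively, if $\Sigma_t$ is bounded, then $S_t$ is bounded and so $\Sigma_{t+1}$ is bounded via \eqref{udc:subproblem-covariance-dynamics}. Finally, $\Sigma_T\succeq\epsilon' I$ follows from the $-\log\det\Sigma_T$ term together with $\tr\Sigma_T$ being bounded. Thus the sublevel set is bounded and stays inside the region where $\Sigma_1,\Sigma_T\succ\textup{O}$.

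\textbf{Step 3: closedness and attainment.} The constraints defining $\mathcal{F}$ are linear equalities together with closed LMIs, so $\mathcal{F}$ is closed. The function $f$ is continuous on $\{\Sigma_1,\Sigma_T\succ\textup{O}\}$ and equals $+\infty$ on its boundary, so it is lower semicontinuous. The sublevel set is therefore compact, and Weierstrass gives a minimizer $z^\ast$ with value $p^\ast$.

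\textbf{Step 4: the mass variable.} For \eqref{eq:udc:total:problem}, write
\[
\inf_{c>0,\,z\in\mathcal F}\bigl(cf(z)+\psi(c)\bigr)=\inf_{c>0}\bigl(cp^\ast+\psi(c)\bigr),
\]
which holds because $c>0$. The function $c\mapsto cp^\ast+\psi(c)$ is strictly convex, grows like $c\log c$ as $c\to\infty$, and has derivative $\to-\infty$ as $c\to 0^+$ from the $\log c$ terms. Hence its unique stationary point, given in closed form exactly as in Corollary~\ref{cor:uot-optimal-c}, is the minimizer $c^\ast>0$.

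\textbf{Step 5: back to \eqref{eq:udc-gauss-reduced-problem}.} Given $(c^\ast,z^\ast)$, recover the original variables by $K_t=S_t\Sigma_t^\dagger$ and $\Sigma^u_t=Y_t-S_t\Sigma_t^\dagger S_t^\top\succeq\textup{O}$, which is a generalized Schur complement. We need the range condition $S_t=S_t\Sigma_t^\dagger\Sigma_t$ for this reconstruction to reproduce $S_t$ and to make \eqref{eq:udc-cov-recursion} hold; the positive semidefinite block structure of the LMI guarantees it, as in \cite{Gallier2011}.

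I expect the main obstacle to be the coercivity in Step 2, specifically bounding the intermediate $\Sigma_t$ and $S_t$, which do not appear in the cost. The induction through the LMI bound $\|S_t\|^2\le\|Y_t\|\|\Sigma_t\|$ handles this. A second point of care is the reconstruction in Step 5 when some intermediate $\Sigma_t$ is singular, which is where the range condition is needed.
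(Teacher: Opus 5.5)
Your proof is correct and has the same structure as the paper's: the direct method on the lifted SDP (closed feasible set, lower semicontinuity from $-\log\det=+\infty$ at singular endpoint covariances, bounded sublevel sets), followed by the scalar minimization in $c$. The differences are minor and mostly favour you: you bound the intermediate $\Sigma_t,S_t$ by a forward induction with $\|S_t\|^2\le\|Y_t\|\,\|\Sigma_t\|$ instead of the paper's bound $\Sigma_{t+1}\preceq 2A\Sigma_tA^\top+2BY_tB^\top$; your derivative argument as $c\to0^+$ shows the mass minimizer satisfies $c^\ast>0$, whereas the paper only gets $c^\star\ge 0$ on $[0,+\infty)$ and never rules out the infeasible value $c^\star=0$; and your Step~5 spells out the pseudo-inverse reconstruction of $(K_t,\Sigma^u_t)$, which the paper takes for granted.
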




\begin{proof}
Consider the feasible set \(\fsblset\) for \eqref{eq:sdp_subproblem-convex-reform}, and observe that \(\fsblset\) is closed. Indeed, \eqref{udc:subproblem-LMI-const} and \eqref{udc:subproblem-covariance-dynamics} are affine equality constraints, hence define closed sets, and \eqref{udc:reform-LMI-const} is a
positive-semidefinite constraint, hence also defines a closed set. Therefore \(\fsblset\), being a finite intersection of closed sets, is closed. In \eqref{eq:udc:total:problem}, we first consider the inner problem \(p^\star:=\inf_{z\in\mathcal{F}} f(z)\) and we check that this problem has a finite-valued feasible point: it suffices to choose \(m_t=0,v_t=0,\Sigma_1=I,S_t=0,\) and \(Y_t=0\) for all $t$. Then~\eqref{udc:subproblem-covariance-dynamics} yields \(\Sigma_{t}=A^{t-1}(A^\top)^{t-1}\). Since \(\text{det}\ A\neq 0\), \(-\log\det\Sigma_T\) is finite, therefore the objective function is also finite.

Moreover, \(f(\cdot)\) is sequentially lower semicontinuous on \(\fsblset\). The quadratic, trace, and mean terms are continuous. For the endpoint covariance term, define \(X_1 \Let \Sigma_\alpha^{-1/2}\Sigma_1\Sigma_\alpha^{-1/2},\) and \(X_T \Let \Sigma_\beta^{-1/2}\Sigma_T\Sigma_\beta^{-1/2}.\)
Then, up to an additive constant depending only on \(\Sigma_\alpha\) and \(\Sigma_\beta\), the covariance part of \(S(\Sigma_1,\Sigma_T)\) is \(\bigl(\tr(X_1)-\log\det X_1\bigr) + \bigl(\tr(X_T)-\log\det X_T\bigr).\) Extending \(X \mapsto \tr(X)-\log\det X\) by \(+\infty\) on singular matrices, this function is lower semicontinuous. Hence the endpoint covariance term is lower semicontinuous \cite[\S 1.2.1]{ref:santambrogio2023course}.

We now show that \(f\) is sequentially coercive \cite[Proposition 1.1]{Ambrosio1988} on \(\fsblset\) via the following steps: 
\begin{itemize}[leftmargin=*, label=\(\circ\)]
\item  For \(C\in\mathbb R\), define the sublevel set
\[
\fsblset_C:=\{z\in\fsblset\;|\;f(z)\le C\}.
\]
We claim that \(\fsblset_C\) is compact. Since \(f\) is lower semicontinuous and \(\fsblset\) is closed, \(\fsblset_C\) is closed. It remains to prove boundedness. Let \(z\in\fsblset_C\). Since
\[
M(m_1,m_T)\ge 0,\qquad \|v_t\|^2\ge 0,\qquad \operatorname{tr}(Y_t)\ge 0
\]
whenever \((35)\) holds, and since \(S(\Sigma_1,\Sigma_T)\) is bounded below by a constant depending only on \(\Sigma_\alpha\) and \(\Sigma_\beta\), it follows that \(\sum_{t=1}^{T-1}\|v_t\|^2\), \(\sum_{t=1}^{T-1}\operatorname{tr}(Y_t)\), \(M(m_1,m_T)\),and \(S(\Sigma_1,\Sigma_T)\) are all bounded above on \(\fsblset_C\). Hence \(\{v_t\}\) is bounded for each \(t\), and \(m_1,m_T\) are bounded. Also, from \eqref{udc:reform-LMI-const}, we have \(Y_t\succeq \textup{O}\), and therefore \(\|Y_t\|\le \operatorname{tr}(Y_t).\) Thus \(\{Y_t\}\) is bounded for each \(t\).

\item Boundedness of \(S(\Sigma_1,\Sigma_T)\) implies that \(\Sigma_1\) and \(\Sigma_T\) are bounded and uniformly positive definite. Indeed, if \(\lambda\) denotes an
eigenvalue of \(X_1\) or \(X_T\), then \(\lambda-\log\lambda\to+\infty\) as \(\lambda\downarrow 0\) or \(\lambda\uparrow+\infty,\) so the eigenvalues remain in a compact subinterval of \((0,+\infty)\). Then \eqref{udc:subproblem-LMI-const} implies boundedness of \(\{m_t\}\) for all \(t\).

\item Next, multiplying the inequality \eqref{udc:reform-LMI-const} on the left and right by \(\begin{bmatrix}B & -A\end{bmatrix}\) and its
transpose gives
\[
B S_t A^\top + A S_t^\top B^\top
\preceq
A\Sigma_t A^\top + B Y_t B^\top.
\]
Substituting this into \eqref{udc:subproblem-covariance-dynamics}, we obtain \(\Sigma_{t+1}
\preceq 2A\Sigma_t A^\top + 2B Y_t B^\top.\) Since \(\Sigma_1\) and \(Y_t\) are bounded, an induction argument shows that \(\{\Sigma_t\}\) is bounded for all \(t\).

\item Finally, from inequality \eqref{udc:reform-LMI-const} one has \(\|S_t\|^2\le \|Y_t\|\,\|\Sigma_t\|\), and consequently, \(\{S_t\}\) is bounded as well. Hence \(\fsblset_C\) is bounded, and therefore compact.
\end{itemize}
Thus \(f(\cdot)\) is sequentially coercive. Then together with the lower semicontinuity of \(f(\cdot)\) we conclude that, \(f\) attains its minimum on
\(\fsblset\). Therefore there exists \(z^\star\in\fsblset\) such that \(f(z^\star)=p^\star\) \cite[Proposition 1.2]{Ambrosio1988}.


For the optimization problem 
\[
\inf_{c> 0}\bigl(c\,p^\star+\psi(c)\bigr),
\]
the map \(c\mapsto c\,p^\star+\psi(c)\) is continuous on \([0,+\infty)\) and coercive as \(c\to+\infty\). Hence, there exists $R>0$ such that the infimum over $[0,+\infty)$ is equal to the minimum over the compact interval $[0,R]$. By the Weierstrass theorem, \cite[\S 1.2.1]{ref:santambrogio2023course} the infimum is attained at some $c^\star\ge 0$. Finally, for every $c\ge 0$ and every $z\in\mathcal{F}$
\begin{align*}
    c f(z)+\psi(c) &\ge c p^\star+\psi(c) \ge c^\star p^\star+\psi(c^\star) = c^\star f(z^\star)+\psi(c^\star).
\end{align*}
Hence, $(c^\star,z^\star)$ attains the infimum in~\eqref{eq:udc:total:problem}.
\end{proof}

Now, we can prove the following corollary, which ensures the existence of a solution for the UDC problem.

\begin{corollary}[Existence of a minimizer for the UDC problem]\label{cor:existence-udc}
   The Problem~\ref{prob:udc} admits an optimal solution.
\end{corollary}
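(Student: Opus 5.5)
The plan is to chain the reductions already established and then lift the finite-dimensional minimizer back to an admissible pair in \(\mathcal{U}\).

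\textbf{Step 1: lower bound.} Let \(V\) be the infimum of Problem~\ref{prob:udc}. Take any \((\pi_1,\mathbf U)\in\mathcal U\).
\begin{itemize}
\item If \(\pi_1=0\), the cost is \(\gamma(c_\alpha+c_\beta)\). This is the value of \eqref{eq:udc:total:problem} at \(c=0\), understood through the continuous extension of \(\psi\) used in the proof of Proposition~\ref{prop:existence-reduced-udc}.
\item If \(c>0\), Theorem~\ref{thm:udc_gaussian_restriction} gives a Gaussian initial measure and affine-Gaussian policy with no larger cost. Its cost is exactly the objective of \eqref{eq:udc-gauss-reduced-problem}, evaluated at the moment data \((c,m_t,\Sigma_t,\bar u_t,\Sigma_t^u,\Lambda_t)\) through Lemma~\ref{lemma:udc:moment-match}.
\end{itemize}
For the second case, the lifting \(S_t=\Lambda_t\), \(Y_t=\Sigma_t^u+\bar u_t\bar u_t^\top-v_tv_t^\top\) with \(v_t=\bar u_t\) gives \(Y_t=\Sigma_t^u\). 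Then \(\tr(Y_t)+\|v_t\|^2\) equals the second moment of the control. The block matrix in \eqref{udc:reform-LMI-const} is the joint covariance, so it is positive semidefinite. The recursions \eqref{udc:subproblem-LMI-const} and \eqref{udc:subproblem-covariance-dynamics} hold by the computation in the proof of Lemma~\ref{lemma:udc:moment-match}.

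If the KL terms are finite, then \(\Sigma_1,\Sigma_T\succ\textup O\), and the KL penalties equal \(c\gamma(M+S)+\psi(c)\). This uses the closed-form Gaussian KL, whose normalization I would reconcile with \eqref{eq:uot-def-of-m-c-psi}. Hence \(J_2(\pi_1,\mathbf U)\ge c f(z)+\psi(c)\ge c^\star f(z^\star)+\psi(c^\star)\), where \((c^\star,z^\star)\) is the minimizer from Proposition~\ref{prop:existence-reduced-udc}. This yields \(V\ge c^\star p^\star+\psi(c^\star)\).

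\textbf{Step 2: realize the minimizer.} Take \(z^\star\in\mathcal F\). Since \(f(z^\star)<\infty\), we have \(\Sigma^\star_1\succ\textup O\). Because \(A\) is nonsingular, I expect \(\Sigma^\star_t\succ\textup O\) for all \(t\). I would check this from \(\Sigma_{t+1}=(A+BK_t)\Sigma_t(A+BK_t)^\top+B\Sigma^u_tB^\top\); if it fails, I would use pseudoinverses instead.

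Set
\[
K_t=S_t^\star(\Sigma_t^\star)^{\dagger},\qquad \Sigma^u_t=Y_t^\star-K_t\Sigma_t^\star K_t^\top .
\]
The generalized Schur complement (as in the well-definedness of \eqref{eq:gaussian_affine_policy}) gives \(\Sigma^u_t\succeq\textup O\). It also gives \(S_t^\star=K_t\Sigma_t^\star\): the range condition \(S_t^\star(I-\Sigma_t^\star\Sigma_t^{\star\dagger})=0\) follows from positive semidefiniteness.

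Define
\[
\pi_1^\star=c^\star\calN(m_1^\star,\Sigma_1^\star),\qquad U_t^\star(\cdot\mid x)=\calN\bigl(v_t^\star+K_t(x-m_t^\star),\,\Sigma^u_t\bigr).
\]
The induced state measures are Gaussian with moments \(m_t^\star,\Sigma_t^\star\) by induction. All second moments are finite, so \((\pi_1^\star,\mathbf U^\star)\in\mathcal U\), and its cost equals \(c^\star f(z^\star)+\psi(c^\star)\). If \(c^\star=0\), take instead \(\pi_1=0\) with any policy, which attains \(\gamma(c_\alpha+c_\beta)\). Combining with Step 1, \(J_2(\pi_1^\star,\mathbf U^\star)=V\), so the infimum is attained.

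\textbf{Main obstacle.} The delicate point is exact bookkeeping rather than analysis. The reduced objective must coincide \emph{exactly} with \(J_2\) on Gaussian-affine pairs, including the constants in \(\psi\) and the \(c=0\) boundary case. The inverse lifting from \((S_t,Y_t)\) back to \((K_t,\Sigma^u_t)\) must also be exact. For the latter I would lean on the Schur-complement range argument.
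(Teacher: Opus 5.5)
Your proposal is correct and follows the paper's route. The paper takes the minimizer of the reduced problem (Proposition~\ref{prop:existence-reduced-udc}), realizes it as a Gaussian initial measure with an affine-Gaussian policy, and gets global optimality over $\mathcal U$ from Theorem~\ref{thm:udc_gaussian_restriction}. Your additional steps fill in bookkeeping the paper leaves implicit: the pseudoinverse inverse lifting, the check that the KL terms equal $c\gamma(M+S)+\psi(c)$ (matching \eqref{eq:udc:total:problem}), and the $c=0$ case. The only loose remark is your expectation that nonsingularity of $A$ forces $\Sigma_t^\star\succ\textup{O}$; it does not, since $A+BK_t$ can be singular, but your construction with $K_t=S_t^\star(\Sigma_t^\star)^{\dagger}$ never needs it.
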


\begin{proof}
By Proposition \ref{prop:existence-reduced-udc}, the problem \eqref{eq:udc:total:problem}, equivalently the Gaussian-reduced UDC problem \eqref{eq:udc-gauss-reduced-problem}, admits a minimizer. This minimizer determines a Gaussian initial measure and an affine-Gaussian control law. By Theorem \ref{thm:udc_gaussian_restriction}, such a Gaussian solution is optimal among all feasible solutions of Problem \ref{prob:udc}. Hence, Problem \ref{prob:udc} admits an optimal solution.
\end{proof}

\subsection{Optimization method for UDC and properties of optimal solution}

By the separation in \eqref{eq:udc:total:problem}, the optimization over the mass \(c\) can be carried out independently once the optimal value of the convex inner problem \eqref{eq:sdp_subproblem-convex-reform} is known. Let \(p^\star\) denote the optimal value of \eqref{eq:sdp_subproblem-convex-reform}. Then \eqref{eq:udc:total:problem} reduces to the scalar problem
\begin{align}
    \min_{c>0} \; c p^\star + \psi(c). \nn
\end{align}
Therefore, we obtain the following result, which is analogous to Corollary~\ref{cor:uot-optimal-c}. Since its proof follows the same argument, we omit the details. 
\begin{corollary}[Optimal mass for UDC problem]\label{cor:udc-optimal-c}
    Let \(p^\star\) the optimal value of \eqref{eq:sdp_subproblem-convex-reform} and let \(L:=\log\det \Sigma_\alpha+\log\det \Sigma_\beta-2d\). Then the optimal mass for UDC problem~\eqref{eq:udc} is 
    \begin{align}\label{eq:cstar_udc}
        c^\ast=\sqrt{c_\alpha c_\beta}\exp\!\left(-\frac{p^\star}{2\gamma}-\frac{L}{4}\right)
    \end{align}
and Algorithm~\ref{alg:UDC} computes a globally optimal solution to~\eqref{eq:udc}.
\end{corollary}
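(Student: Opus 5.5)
The formula for $c^\ast$ comes from minimizing the scalar function $g(c)\Let c\,p^\star+\psi(c)$ on $(0,+\infty)$, exactly as in Corollary~\ref{cor:uot-optimal-c}. Global optimality of Algorithm~\ref{alg:UDC} then comes from chaining three facts: the Gaussian reduction (Theorem~\ref{thm:udc_gaussian_restriction}), exactness of the lifting \eqref{eq:udc:change-variable}, and the separation in \eqref{eq:udc:total:problem}. Existence of $p^\star$ as an attained optimal value is already given by Proposition~\ref{prop:existence-reduced-udc}. I take $\psi$ literally as in \eqref{eq:uot-def-of-m-c-psi}, with the factor $\gamma$ already inside $\psi$.

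\textbf{Mass.} With $\phi_{c'}(c)=c\log(c/c')-c+c'$ we have
\[
g'(c)=p^\star+\tfrac{\gamma}{2}L+\gamma\log\tfrac{c}{c_\alpha}+\gamma\log\tfrac{c}{c_\beta},
\qquad g''(c)=\tfrac{2\gamma}{c}>0 .
\]
So $g$ is strictly convex on $(0,+\infty)$. Moreover $g'(c)\to-\infty$ as $c\downarrow0$ and $g'(c)\to+\infty$ as $c\to+\infty$, so there is a unique interior stationary point and it is the global minimizer. Solving $g'(c)=0$ gives
\[
\log\frac{c^2}{c_\alpha c_\beta}=-\frac{p^\star}{\gamma}-\frac{L}{2},
\]
which is \eqref{eq:cstar_udc}. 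In particular $c^\ast>0$, so the boundary point $c=0$ allowed in Proposition~\ref{prop:existence-reduced-udc} is never optimal.

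\textbf{Exactness of the lifting.} Next I show that the infimum of the bracketed term of \eqref{eq:udc-gauss-reduced-problem} over $(m_t,\Sigma_t,v_t,K_t,\Sigma^u_t)$ equals $p^\star$.

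In one direction, given $(K_t,\Sigma^u_t)$, set $S_t=K_t\Sigma_t$ and $Y_t=K_t\Sigma_tK_t^\top+\Sigma^u_t$. Then \eqref{udc:subproblem-covariance-dynamics} reproduces \eqref{eq:udc-cov-recursion}, and $\tr Y_t$ equals the control-covariance cost $\tr(K_t\Sigma_tK_t^\top)+\tr(\Sigma^u_t)$. The LMI \eqref{udc:reform-LMI-const} holds because the block matrix equals
\[
\begin{bmatrix}K_t\\ I\end{bmatrix}\Sigma_t\begin{bmatrix}K_t\\ I\end{bmatrix}^\top+\operatorname{diag}(\Sigma^u_t,\textup{O})\succeq\textup{O}.
\]

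Conversely, given a feasible lifted point, define $K_t\Let S_t\Sigma_t^\dagger$ and $\Sigma^u_t\Let Y_t-S_t\Sigma_t^\dagger S_t^\top$. This is the step I expect to be the main obstacle, because $\Sigma_t$ may be singular for intermediate $t$. The tool is the generalized Schur complement \cite[Theorem 16.1]{Gallier2011}. Positive semidefiniteness of the block matrix yields the range condition $S_t(I-\Sigma_t\Sigma_t^\dagger)=\textup{O}$, hence $K_t\Sigma_t=S_t$. It also yields $\Sigma^u_t\succeq\textup{O}$. With these, $K_t\Sigma_tK_t^\top+\Sigma^u_t=Y_t$, so both the covariance recursion and the cost are reproduced exactly.

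The endpoint terms need $\Sigma_1,\Sigma_T\succ\textup{O}$. This holds at any point of finite cost, since $S(\Sigma_1,\Sigma_T)=+\infty$ otherwise, so the Gaussian endpoint measures are well defined.

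\textbf{Conclusion.} Let $z^\star$ be a minimizer of \eqref{eq:sdp_subproblem-convex-reform}, which is a convex SDP, and let $c^\ast$ be given by \eqref{eq:cstar_udc}. By the separation argument in Proposition~\ref{prop:existence-reduced-udc}, $(c^\ast,z^\star)$ minimizes \eqref{eq:udc:total:problem}. Recovering $K_t,\Sigma^u_t$ as above gives a minimizer of \eqref{eq:udc-gauss-reduced-problem}, which is the output of Algorithm~\ref{alg:UDC}. It defines the pair $(c^\ast\calN(m_1,\Sigma_1),\{U_t\})$ with $U_t(\cdot|x)=\calN(K_t(x-m_t)+v_t,\Sigma^u_t)$. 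By Theorem~\ref{thm:udc_gaussian_restriction}, every feasible solution of Problem~\ref{prob:udc} has cost at least that of some Gaussian-affine pair, and hence at least the value attained here. So this pair is globally optimal for \eqref{eq:udc}.
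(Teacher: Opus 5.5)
Your proposal is correct and takes the same route as the paper. The paper obtains $c^\ast$ by noting that $c\mapsto c\,p^\star+\psi(c)$ is strictly convex on $(0,+\infty)$ and differentiating, exactly as in Corollary~\ref{cor:uot-optimal-c}, and it rests global optimality of Algorithm~\ref{alg:UDC} on Theorem~\ref{thm:udc_gaussian_restriction}, the lifting, and the separation of the mass variable; you add details the paper leaves implicit, namely the two-sided exactness of the lifting via the generalized Schur complement when $\Sigma_t$ is singular, and reading the mass term as $\psi(c)$ (with $\gamma$ inside $\psi$) rather than the $\gamma\psi(c)$ written in the reduced problem, which is the reading that produces the stated $p^\star/(2\gamma)$.
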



\begin{algorithm2e}[t]
\caption{Unbalanced Density Control between Gaussians.}
\label{alg:UDC}
\DontPrintSemicolon
\KwIn{Gaussians $\alpha=c_{\alpha}\mathcal N(m_\alpha,\Sigma_\alpha)$, $\beta=c_{\beta}\mathcal N(m_\beta,\Sigma_\beta)$; \(\gamma \geq 0\), \(T\in\bbN\), \(A\in\R^{d\times d}\), \(B\in\R^{d\times d'}\) }
\KwOut{Global Solution \(\pi_1^*\), \(\{U_t\}_{t=1}^{T-1}\).}
  \textbf{Step 1:} Covariance steering (SDP and QP) \\
  \quad Solve \eqref{eq:sdp_subproblem-convex-reform},\,
  \(p^{\star} \longleftarrow \text{Optimal Value of \eqref{eq:sdp_subproblem-convex-reform}}\) \\
  \textbf{Step 2:} Mass computation (no optimization)
  \begin{align*}
      c^* \longleftarrow &\sqrt{c_{\alpha}c_{\beta}}\exp\Bigl(-\frac{p^{\star}}{2\gamma}-\frac{L}{4}\Bigr), \\&\text{with }L:=\log\det \Sigma_\alpha+\log\det \Sigma_\beta-2d)
    \end{align*}
\end{algorithm2e}

While the SDP reformulation above is already sufficient to compute a globally optimal solution, we now give a structural sufficient condition for deterministic optimality. In particular, one may ask whether the optimal affine-Gaussian policy requires an additional control covariance term \(\Sigma_t^{u}\), or whether deterministic affine feedback already suffices. The next theorem shows that, if the optimal state covariance sequence is positive definite throughout the horizon, then the corresponding optimal affine-Gaussian policy is deterministic.


\begin{theorem}\label{thm:sigma-psd-and-deterministic-policy}
Consider the problem~\eqref{eq:sdp_subproblem-convex-reform}. If the optimal \(\Sigma_t\) is positive-definite for all \(t=1,...,T\), the optimal policy in~\eqref{eq:sdp_subproblem-convex-reform}  is deterministic; that is, the optimal solution satisfies
    \begin{align}\label{eq:sigma-t-u-zero}
        \Sigma_t^u = Y_t - S_t \Sigma_t^{-1} S_t^\top = \nullmat, \quad\mbox{for }  t=1,\ldots,T-1.
    \end{align}
\end{theorem}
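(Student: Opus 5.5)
We argue by contradiction via a strict improvement. Suppose $z^\star=(\{m_t,\Sigma_t\},\{v_t,S_t,Y_t\})$ is optimal for \eqref{eq:sdp_subproblem-convex-reform} with $\Sigma_t\succ\nullmat$ for all $t$. Let $s$ be the \emph{last} index with $D_s\Let Y_s-S_s\Sigma_s^{-1}S_s^\top\neq\nullmat$; by the Schur complement, \eqref{udc:reform-LMI-const} together with $\Sigma_s\succ\nullmat$ gives $D_s\succeq\nullmat$. The plan is to build a feasible point with strictly smaller cost by removing the control noise at time $s$ and compensating in the feedback gains at later times. The compensation is needed because the terminal covariance must be kept unchanged: the term $-\log\det\Sigma_T$ in $S(\Sigma_1,\Sigma_T)$ penalizes shrinking it, so we cannot simply drop the noise.

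\textbf{Step 1 (kill the noise at time $s$).} Replace $Y_s$ by $\tilde Y_s\Let S_s\Sigma_s^{-1}S_s^\top=Y_s-D_s$ and keep $S_s$. The LMI still holds with zero Schur complement, and the cost decreases by $\tr D_s>0$. By \eqref{udc:subproblem-covariance-dynamics}, the new covariance is $\tilde\Sigma_{s+1}=\Sigma_{s+1}-BD_sB^\top\preceq\Sigma_{s+1}$, and $\tilde\Sigma_{s+1}$ is still positive definite because $\tilde\Sigma_{s+1}=(A+BK_s)\Sigma_s(A+BK_s)^\top$ with $K_s=S_s\Sigma_s^{-1}$.

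\textbf{Step 2 (steer back with deterministic feedback).} This is the main obstacle. The aim is to choose gains $\tilde K_t$ for $t=s+1,\dots,T-1$, with $\tilde\Sigma_t\succ\nullmat$ and zero noise, so that $\tilde\Sigma_T=\Sigma_T$, while the total added control cost $\sum_{t>s}\bigl(\tr(\tilde K_t\tilde\Sigma_t\tilde K_t^\top)-\tr(K_t\Sigma_tK_t^\top)\bigr)$ stays strictly below $\tr D_s$.

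The idea is to realize the original random input $w_s$ by a correlated deterministic feedback. We view the original system, from time $s$ onward, as a Gaussian trajectory whose inputs after $s$ are deterministic affine functions of $x_t$, by the choice of $s$. Define the new trajectory by the invertible map $\tilde x_{t}=x_t-\Phi_t w_s$, where $\Phi_t$ is the transition matrix of the closed-loop original system from $s+1$ to $t$. Concretely, $x_{s+1}=\tilde x_{s+1}+Bw_s$. Since $A$ is nonsingular, the closed-loop maps $A+BK_t$ for $t>s$ can be assumed invertible after a perturbation argument, or we work directly at the level of joint laws. A cleaner way to carry this out is a covariance-matching construction. Since $\tilde\Sigma_{s+1}\succ\nullmat$ and $\Sigma_{s+1}\succeq\tilde\Sigma_{s+1}$, choose $\tilde K_{s+1}$ so that $(A+B\tilde K_{s+1})\tilde\Sigma_{s+1}(A+B\tilde K_{s+1})^\top=\Sigma_{s+2}$. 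If this is not reachable in one step, use a symmetric square-root gain $\tilde K_{s+1}=K_{s+1}G$ with $G=\Sigma_{s+1}^{1/2}\tilde\Sigma_{s+1}^{-1/2}$ (up to an orthogonal factor) chosen to match $\Sigma_{s+1}$ in the least-cost way.

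\textbf{Step 3 (compare costs and conclude).} The cost comparison should reduce to a strict inequality of the form $\tr(\cdot)<\tr D_s$. If the direct construction proves unwieldy, I would instead use the KKT conditions of the SDP. Let $\begin{bmatrix}P_t&Q_t\\Q_t^\top&R_t\end{bmatrix}\succeq\nullmat$ be the multiplier of \eqref{udc:reform-LMI-const} and $\Lambda_{t+1}$ the multiplier of the covariance dynamics. Stationarity in $Y_t$ gives $P_t=I-B^\top\Lambda_{t+1}B$, and complementary slackness gives
$$\begin{bmatrix}P_t&Q_t\\Q_t^\top&R_t\end{bmatrix}\begin{bmatrix}Y_t&S_t\\S_t^\top&\Sigma_t\end{bmatrix}=\nullmat.$$
Stationarity in $\Sigma_t$ for $1<t<T$ is $\Lambda_t=A^\top\Lambda_{t+1}A-R_t$, and at $t=T$ it is $\Lambda_T=-\tfrac{\gamma}{2}(\Sigma_\beta^{-1}-\Sigma_T^{-1})$. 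Since $\Sigma_t\succ\nullmat$ for all $t$, the $\log\det$ barrier is inactive, and complementarity forces the multiplier to have rank at least $d'$ on the $Y$-block. This gives $P_t\succ\nullmat$ on the relevant range, and then $P_tY_t+Q_tS_t^\top=\nullmat$ and $P_tS_t+Q_t\Sigma_t=\nullmat$ yield $Y_t=-P_t^{-1}Q_tS_t^\top=S_t\Sigma_t^{-1}S_t^\top$, i.e.\ \eqref{eq:sigma-t-u-zero}. The crux of this route is showing $P_t\succ\nullmat$. I would derive it from the rank relation: the multiplier matrix times the primal matrix is zero, and the primal matrix has rank at least $d$ since $\Sigma_t\succ\nullmat$. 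If $P_t$ were singular, a vector $\xi\in\ker P_t$ would give a direction $Y_t\mapsto Y_t-\epsilon\,\eta\eta^\top$ along the noise that is feasible and decreases $\tr Y_t$. I expect pinning this step down to be the main difficulty.
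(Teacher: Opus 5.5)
\textbf{The crux of Route~2 is not established.} Your Route~1 is not yet a proof. The compensating gains of Step~2 are never actually constructed: $\tilde K_{s+1}=K_{s+1}G$ is not shown to reproduce $\Sigma_{s+2}$, and in general it does not, since $G$ acts on the feedback term but not on the drift $A$. The decisive inequality ``added cost $<\tr D_s$'' is asserted rather than derived. And for $s=T-1$ there is no later step to compensate with.

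So everything rests on Route~2. Write $M_t$ for your multiplier $\begin{bmatrix}P_t&Q_t\\Q_t^\top&R_t\end{bmatrix}$ and $X_t$ for the LMI matrix. The skeleton is right: this is the KKT argument the paper uses, and $P_t\succ\textup{O}$ together with your block equations $P_tY_t+Q_tS_t^\top=\textup{O}$, $P_tS_t+Q_t\Sigma_t=\textup{O}$ does give $P_tD_t=\textup{O}$, hence $D_t=\textup{O}$. But you have not shown $P_t\succ\textup{O}$, and both justifications you offer fail.
\begin{enumerate}
\item Complementary slackness $M_tX_t=\textup{O}$ with $\operatorname{rank}X_t\ge d$ only yields $\operatorname{rank}M_t\le d'$. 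That is an \emph{upper} bound on the multiplier's rank, so it cannot make $P_t$ nonsingular.
\item The perturbation $Y_t\mapsto Y_t-\epsilon\eta\eta^\top$ changes $\Sigma_{t+1}$ through \eqref{udc:subproblem-covariance-dynamics}, hence all later covariances and the terminal cost. It is therefore not a cost-decreasing feasible direction in general.
\end{enumerate}

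\textbf{Route~2 never uses that $A$ is nonsingular, and without that the conclusion is false.} Take $d=d'=1$, $T=2$, $A=0$, $B=1$. The dynamics read $\Sigma_2=Y_1$, and $S_1$ enters only the LMI. So $S_1=0$, $\Sigma_1=\Sigma_\alpha$, with $Y_1>0$ the minimizer of $Y+\tfrac{\gamma}{2}(Y\Sigma_\beta^{-1}-\log Y)$, is optimal. It has $\Sigma_1,\Sigma_2\succ\textup{O}$ but $D_1=Y_1\neq 0$. There $X_1\succ\textup{O}$ forces $M_1=\textup{O}$, so $P_1=0$. The same example shows that a Route~1-type strict improvement must use $\det A\neq 0$ quantitatively: replacing the noise by feedback ($S_1=\sqrt{Y_1\Sigma_1}$) leaves the cost exactly unchanged.

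\textbf{The missing ingredient is stationarity with respect to $S_t$}, which you never write down. In your sign convention (the one giving $P_t=I-B^\top\Lambda_{t+1}B$) it reads $Q_t=-B^\top\Lambda_{t+1}A$.
\begin{enumerate}
\item Suppose $P_t\xi=0$. Then $M_t\succeq\textup{O}$ forces $Q_t^\top\xi=0$, i.e.\ $A^\top\Lambda_{t+1}B\xi=0$.
\item Invertibility of $A$ gives $\Lambda_{t+1}B\xi=0$.
\item Then $0=P_t\xi=\xi-B^\top\Lambda_{t+1}B\xi=\xi$.
\end{enumerate}
Hence $P_t\succ\textup{O}$, and your conclusion follows.

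The paper uses the same two stationarity conditions (in $Y_t$ and $S_t$) but skips $P_t\succ\textup{O}$. The lower block row of complementary slackness, after the factorization available because $\Sigma_t\succ\textup{O}$, gives $Q_t^\top D_t=\textup{O}$. Invertibility of $A$ then gives $\Lambda_{t+1}BD_t=\textup{O}$, and so $\textup{O}=P_tD_t=D_t$. Your $\Sigma_t$-stationarity conditions and the remark about the $\log\det$ barrier play no role.

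Finally, to invoke KKT you need multipliers to exist. Slater's condition holds: for example, $\Sigma_1=I$, $S_t=\textup{O}$, $Y_t=I$ makes every LMI strict. The paper also leaves this implicit.
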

\begin{proof}
To prove \eqref{eq:sigma-t-u-zero}, we consider the Karush-Kuhn-Tucker condition. Let us denote the Lagrange multiplier for the constraint \eqref{udc:subproblem-covariance-dynamics} by \(L_t\) and the one for \eqref{udc:subproblem-LMI-const} by \(M_t\). We utilize the following structure
    \begin{align*}
        M_t = \dmat{M_t^{00}}{M_t^{01}}{M_t^{10}}{M_t^{11}}.
    \end{align*}
    The complementary slackness condition of the LMI constraint is given by
    \begin{align}\label{eq:complementary-slackness}
        \dmat{M_t^{00}}{M_t^{01}}{M_t^{10}}{M_t^{11}}\dmat{Y_{t}}{S_{t}}{S_{t}^{\top}}{\Sigma_{t}}=\textup{O}.
    \end{align}
    From the stationary condition with respect to \(Y_t\) and \(S_t\), the following holds
    \begin{subequations}
        \begin{align}
            &B^{\top}L_t A + M_t^{01} = 0,\label{eq:stationary-cond-Y_t} \\
            &I - B^{\top} L_t B + M_t^{00} =0.\label{eq:stationary-cond-S_t}
        \end{align}
    \end{subequations}
    Under \(\Sigma_t\succ \textup{O}\), the following factorization holds
    \begin{equation}\label{eq:LMI-factorization}
        \begin{aligned}
            &\dmat{Y_{t}}{S_{t}}{S_{t}^{\top}}{\Sigma_{t}}  = \dmat{I}{S_{t}\Sigma_t^{-1}}{\textup{O}}{I}\dmat{Y_t - S_t\Sigma_t^{-1}}{\textup{O}}{\textup{O}}{\Sigma_t}\dmat{I}{\textup{O}}{\Sigma_t^{-1}}{I}
        \end{aligned}
    \end{equation}
    Plugging this to \eqref{eq:complementary-slackness}, we obtain
    \begin{align*}
        \dmat{M_t^{00}}{M_t^{01}}{M_t^{10}}{M_t^{11}}\dmat{I}{S_{t}\Sigma_t^{-1}}{\textup{O}}{I}\dmat{Y_t - S_t\Sigma_t^{-1}}{\textup{O}}{\textup{O}}{\Sigma_t} = \textup{O}
    \end{align*}
    Therefore, it holds that
    \begin{subequations}
        \begin{align}
            &M_t^{00}\left(Y_t - S_t\Sigma_t^{-1}S_t^\top\right) = \textup{O}\label{eq:m00} \\
            &(M_t^{01})^\top \left(Y_t - S_t\Sigma_t^{-1}S_t^\top\right) = \textup{O}\label{eq:m01}
        \end{align}
    \end{subequations}
    Combining \eqref{eq:stationary-cond-Y_t}, \eqref{eq:m01}, and the invertibility of \(A\), we get
    \begin{align}\label{eq:control-noise-zero-step}
        L_t B \left(Y_t - S_t \Sigma_t^{-1} S_t^\top\right) = \textup{O}.
    \end{align}
    From \eqref{eq:stationary-cond-S_t}, \eqref{eq:m00}, and \eqref{eq:control-noise-zero-step}, we finally obtain
    \begin{equation*}
        \begin{aligned}
            &\left(I - B^\top L_t B\right)\left(Y_t - S_t^\top \Sigma_t^{-1}S_t\right)  = Y_t - S_t^\top \Sigma_t^{-1}S_t =\textup{O}
        \end{aligned}
    \end{equation*}
    The proof is complete.
\end{proof}
This result is analogous to the one that deterministic affine policy is optimal for the UOT problem.



\section{MaxEnt UDC}
\label{sec:maxent_udc}
We conclude our theoretical developments by considering a maximum-entropy variant of the proposed UDC formulation. The maximum-entropy viewpoint \cite{Ito2023} augments the quadratic control objective with an entropy reward on the control policy, thereby encouraging randomized policies with higher dispersion while preserving the prescribed moment-level steering structure. In this section, we introduce a MaxEnt UDC problem, formulated as a UDC problem with an additional entropy-regularization term. We keep all the assumptions and data from Problem \ref{prob:udc} (with the obvious modifications)  and consider the following problem.
\begin{problembox}[\,(MaxEnt UDC)]\label{prob:maxent_udc} 
Let the horizon \(T\in\bbN\) be fixed and assume that the admissible set \(\mathcal{U}\) defined in \S\ref{subsec:dynamical:formulation} is nonempty. Let \(\eps>0\) and \(\gamma>0\); given the preceding problem data, find an optimal initial measure \(\pi_1\in\calM_2^+(\Rd)\) and control policy \(\{U_t\}_{t=1}^{T-1}\) that solves
    \begin{equation}
        \begin{aligned}
              & \hspace{-3mm}\inf_{\pi_1,\,\{U_t\}_{t=1}^{T-1}}   \exb{\sum_{t=1}^{T-1}\left(\|u_t\|^2 - \eps\ent{U_t(\cdot | x_t)}\right) } \nn  \\
            & \hspace{-2mm}+ \gamma \KL{\pi_1}{\alpha}+ \gamma \KL{\pi_T}{\beta}  =: J_{4}(\pi_1,\{U_t\}_{t=1}^{T-1})  
        \end{aligned}
    \end{equation}  
\end{problembox}
\subsection{Gaussian reduction for MaxEnt UDC}
Using our previous developments we develop a Gaussian reduction for Problem \ref{prob:maxent_udc}.

\begin{theorem}[Gaussian reduction for MaxEnt UDC]\label{thm:gauss-optimal-maxent-udc}
    Let \((\pi_1,\{U_t\}_{t=1}^{T-1})\) be any feasible solution to Problem \ref{prob:maxent_udc}. As per Lemma~\ref{lemma:udc:moment-match}, let \((\pi_1^{\rm G},\{U_t^{\rm G}\}_{t=1}^{T-1})\) be the Gaussian solution constructed from the first and second moment of \((\pi_1,\{U_t\}_{t=1}^{T-1})\). Then the following inequality holds
    \begin{align*}
        J_{4}\bigl(\pi_1,\{U_t\}_{t=1}^{T-1}\bigr) \geq J_{4}\bigl(\pi_1^{\rm G},\{U_t^{\rm G}\}_{t=1}^{T-1}\bigr).
    \end{align*}
\end{theorem}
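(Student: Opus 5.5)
The plan is to split $J_4$ into the part already handled by Theorem~\ref{thm:udc_gaussian_restriction} and the new entropy reward, and to show that the Gaussian replacement does not decrease the entropy reward. We write
\[
J_4\bigl(\pi_1,\{U_t\}\bigr)=J_2\bigl(\pi_1,\{U_t\}\bigr)-\eps\sum_{t=1}^{T-1}\int_{\Rd}\ent{U_t(\cdot\mid x)}\,\rmd\pi_t(x).
\]
The control cost and the two KL penalties in $J_2$ do not increase under the replacement, by Theorem~\ref{thm:udc_gaussian_restriction}. That theorem rests on Lemma~\ref{lemma:udc:moment-match} and Proposition~\ref{prop:gauss-min-kl}, so this part needs nothing new. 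It therefore suffices to prove, for each $t$,
\[
\int_{\Rd}\ent{U_t(\cdot\mid x)}\,\rmd\pi_t(x)\;\le\;\int_{\Rd}\ent{U^{\rm G}_t(\cdot\mid x)}\,\rmd\pi^{\rm G}_t(x).
\]
Both sides carry the same total mass $c$, so after normalization this is a statement about $p_t$ and $p_t^{\rm G}$.

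\textbf{Right-hand side.} By \eqref{eq:gaussian_affine_policy}, $U_t^{\rm G}(\cdot\mid x)$ is Gaussian with covariance $\Sigma^{\rm c}_t:=\Sigma^u_t-\Lambda_t\Sigma_t^\dagger\Lambda_t^\top$, which does not depend on $x$. Hence the right-hand side equals
\[
\tfrac{c}{2}\log\det\bigl(2\pi e\,\Sigma^{\rm c}_t\bigr).
\]

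\textbf{Left-hand side.} We bound it in three steps.
\begin{enumerate}
\item For $\pi_t$-a.e.\ $x$, apply the maximum-entropy property of Gaussians (\cite[Theorem 8.6.5]{Cover2006}, as in Proposition~\ref{prop:gauss-min-kl}) to the conditional law. With $C(x):={\rm Cov}(u_t\mid x_t=x)$, this gives $\ent{U_t(\cdot\mid x)}\le \frac12\log\det(2\pi e\,C(x))$.
\item Integrate against $\bar\pi_t$ and use concavity of $\log\det$ (Jensen's inequality). This bounds the left-hand side by $\frac{c}{2}\log\det\bigl(2\pi e\,\bbE_{p_t}[C(x_t)]\bigr)$.
\item Show $\bbE_{p_t}[C(x_t)]\preceq\Sigma^{\rm c}_t$. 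By the law of total covariance, $\bbE[C(x_t)]=\Sigma^u_t-{\rm Cov}(\bbE[u_t\mid x_t])$. It remains to prove ${\rm Cov}(\bbE[u_t\mid x_t])\succeq\Lambda_t\Sigma_t^\dagger\Lambda_t^\top$. Write $g(x):=\bbE[u_t\mid x_t=x]-\bar u_t$. Since ${\rm Cov}(g(x_t),x_t)={\rm Cov}(u_t,x_t)=\Lambda_t$, the joint covariance matrix of $(g(x_t),x_t)$ is positive semidefinite. Its generalized Schur complement \cite[Theorem 16.1]{Gallier2011} then gives the inequality. This is the statement that the best linear predictor has larger error covariance than the conditional mean.
\end{enumerate}
Monotonicity of $\log\det$ on the positive semidefinite cone then closes the chain.

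The main obstacle is the degenerate cases in the entropy terms, and I would treat them before the chain of inequalities.
\begin{itemize}
\item If $U_t(\cdot\mid x)$ has no Lebesgue density on a set of positive $\pi_t$-measure, its entropy is $-\infty$ and the entropy reward makes $J_4=+\infty$, so the inequality is trivial. Thus we may assume densities, and hence $C(x)\succ\textup{O}$, almost everywhere.
\item If $\Sigma^{\rm c}_t$ were singular, then step 3 would make $\bbE[C(x_t)]$ singular. This is impossible when $C(x)\succ\textup{O}$ almost everywhere, so under finiteness $\Sigma^{\rm c}_t\succ\textup{O}$ and the Gaussian policy has a genuine density.
\end{itemize}
I would also check that Jensen's inequality is legitimate, i.e.\ that the integrals are well defined. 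Finite second moments give $\bbE[C]$ finite, and the upper bound from step 1 is integrable from above, so this should be fine. Summing over $t$ and adding the bound from Theorem~\ref{thm:udc_gaussian_restriction} completes the proof.
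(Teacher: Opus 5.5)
Your proposal is correct. The outer structure is exactly the paper's: you write $J_4=J_2-\eps\,\bbE\bigl[\sum_t H(U_t(\cdot\mid x_t))\bigr]$ and invoke Theorem~\ref{thm:udc_gaussian_restriction} for the $J_2$ part. The difference lies in how the entropy terms are compared.

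The paper shifts the control by the linear predictor, $\bar U_t(u\mid x):=U_t(u+\Lambda_t\Sigma_t^\dagger x\mid x)$. By translation invariance, the conditional entropy is then that of the residual $V_t:=u_t-\Lambda_t\Sigma_t^\dagger x_t$ given $x_t$. It then drops the conditioning ($H(V_t\mid x_t)\le H(V_t)$). Finally it applies the maximum-entropy property of Gaussians once, to the unconditional law of $V_t$, whose covariance is exactly $\Sigma_t^u-\Lambda_t\Sigma_t^\dagger\Lambda_t^\top$ (using $\Sigma_t^\dagger\Sigma_t\Sigma_t^\dagger=\Sigma_t^\dagger$).

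You instead apply maximum entropy to each conditional law, move the average inside $\log\det$ by concavity, and bound the averaged conditional covariance via the law of total covariance and the generalized Schur complement. Your step~3, $\bbE[C(x_t)]\preceq\Sigma^{\rm c}_t$, is the covariance-level counterpart of the paper's ``conditioning does not increase entropy.'' It also follows in one line from the law of total covariance applied to $V_t$, since ${\rm Cov}(V_t\mid x_t)=C(x_t)$ and ${\rm Cov}(V_t)=\Sigma^{\rm c}_t$; that would let you skip the Schur-complement argument for $g$.

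What each route buys:
\begin{itemize}
\item The paper's route is shorter: one max-entropy application, no Jensen step, and no conditional-mean function to handle. It relies on the information-theoretic inequality $H(V\mid X)\le H(V)$ and passes over degenerate situations in silence.
\item Your route reduces everything to elementary matrix inequalities. It makes the three sources of slack visible: non-Gaussian conditionals, $x$-dependence of the conditional covariance, and nonlinearity of $\bbE[u_t\mid x_t]$. It also explicitly handles the cases where the conditionals lack densities or $\Sigma_t^u-\Lambda_t\Sigma_t^\dagger\Lambda_t^\top$ is singular, which the paper does not address.
\end{itemize}
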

\begin{proof}
    Note that \(J_4 = J_2 - \eps\bbE\left[\sum_{t=1}^{T-1}\ent{U_t(\cdot | x_t)}\right]\). Also, from Theorem~\ref{thm:udc_gaussian_restriction}, we have \(J_{2}\bigl(\pi_1,\{U_t\}_{t=1}^{T-1}\bigr) \geq J_{2}\bigl(\pi_1^{\rm G},\{U_t^{\rm G}\}_{t=1}^{T-1}\bigr)\). Therefore, it suffices to verify that
    \begin{align*}
        \bbE\left[\sum_{t=1}^{T-1}\ent{U_t(\cdot | x_t)}\right] \leq \bbE\left[\sum_{t=1}^{T-1}\ent{U_t^{\rm G}(\cdot | x_t)}\right].
    \end{align*}
    For each \(t=1,...,T-1\), let us the control policy \(\bar{U}_t(u|x) := U_t(u+\Lambda_t\Sigma_t^\dagger x|x),\)
    where \(\Lambda_t\) is defined in \eqref{eq:udc:lem:moment-match-cov}. Then it holds that
    \begin{align*}
        \bbE_{x_t\sim\pi_t}\left[\ent{U_t(\cdot | x_t)}\right] &=  \bbE_{x_t\sim\pi_t}\left[\ent{\bar{U}_t(\cdot | x_t)}\right] \leq \bbE_{x_t\sim\pi_t}\left[\ent{\bar{U}_t}\right]  = c\, \ent{\bar{U}_t}.
    \end{align*}
    The equality holds since \(\bar{U}_t(\cdot|x)\) is a shift of \(U_t(\cdot|x)\) under fixed variable \(x\), and the inequality is valid since the removing the conditioning does not increase entropy. Also, \(\bar{U}_t\) and \(U_t^{\rm G}(\cdot|x)\) have the same covariance and it is well-known that Gaussian attains maximum entropy under fixed covariance, thus  \(c\,\ent{\bar{U}_t}\leq c\,\ent{U_t^{\rm G}(\cdot|x)}  = \bbE_{x_t\sim\pi_t^{\rm G}}\left[\ent{U_t^{\rm G}(\cdot | x_t)}\right].\) Therefore, we obtain \(\bbE_{x_t\sim\pi_t}\left[\ent{U_t(\cdot | x_t)}\right] \leq \bbE_{x_t\sim\pi_t^{\rm G}}\left[\ent{U_t^{\rm G}(\cdot | x_t)}\right].\) Summing up both sides from \(t=1\) to \(T-1\), we obtain the desired inequality, which finishes the proof.
\end{proof}

\subsection{Equivalent reformulation of MaxEnt UDC}
Employing Theorem~\ref{thm:gauss-optimal-maxent-udc} we can transform Problem \ref{prob:maxent_udc} into the following finite dimensional optimization problem:
\begin{equation}\label{eq:maxent-gauss}
    \begin{aligned}
        &\inf_{c,m_t,\Sigma_t,v_t,K_t,\Sigma^u_t} \,\,  c\Biggl[ \sum_{t=1}^{T-1}\Bigl(\|v_t\|^2 + \tr(K_t\Sigma_t K_t^\top) + \tr(\Sigma^u_t) - \frac{\eps}{2} \log\det(\Sigma_t^u)\Bigr)   \\
        & \qquad\qquad\qquad\qquad + \gamma M(m_1,m_T) + \gamma S(\Sigma_1,\Sigma_T) \Biggr] + \gamma \psi(c) \\
        &\qquad\,\, \text{s.t.}\,\,\, m_{t+1}=A m_t + B v_t,\\
        & \qquad\qquad\,\,\Sigma_{t+1}=(A+BK_t)\Sigma_t(A+BK_t)^\top + B\Sigma^u_tB^\top \,\text{ for }t=1,\dots,T-1.
    \end{aligned}
\end{equation}
Since the objective function and the covariance recursion contains bilinear terms, we perform a change of variables by setting \(S_t \Let K_t\Sigma_t\) and \(Y_t \Let K_t \Sigma_t K_t^\top.\) Through this manipulation, \eqref{eq:maxent-gauss} admits the following equivalent reformulation
\begin{subequations}\label{eq:maxent-reform}
    \begin{align}
        &\inf_{c,m_t,\Sigma_t,\Sigma_t^{u},v_t,S_t,Y_t} \,\,  c\Biggl[\sum_{t=1}^{T-1}\Bigl(\|v_t\|^2 + \tr(Y_t) + \tr(\Sigma_t^{u}) -\eps \log\det\Sigma_t^u\Bigr) + \gamma M(m_1,m_T)  \nonumber\\
        &\qquad\qquad\qquad\qquad\,+ \gamma S(\Sigma_1,\Sigma_T) \Biggr] + \gamma \psi(c)\nonumber\\
        &\qquad\,\, \text{s.t.}\,\,\, m_{t+1}=A m_t + B v_t,\nonumber\\
        & \qquad\qquad\,\Sigma_{t+1} = A\Sigma_t A^\top + BS_tA^\top + AS_t^\top B^\top+ B(Y_t+\Sigma_t^u)B^\top \label{eq:maxent:subproblem-covariance-dynamics}\\
        &\qquad\qquad\, Y_t = S_t \Sigma_t^{-1} S_t^\top \,\text{for }t=1,\dots,T-1.\label{eq:maxent:invertibility-cond}
    \end{align}
\end{subequations}
The constraint \eqref{eq:maxent:invertibility-cond} ensures the invertibility of the transformation. However, \eqref{eq:maxent-reform} is still a nonconvex problem due to \eqref{eq:maxent:invertibility-cond}. Now, consider the relaxation of \eqref{eq:maxent:invertibility-cond} via the condition \(Y_t \succeq S_t \Sigma_t^{-1} S_t^\top.\) Observe that, the preceding inequality can be written as a convex formulation via Schur complement as:
\begin{align}\label{eq:maxent:shur}
    \dmat{Y_{t}}{S_{t}}{S_{t}^{\top}}{\Sigma_{t}} \succeq \textup{O}.
\end{align}
Finally, by replacing \eqref{eq:maxent:invertibility-cond} with \eqref{eq:maxent:shur}, we obtain a relaxation of \eqref{eq:maxent-reform}: 
\begin{subequations}\label{eq:maxent-relaxation}
    \begin{align}
        &\inf_{c,m_t,\Sigma_t,\Sigma_t^{u},v_t,S_t,Y_t} \,\,  c\Biggl[\sum_{t=1}^{T-1}\Bigl(\|v_t\|^2 + \tr(Y_t) +  +\tr(\Sigma_t^{u}) -\eps \log\det\Sigma_t^u\Bigr) + \gamma M(m_1,m_T)  \nonumber\\
        &\qquad\qquad\qquad\qquad\,+ \gamma S(\Sigma_1,\Sigma_T) \Biggr] + \gamma\psi(c)\nonumber\\
        &\qquad\,\, \text{s.t.}\,\,\, m_{t+1}=A m_t + B v_t,\label{eq:maxent-udc-relaxed-mean-dynamics}\\
        & \qquad\qquad\,\Sigma_{t+1} = A\Sigma_t A^\top + BS_tA^\top + AS_t^\top B^\top  + B(Y_t+\Sigma_t^u)B^\top \label{eq:maxent-udc-relaxed-cov-dynamics}\\
        &\qquad\qquad\, \dmat{Y_{t}}{S_{t}}{S_{t}^{\top}}{\Sigma_{t}} \succeq \textup{O} \,\text{for } t=1,\dots,T-1 \label{eq:maxent-udc-relaxed-LMI-constraint}.
    \end{align}
\end{subequations}
We are now ready to show the existence results.

\subsection{Existence of minimizer}\label{subsec:existence:MaxEnt}
To complete the existence argument for Problem \ref{prob:maxent_udc}, it remains to show that the relaxed finite-dimensional problem~\eqref{eq:maxent-relaxation} admits a minimizer. Once this is established, we can combine it with the tightness of the relaxation and the Gaussian reduction result in Theorem \ref{thm:maxent-relax-tight} ahead to conclude existence of a minimizer for the original MaxEnt UDC problem.
\begin{proposition}[Existence of a minimizer for the relaxed MaxEnt UDC problem]\label{prop:maxent-udc-relaxed-existence}
The relaxed version \eqref{eq:maxent-relaxation} of Problem \ref{prob:maxent_udc} admits a solution.
\end{proposition}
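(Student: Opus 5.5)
The plan mirrors the proof of Proposition~\ref{prop:existence-reduced-udc}. First separate the mass from the moment variables, exactly as in \eqref{eq:udc:total:problem}. Write $z=(\{m_t,\Sigma_t\}_{t=1}^T,\{v_t,S_t,Y_t,\Sigma_t^u\}_{t=1}^{T-1})$. Let $\mathcal{F}_\eps$ be the set cut out by \eqref{eq:maxent-udc-relaxed-mean-dynamics}--\eqref{eq:maxent-udc-relaxed-LMI-constraint}, together with $\Sigma_t^u\succeq\textup{O}$. Let $f_\eps(z)$ be the bracketed objective, with $-\log\det$ extended by $+\infty$ on singular matrices. Then \eqref{eq:maxent-relaxation} reads $\inf_{c>0,\,z\in\mathcal{F}_\eps} c f_\eps(z)+\gamma\psi(c)$.

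It therefore suffices to do two things: show that $f_\eps$ attains a finite minimum $p_\eps^\star$ on $\mathcal{F}_\eps$, and then treat the scalar problem $\min_{c} c p_\eps^\star+\gamma\psi(c)$. The scalar problem is handled verbatim as before. The map is continuous on $[0,\infty)$ and coercive, since $c\log c$ dominates. Note that $p_\eps^\star$ may now be negative because of the entropy term, but that does not affect coercivity. The chain of inequalities $cf_\eps(z)+\gamma\psi(c)\ge cp_\eps^\star+\gamma\psi(c)\ge c^\star p_\eps^\star+\gamma\psi(c^\star)$ then finishes the argument. If $c^\star=0$ arises, one should check that $c^\star>0$ from the stationarity formula, as in Corollary~\ref{cor:uot-optimal-c}; I expect a closed-form positive $c^\star$ analogous to \eqref{eq:cstar_udc}.

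\textbf{Feasibility and closedness.} For a finite feasible point, take $m_t=0$, $v_t=0$, $\Sigma_1=I$, $S_t=0$, $Y_t=0$ and $\Sigma_t^u=I$. Then $\Sigma_{t+1}=A\Sigma_tA^\top+BB^\top\succeq A\Sigma_tA^\top\succ\textup{O}$ by nonsingularity of $A$, so every term of $f_\eps$ is finite. The set $\mathcal{F}_\eps$ is closed, being an intersection of affine equalities and PSD cones. Lower semicontinuity of $f_\eps$ follows from continuity of the quadratic and trace terms and from lower semicontinuity of $X\mapsto\tr X-\log\det X$ (extended by $+\infty$), as already argued in Proposition~\ref{prop:existence-reduced-udc}.

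\textbf{Compactness of sublevel sets (main obstacle).} The difficulty is that the new term $\tr(\Sigma_t^u)-\eps\log\det\Sigma_t^u$ is not nonnegative, so the earlier argument "each term is bounded above because all are bounded below" needs a lower bound for it. I will use the scalar inequality $\lambda-\eps\log\lambda\ge \eps-\eps\log\eps$ applied eigenvalue-wise. This gives $\tr(\Sigma_t^u)-\eps\log\det\Sigma_t^u\ge d'(\eps-\eps\log\eps)$, a constant.

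Hence on $\{f_\eps\le C\}$ every summand is bounded above: $\|v_t\|^2$, $\tr Y_t$, $M(m_1,m_T)$, $S(\Sigma_1,\Sigma_T)$, and $\tr\Sigma_t^u-\eps\log\det\Sigma_t^u$. The last bound confines the eigenvalues of $\Sigma_t^u$ to a compact subinterval of $(0,\infty)$ (the same coercivity of $\lambda-\eps\log\lambda$ at $0$ and $\infty$), so $\Sigma_t^u$ is bounded. The remaining bounds are exactly as before. $\Sigma_1$ and $\Sigma_T$ are bounded, and $m_t$ is bounded through the mean recursion. From \eqref{eq:maxent-udc-relaxed-LMI-constraint}, $BS_tA^\top+AS_t^\top B^\top\preceq A\Sigma_tA^\top+BY_tB^\top$, so
\[
\Sigma_{t+1}\preceq 2A\Sigma_tA^\top+2BY_tB^\top+B\Sigma_t^uB^\top ,
\]
and induction bounds every $\Sigma_t$. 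Finally $\|S_t\|^2\le\|Y_t\|\,\|\Sigma_t\|$ bounds $S_t$. The sublevel set is therefore closed and bounded, hence compact, and $f_\eps$ attains its minimum by the Weierstrass argument (cf.\ \cite[Proposition 1.2]{Ambrosio1988}).
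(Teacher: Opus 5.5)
Your proposal is correct and follows the paper's route: the paper only says the proof goes ``along similar lines'' as Proposition~\ref{prop:existence-reduced-udc}, and you carry out exactly that argument. You also supply the two adaptations it actually needs, namely the eigenvalue-wise bound $\lambda-\eps\log\lambda\ge\eps-\eps\log\eps$ for the entropy term and coercivity of the mass problem even when $p_\eps^\star<0$. Your hedge on $c^\star>0$ closes immediately: $\frac{\mathrm{d}}{\mathrm{d}c}\bigl(c\,p_\eps^\star+\gamma\psi(c)\bigr)\to-\infty$ as $c\downarrow 0$ because of the $\log c$ terms in $\phi_{c_\alpha}$ and $\phi_{c_\beta}$, so the minimizer over $[0,\infty)$ is strictly positive.
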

We skip the proof of Proposition~\ref{prop:maxent-udc-relaxed-existence} since it follows along similar lines given in Proposition \ref{prop:existence-reduced-udc}. We may therefore apply the following tightness argument to an actual optimizer of \eqref{eq:maxent-relaxation}.

\begin{theorem}\label{thm:maxent-relax-tight}
    Any optimal solution to \eqref{eq:maxent-relaxation} satisfies \(Y_t = S_t \Sigma_t^{-1}S_t^\top\). Namely, relaxation from \eqref{eq:maxent-reform} to \eqref{eq:maxent-relaxation} is tight.
\end{theorem}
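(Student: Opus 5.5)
The plan is a direct perturbation (exchange) argument that exploits the fact that in \eqref{eq:maxent-relaxation} the variables \(Y_t\) and \(\Sigma_t^u\) enter the covariance recursion \eqref{eq:maxent-udc-relaxed-cov-dynamics} only through their sum \(Y_t+\Sigma_t^u\). The entropic term \(-\eps\log\det\Sigma_t^u\), by contrast, rewards moving mass from \(Y_t\) into \(\Sigma_t^u\).

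Fix an optimal solution of \eqref{eq:maxent-relaxation}, which exists by Proposition~\ref{prop:maxent-udc-relaxed-existence}. Suppose for contradiction that \(D_t\Let Y_t - S_t\Sigma_t^{\dagger}S_t^\top\neq\textup{O}\) for some \(t\). I work with the pseudoinverse so as not to presuppose \(\Sigma_t\succ\textup{O}\). By the generalized Schur complement \cite[Chapter 16, Theorem 16.1]{Gallier2011}, the LMI \eqref{eq:maxent-udc-relaxed-LMI-constraint} is equivalent to \(D_t\succeq\textup{O}\) together with the range condition \(S_t(I-\Sigma_t\Sigma_t^\dagger)=\textup{O}\).

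Define a competitor that keeps \(c, m_t, \Sigma_t, v_t, S_t\) unchanged and sets
\[
Y_t' \Let S_t\Sigma_t^{\dagger}S_t^\top, \qquad (\Sigma_t^{u})' \Let \Sigma_t^u + D_t .
\]
Feasibility is checked as follows.
\begin{itemize}
\item Since \(Y_t'+(\Sigma_t^u)'=Y_t+\Sigma_t^u\), the recursion \eqref{eq:maxent-udc-relaxed-cov-dynamics} still holds with the same \(\Sigma_{t+1}\).
\item The mean dynamics \eqref{eq:maxent-udc-relaxed-mean-dynamics} are untouched.
\item The LMI holds for the new variables, because the Schur complement is now \(\textup{O}\) and the range condition is unchanged.
\end{itemize}

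For the cost, the trace part satisfies \(\tr(Y_t')+\tr((\Sigma_t^u)')=\tr(Y_t)+\tr(\Sigma_t^u)\). Meanwhile \(\log\det(\Sigma_t^u+D_t)>\log\det\Sigma_t^u\), since \(\Sigma_t^u\succ\textup{O}\), \(D_t\succeq\textup{O}\) and \(D_t\neq\textup{O}\). Writing \(\det(\Sigma_t^u+D_t)=\det\Sigma_t^u\det(I+(\Sigma_t^u)^{-1/2}D_t(\Sigma_t^u)^{-1/2})\), the second factor exceeds one. Hence the bracketed term strictly decreases. Multiplying by \(c\) and leaving \(\gamma\psi(c)\) unchanged, the total objective strictly decreases, which contradicts optimality. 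Therefore \(Y_t=S_t\Sigma_t^\dagger S_t^\top\) for every \(t\). Whenever \(\Sigma_t\succ\textup{O}\) this is exactly the claimed identity \(Y_t=S_t\Sigma_t^{-1}S_t^\top\).

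The delicate points are two positivity facts needed for strictness.
\begin{enumerate}
\item \(\Sigma_t^u\succ\textup{O}\) at the optimum. This holds because otherwise \(-\eps\log\det\Sigma_t^u=+\infty\), while a finite feasible point exists, for instance the one used in Proposition~\ref{prop:existence-reduced-udc} with \(\Sigma_t^u=I\).
\item \(c^\ast>0\). As in Corollary~\ref{cor:udc-optimal-c}, the optimal mass has the closed-form positive expression of an exponential. Alternatively, the \(c\log c\) term in \(\psi\) has derivative \(-\infty\) at \(0\), so \(c=0\) cannot be optimal.
\end{enumerate}
If one insists on the statement with \(\Sigma_t^{-1}\), positive definiteness of \(\Sigma_t\) should also be noted:
\begin{itemize}
\item \(\Sigma_1\succ\textup{O}\) because \(S(\Sigma_1,\Sigma_T)<\infty\).
\item Inductively, \(\Sigma_{t+1}\succeq B\Sigma_t^uB^\top + (A+BK)\Sigma_t(A+BK)^\top\) with \(K=S_t\Sigma_t^{-1}\), using \(Y_t\succeq S_t\Sigma_t^{-1}S_t^\top\). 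This is positive definite since \(\Sigma_t\succ\textup{O}\) and \(A+BK\)...
\end{itemize}
This last step is the main obstacle, since \(A+BK\) need not be invertible. I would therefore present the proof with the pseudoinverse as above, which makes the tightness statement valid without requiring \(\Sigma_t\succ\textup{O}\).
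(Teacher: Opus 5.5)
Your proof is correct and is the same exchange argument as the paper's: move $\Delta_\tau = Y_\tau - S_\tau\Sigma_\tau^{-1}S_\tau^\top$ from $Y_\tau$ into $\Sigma_\tau^u$. This preserves the covariance recursion and the trace terms, and strictly increases $\log\det\Sigma_\tau^u$ by the determinant lemma. Your additions ($c^\ast>0$, the pseudoinverse version) only make explicit what the paper leaves implicit. The obstacle you flag at the end is not real. With $\Sigma_t\succ\textup{O}$ and $\Sigma_t^u\succ\textup{O}$, take any $w\neq 0$: either $B^\top w\neq 0$, giving $w^\top B\Sigma_t^uB^\top w>0$, or $(A+BK)^\top w=A^\top w\neq 0$ by nonsingularity of $A$. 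Hence $\Sigma_{t+1}\succ\textup{O}$ follows by induction, and $\Sigma_t^{-1}$ is well defined at every finite-cost feasible point.
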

To prove this theorem, we introduce the following lemma.

\begin{lemma}\label{lem:maxent-eigen}
Let \(A\) and \(B\) be symmetric nonzero matrices. If \(A \succ \textup{O}\) and \(B \succeq \textup{O}\), then \(\det(A+B) > \det(A).\)
\end{lemma}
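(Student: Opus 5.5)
The plan is to reduce to a congruence. Since \(A\succ \textup{O}\), its symmetric square root \(A^{1/2}\) is invertible, and we can write
\[
A+B = A^{1/2}\bigl(I + A^{-1/2}BA^{-1/2}\bigr)A^{1/2}.
\]
Multiplicativity of the determinant then gives \(\det(A+B)=\det(A)\,\det(I+C)\) with \(C\Let A^{-1/2}BA^{-1/2}\). Because \(\det(A)>0\), the claim is equivalent to \(\det(I+C)>1\).

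Next we analyse \(C\). It is symmetric and, by congruence, positive semidefinite, so its eigenvalues \(\lambda_1,\dots,\lambda_d\) are all nonnegative. Moreover \(C\neq \textup{O}\): otherwise \(B=A^{1/2}CA^{1/2}=\textup{O}\), contradicting the hypothesis that \(B\) is nonzero. A nonzero symmetric matrix has at least one nonzero eigenvalue, so some \(\lambda_j>0\). Hence
\[
\det(I+C)=\prod_{i=1}^d(1+\lambda_i)\ge 1+\lambda_j>1,
\]
since every factor is at least \(1\) and one factor is strictly larger than \(1\). This yields \(\det(A+B)>\det(A)\).

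There is no genuine obstacle here. The only point needing care is the strictness, which rests entirely on \(B\neq\textup{O}\); the assumption that \(A\) is nonzero is automatic from \(A\succ\textup{O}\). In the subsequent application to Theorem~\ref{thm:maxent-relax-tight}, one would take \(A=\Sigma_t^u\) and \(B=Y_t-S_t\Sigma_t^{-1}S_t^\top\succeq\textup{O}\). The lemma then shows that moving the slack into \(\Sigma_t^u\) strictly decreases \(-\eps\log\det\Sigma_t^u\) while leaving the trace and the covariance dynamics unchanged.
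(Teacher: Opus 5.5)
Your proposal is correct and follows the paper's proof essentially verbatim. Both use the congruence $A+B=A^{1/2}(I+C)A^{1/2}$ with $C=A^{-1/2}BA^{-1/2}$, note that $C$ is positive semidefinite and nonzero (your explicit justification via $B=A^{1/2}CA^{1/2}$ is slightly more careful than the paper's wording), and conclude from the eigenvalue product $\det(I+C)=\prod_i(1+\lambda_i)>1$.
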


\begin{proof}
   Since \(A\) is positive definite, its inverse matrix and square root matrices are well-defined. Therefore, we obtain
    \begin{align*}
        \det(A+B) &= \det\left(A^{\frac{1}{2}}(I + A^{-\frac{1}{2}}BA^{-\frac{1}{2}})A^{\frac{1}{2}}\right) \\
        & = \det A^{\frac{1}{2}}\det(I + A^{-\frac{1}{2}}BA^{-\frac{1}{2}}) \det A^{\frac{1}{2}} \\
        & =  \det A \det(I + A^{-\frac{1}{2}}BA^{-\frac{1}{2}}).
    \end{align*}
    Set \(C:=A^{-\frac{1}{2}}BA^{-\frac{1}{2}}\). Since \(B\) is positive semidefinite, the positive semidefiniteness of \(C\) follows. Also, since \(B\) is not a zero matrix and \(A\) is invertible, \(C\) is not a zero matrix. Therefore, letting \(\lambda_1,...,\lambda_n\) be eigenvalues of \(C\), it holds that \(\lambda_i\geq 0\) for all \(i\in\{1,...,n\}\) and there exists \(j\in\{1,...,n\}\) such that \(\lambda_j>0\). Since eigenvalues of \(I+C\) are \(1+\lambda_1,...,1+\lambda_n\), the determinant of \(I+C\) is \(\det(I+C) = \prod^n_{i=1}(1+\lambda_i) > 1.\) Therefore, we obtain the desired inequality.
\end{proof}

\begin{proof}[Proof of Theorem~\ref{thm:maxent-relax-tight}]
    While \(c, (m_t,v_t)_t\) are part of the decision variables in \eqref{eq:maxent-reform} and \eqref{eq:maxent-relaxation}, the claim under consideration depends only on the covariance-related variables. Accordingly, we focus on \((S_t,Y_t,\Sigma_t,\Sigma_t^u)_t\).
    The proof proceeds via contradiction. Let \((S_t,Y_t,\Sigma_t,\Sigma_t^u)_t\) be an optimal solution to \eqref{eq:maxent-relaxation}. For \(t\in\bbN\), let \(\Delta_t:=Y_t - S_t\Sigma_t^{-1}S_t^{\top}\). Suppose there exists \(\tau\in\{1,...,T-1\}\) such that \(\Delta_\tau\neq O\). Construct a new feasible solution \((\widetilde{S}_t,\widetilde{Y}_t,\widetilde{\Sigma}_t,\widetilde{\Sigma}_t^u)_t\) as follows:
    \begin{subequations}
        \begin{align*}
            & \widetilde{S}_t \Let S_t   \qquad\quad\quad\,\,\mbox{for }t=\{1,..,T-1\} \\
            &\widetilde{Y}_t \Let \begin{cases}
                Y_\tau - \Delta_\tau  &\mbox{if }t=\tau \\
                Y_t & \mbox{otherwise}  
            \end{cases} \\
            &\widetilde{\Sigma}_t \Let \Sigma_t   \qquad\quad\quad\,\,\mbox{for }t=\{1,..,T-1\} \\
            &\widetilde{\Sigma}_t^u \Let \begin{cases}
                \Sigma_\tau^u + \Delta_\tau  &\mbox{if }t=\tau \\
                \Sigma_t^u & \mbox{otherwise}  
            \end{cases}
        \end{align*}
    \end{subequations}
    It is straightforward to see that \((\widetilde{S}_t,\widetilde{Y}_t,\widetilde{\Sigma}_t,\widetilde{\Sigma}_t^u)_t\) is feasible for \eqref{eq:maxent-relaxation}. Let \(J(\cdot) := \tr(Y_t) + \tr(\Sigma_t^{u}) -\eps \log\det\Sigma_t^u\). Then
    \begin{align*}
        &J\bigl((S_t,Y_t,\Sigma_t,\Sigma_t^u)_t\bigr) - J\bigl((\widetilde{S}_t,\widetilde{Y}_t,\widetilde{\Sigma}_t,\widetilde{\Sigma}_t^u)_t\bigr) = \tr(Y_\tau) - \tr(\widetilde{Y_\tau}) + \tr(\Sigma_\tau^u) - \tr(\widetilde{\Sigma}_{\tau}^u) \\ 
        &\quad- \eps\log\det\Sigma_{\tau}^u + \eps\log\det\widetilde{\Sigma}_\tau^u  = \tr(Y_\tau) - \tr(Y_\tau- \Delta_{\tau}) + \tr(\Sigma_\tau^u)  \\
        & \quad - \tr(\Sigma_{\tau}^u + \Delta_{\tau}) +\eps\left(\log\det(\Sigma_{\tau}^u+\Delta_{\tau}) - \log\det\Sigma_{\tau}^u\right) = \eps\left(\log\det(\Sigma_{\tau}^u+\Delta_{\tau}) - \log\det\Sigma_{\tau}^u\right)
        \end{align*}
        For finiteness of the objective function, it is necessary to hold that \(\Sigma_\tau^u\succ \textup{O}\). Applying Lemma~\ref{lem:maxent-eigen} for \(A \Let \Sigma_{\tau}^u\) and \(B \Let \Delta_{\tau}\), we obtain \(\det(\Sigma_{\tau}^u+\Delta_{\tau}) > \det\Sigma_{\tau}^u,\)
        which, in turn, implies that 
        \[J\bigl((S_t,Y_t,\Sigma_t,\Sigma_t^u)_t\bigr) >J\bigl((\widetilde{S}_t,\widetilde{Y}_t,\widetilde{\Sigma}_t,\widetilde{\Sigma}_t^u)_t\bigr).\]
        This contradicts the optimality of \((S_t,Y_t,\Sigma_t,\Sigma_t^u)_t\).
\end{proof}
Building on the arguments so far, the proof of the following corollary is immediate.
\begin{corollary}[Existence of a minimizer for the MaxEnt UDC problem]
Problem~\ref{prob:maxent_udc} admits a solution.
\end{corollary}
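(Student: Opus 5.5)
The argument chains together three earlier results. First, by Proposition~\ref{prop:maxent-udc-relaxed-existence}, the relaxed problem \eqref{eq:maxent-relaxation} has a minimizer, say
\[
\bigl(c^\star,\{m_t^\star,\Sigma_t^\star\}_{t=1}^{T},\{v_t^\star,S_t^\star,Y_t^\star,\Sigma_t^{u\star}\}_{t=1}^{T-1}\bigr).
\]
Second, by Theorem~\ref{thm:maxent-relax-tight}, this minimizer satisfies $Y_t^\star=S_t^\star(\Sigma_t^\star)^{-1}(S_t^\star)^\top$ for every $t$. It is therefore feasible for the nonconvex problem \eqref{eq:maxent-reform}. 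Since \eqref{eq:maxent-relaxation} is a relaxation of \eqref{eq:maxent-reform}, its optimal value is no larger, so the point also minimizes \eqref{eq:maxent-reform}.

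Next we invert the change of variables. Set $K_t^\star\Let S_t^\star(\Sigma_t^\star)^{-1}$. Then $K_t^\star\Sigma_t^\star(K_t^\star)^\top=Y_t^\star$, so the constraints and objective of \eqref{eq:maxent-reform} map exactly onto those of \eqref{eq:maxent-gauss}. This gives a minimizer of \eqref{eq:maxent-gauss}. From it we build the Gaussian pair
\[
\pi_1^\star\Let c^\star\calN(m_1^\star,\Sigma_1^\star),\qquad
U_t^\star(\cdot\mid x)\Let \calN\bigl(v_t^\star+K_t^\star(x-m_t^\star),\,\Sigma_t^{u\star}\bigr).
\]
Its cost $J_4$ equals the optimal value of \eqref{eq:maxent-gauss}.

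Finally, let $(\pi_1,\{U_t\})$ be any feasible pair. By Theorem~\ref{thm:gauss-optimal-maxent-udc} its Gaussian replacement has cost no larger than $J_4(\pi_1,\{U_t\})$. That replacement is feasible for \eqref{eq:maxent-gauss}: its gain is $\Lambda_t\Sigma_t^\dagger$, and its control covariance is the generalized Schur complement. Hence its cost is bounded below by the optimal value of \eqref{eq:maxent-gauss}, which is attained by $(\pi_1^\star,\{U_t^\star\})$, so this pair solves Problem~\ref{prob:maxent_udc}.

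The main obstacle is the inversion step, which requires $\Sigma_t^\star\succ\textup{O}$. This holds at $t=1$ and $t=T$ because of the $-\log\det$ terms in $S(\Sigma_1,\Sigma_T)$. For intermediate times we argue by induction. Finiteness of the objective forces $\Sigma_t^{u\star}\succ\textup{O}$, so $\Sigma_{t+1}\succeq A\Sigma_tA^\top+\cdots$. More directly, $\Sigma_{t+1}=(A+BK_t)\Sigma_t(\cdot)^\top+B\Sigma_t^uB^\top$, where in lifted form the first part is $\succeq \textup{O}$.

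Positive definiteness of $\Sigma_{t+1}$ from $\Sigma_t\succ\textup{O}$ is not automatic when $B$ is not surjective. If it fails, we instead define $K_t^\star\Let S_t^\star(\Sigma_t^\star)^\dagger$. The LMI forces $S_t^\star=S_t^\star(\Sigma_t^\star)^\dagger\Sigma_t^\star$, so the same correspondence holds with the pseudoinverse. The Gaussian reduction already uses pseudoinverses, so the argument goes through unchanged.
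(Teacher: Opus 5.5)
Your proposal is correct and follows the same chain as the paper's proof. That chain is: existence for the relaxation via Proposition~\ref{prop:maxent-udc-relaxed-existence}, tightness via Theorem~\ref{thm:maxent-relax-tight}, undoing the lifting to obtain a Gaussian minimizer of \eqref{eq:maxent-gauss}, and then Theorem~\ref{thm:gauss-optimal-maxent-udc} to dominate every feasible pair.

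One correction to your last paragraph: positive definiteness does propagate even when $B$ is not surjective. Write $\Sigma_{t+1}=\begin{bmatrix}B & A\end{bmatrix}\begin{bmatrix}Y_t & S_t\\ S_t^\top & \Sigma_t\end{bmatrix}\begin{bmatrix}B & A\end{bmatrix}^\top+B\Sigma_t^uB^\top$, where both terms are positive semidefinite. Then $x^\top\Sigma_{t+1}x=0$ forces $B^\top x=0$ (since $\Sigma_t^u\succ\textup{O}$), hence $x^\top A\Sigma_tA^\top x=0$, hence $A^\top x=0$ and so $x=0$ by nonsingularity of $A$. Your pseudoinverse fallback is therefore never needed, which is just as well, since it would also have required re-proving Theorem~\ref{thm:maxent-relax-tight} with $\Sigma_t^\dagger$ in place of $\Sigma_t^{-1}$.
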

\begin{proof}
By Proposition~\ref{prop:maxent-udc-relaxed-existence}, the relaxed problem~\eqref{eq:maxent-relaxation} admits a minimizer. By Theorem~\ref{thm:maxent-relax-tight}, every minimizer of~\eqref{eq:maxent-relaxation} satisfies
\[
Y_t = S_t \Sigma_t^{-1} S_t^\top \quad\text{for } t=1,\dots,T-1,
\]
and therefore is also a minimizer of the exact finite-dimensional formulation~\eqref{eq:maxent-reform}. By the equivalence between \eqref{eq:maxent-gauss} and \eqref{eq:maxent-reform}, this minimizer yields a Gaussian initial measure and an affine-Gaussian control law solving the Gaussian-reduced problem~\eqref{eq:maxent-gauss}. Finally, by Theorem~\ref{thm:gauss-optimal-maxent-udc}, such a Gaussian solution is optimal among all feasible solutions of Problem~\ref{prob:maxent_udc}. Hence, Problem~\ref{prob:maxent_udc} admits an optimal solution.
\end{proof}

\section{Numerical experiments}\label{sec:NumExp}
In this section, we present the result of numerical simulation and observe the behavior of the optimal solutions which are computed by proposed algorithms. We implemented all the simulations on Python(v3.12.2) and used \texttt{cvxpy} library.

\subsection{UOT}\label{subsec:simulation-uot} We begin by illustrating our results for UOT problem. To this end, consider the two cases:
\begin{enumerate}[leftmargin=*]
\item (Case 1) \emph{Unbalanced}: \(\alpha_1 \Let 1.0 \,\mathcal{N}(-1.0, 0.9^2)\) and \(\beta_1 \Let 0.6 \,\mathcal{N}(1.2, 0.6^2)\).
 \item (Case 2) \emph{Balanced}: \(\alpha_2 \Let 1.0 \,\mathcal{N}(-1.0, 0.9^2)\) and \(\beta_2 \Let 1.0 \,\mathcal{N}(1.2, 0.6^2).\)
\end{enumerate}
In Problem \ref{prob:uot}, for each value of \(\gamma\), we apply Algorithm~\ref{alg:UOT} to the two one-dimensional test cases specified above and compute the corresponding optimal transport plan \(\pi^*\), its marginals \(\pi_1\) and \(\pi_2\), and the optimal transported mass \(c^*\). The resulting optimal marginals are compared with the prescribed reference measures in Fig.~\ref{fig:uot-marginal-unbal_bal} while the corresponding values of \(c^*\) are reported in Table~\ref{tab:optimization_stats_fw}. The top subfig. in Fig.~\ref{fig:uot-marginal-unbal_bal} corresponds to the unbalanced case \((\alpha,\beta)=(\alpha_1,\beta_1)\), where the two reference measures have different total masses, whereas the bottom subfig. in Fig.~\ref{fig:uot-marginal-unbal_bal} shows the balanced case \((\alpha,\beta)=(\alpha_2,\beta_2)\), where the reference measures have equal mass. Recall that in the UOT objective \eqref{eq:uot}, the parameter \(\gamma\) determines the weight of the KL penalties that enforce proximity of the marginals to the reference measures. Therefore, as \(\gamma\) increases, deviations of \(\pi_1\) and \(\pi_2\) from \(\alpha\) and \(\beta\) become increasingly expensive, and one expects the optimal marginals to move closer to the reference measures. This trend is clearly visible in both subfigures in Fig.~\ref{fig:uot-marginal-unbal_bal}: \(\pi_1\) and \(\pi_2\) depicted by the orange dashed curves, approach \(\alpha\) and \(\beta\), shown by the blue curves, as \(\gamma\) increases.
\begin{table}[b]
    \centering
    \begin{tblr}{
        colspec = {l c c},
        cells = {font=\normalsize},
    }
    \hline[2pt]
     \SetRow{azure9}
            \(\gamma\) & \(c^* \,\)(1. Unbalanced)  & \(c^* \,\)(2. Balanced)\\
            \hline
            0.2 & 0.289 & 0.373 \\
            \hline
            1.0 &  0.368 &  0.474 \\
            \hline
            10.0 &  0.634 & 0.819 \\
            \hline
            30.0 &  0.718 &  0.927 \\
     \hline[2pt]
    \end{tblr}
    \vspace{2.5mm}
    \caption{The optimal \(c^*\) for each \(\gamma\) in UOT Simulation.}
    \label{tab:optimization_stats_fw}
\end{table}

\begin{figure*}[t]
\centering
\includegraphics[scale=0.3]{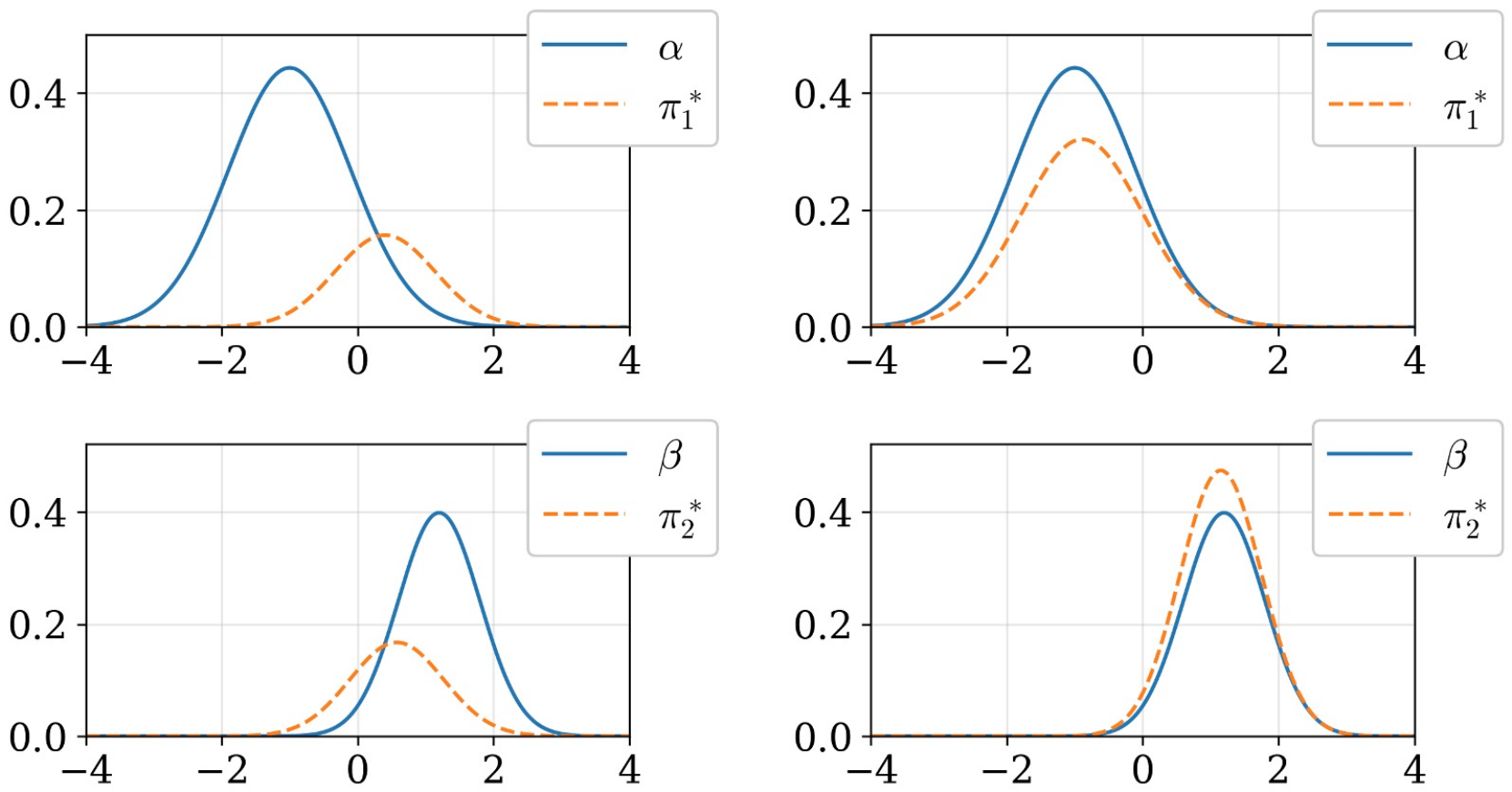}
\includegraphics[scale=0.33]{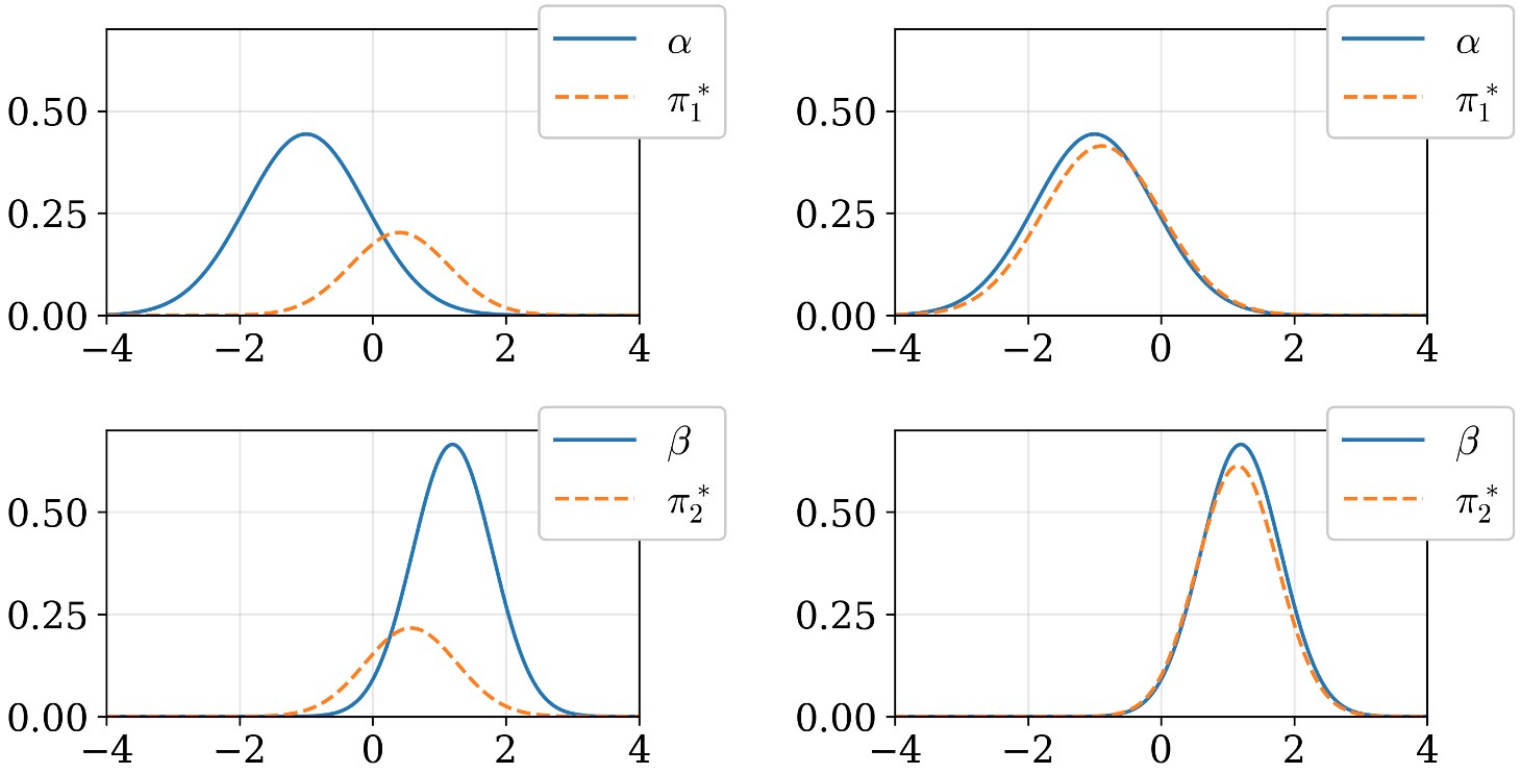}
\caption{The optimal marginal distribution in Case 1 (top) and Case 2 (bottom), where \(\gamma=0.2\) and \(\gamma=30\), respectively. The dotted orange line indicates the optimal marginal distribution, and the blue line specifies the given reference measure.}
\label{fig:uot-marginal-unbal_bal}
\end{figure*}

\begin{figure*}[t]
\centering
\setlength{\arrayrulewidth}{0.35pt}
\includegraphics[scale=0.25]{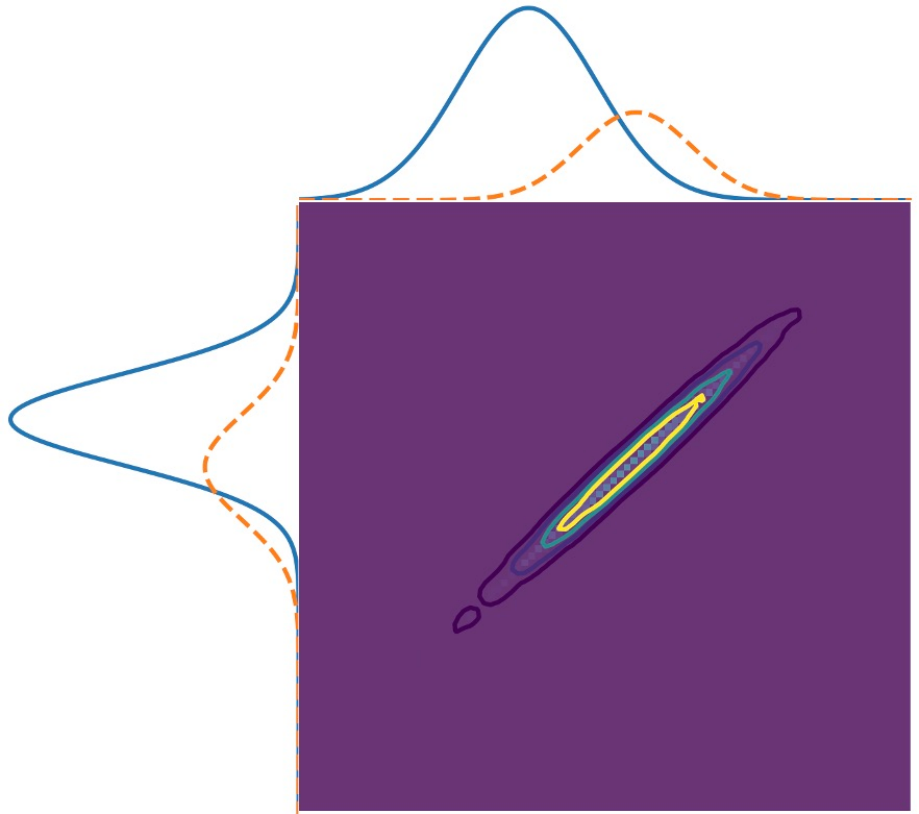}
\includegraphics[scale=0.25]{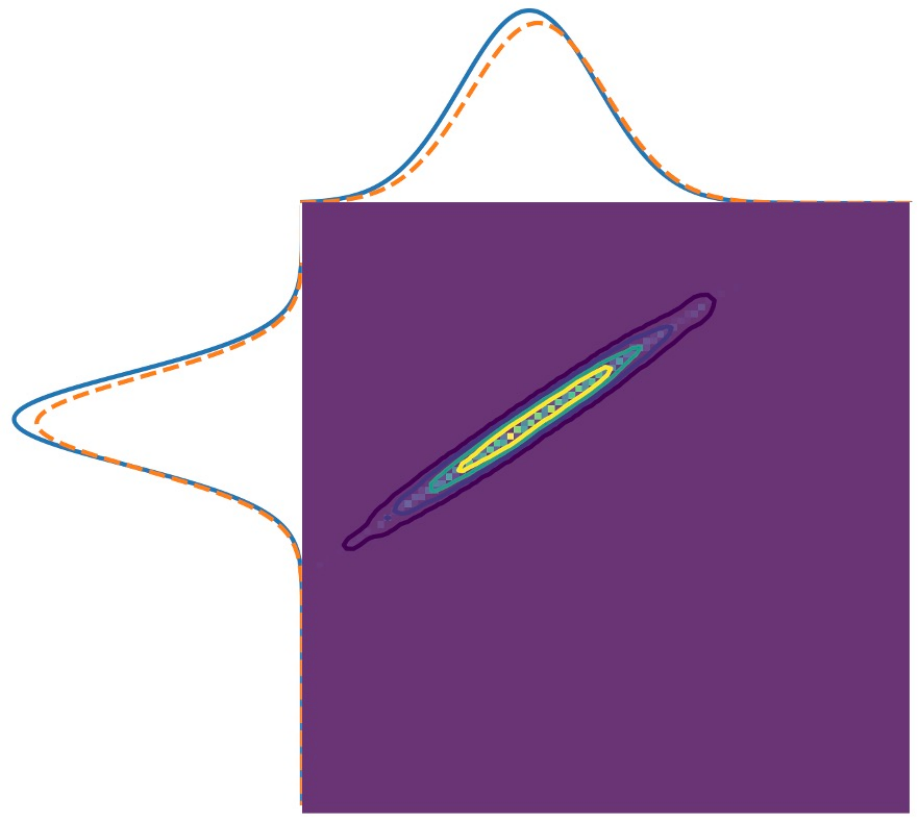}
\includegraphics[scale=0.23]{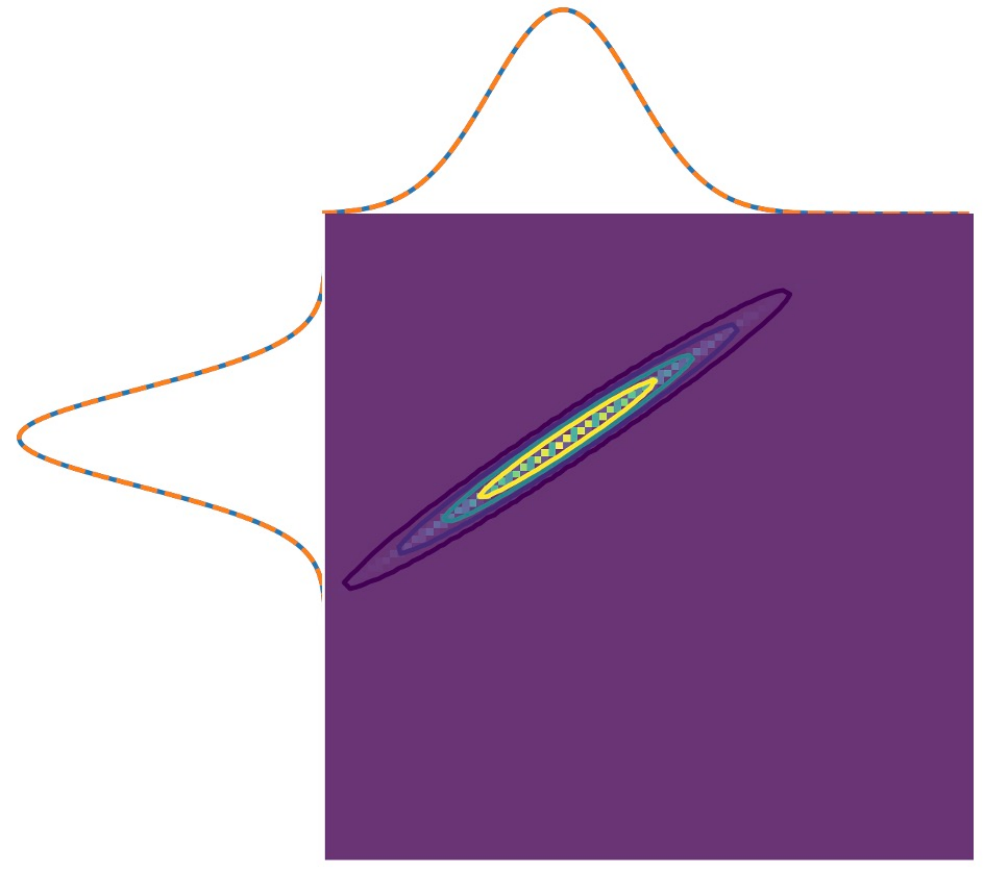}
\caption{Optimal transport plans for the UOT problem with $\gamma=0.2$ (left), \(\gamma =30\) (middle), and for standard OT (right).}
\label{fig:optimal-plan}
\end{figure*}

We now turn to the optimal transport plan in Case~2, where the two reference measures have the same total mass. In this case, the UOT problem is expected to recover the behavior of standard balanced OT as \(\gamma\) becomes large, because the KL penalties increasingly force the marginals of the plan to remain close to the prescribed reference measures. Figure~\ref{fig:optimal-plan} shows the optimal transport plan and its associated marginals for Case~2. The figure confirms the expected trend: as \(\gamma\) increases, the optimal plan becomes progressively closer to a standard OT coupling. Consistently, Table~\ref{tab:optimization_stats_fw} shows that the optimal transported mass \(c^*\) tends to \(1\) as \(\gamma\) increases. The behavior for small \(\gamma\) admits a complementary interpretation in terms of the transport cost. The first term in \eqref{eq:uot} favors transport over short distances and penalizes transport over longer distances; accordingly, the cost is smallest near the diagonal and increases away from it. When \(\gamma\) is small, the KL penalties on the marginals are relatively weak, so the optimizer prioritizes low-cost transport rather than close agreement with the reference measures. This is precisely what is observed in Fig.~\ref{fig:optimal-plan}: For \(\gamma=0.2\), the optimal transport plan is concentrated near the diagonal.

\subsection{UDC}\label{subsec:simulation-udc}
Next, we consider a UDC problem. The reference measures are \(\alpha = 1.0\,\mathcal{N}(-4.0,0.9^2),\) and \(\beta  = 0.4\,\mathcal{N}(4.0,0.6^2).\) The system parameters are set to \(A=B=1\). Applying Algorithm~\ref{alg:UDC} to this problem, Figure \ref{fig:UDC:state:meas:evol} (left and right) display the resulting optimal state-measure evolution for \(\gamma=3\) and \(\gamma=10\), respectively. In these figures, the solid red curve denotes the reference measure \(\alpha\), the solid blue curve denotes the reference measure \(\beta\), the dashed pink curve denotes the optimal initial measure \(\pi_1^*\), and the dashed light-blue curve denotes the optimal terminal measure \(\pi_T^*\). The thin solid grey curves represent the intermediate optimal state measures \(\{\pi_k^*\}\) for \(k=2,\dots,T-1\). As in the UOT example, the parameter \(\gamma\) controls the strength of the KL penalties that enforce proximity to the reference measures. Accordingly, as \(\gamma\) increases, the optimal initial and terminal measures \(\pi_1^*\) and \(\pi_T^*\) move closer to \(\alpha\) and \(\beta\), respectively. At the same time, in both cases, the mean of \(\pi_1^*\) lies to the right of the mean of \(\alpha\), while the mean of \(\pi_T^*\) lies to the left of the mean of \(\beta\). This reflects the trade-off between endpoint matching and control effort: rather than matching the reference measures exactly, the optimizer shifts the initial and terminal distributions toward one another so as to reduce the overall control cost.
\begin{figure}[t]
    \centering
    \includegraphics[scale=0.185]{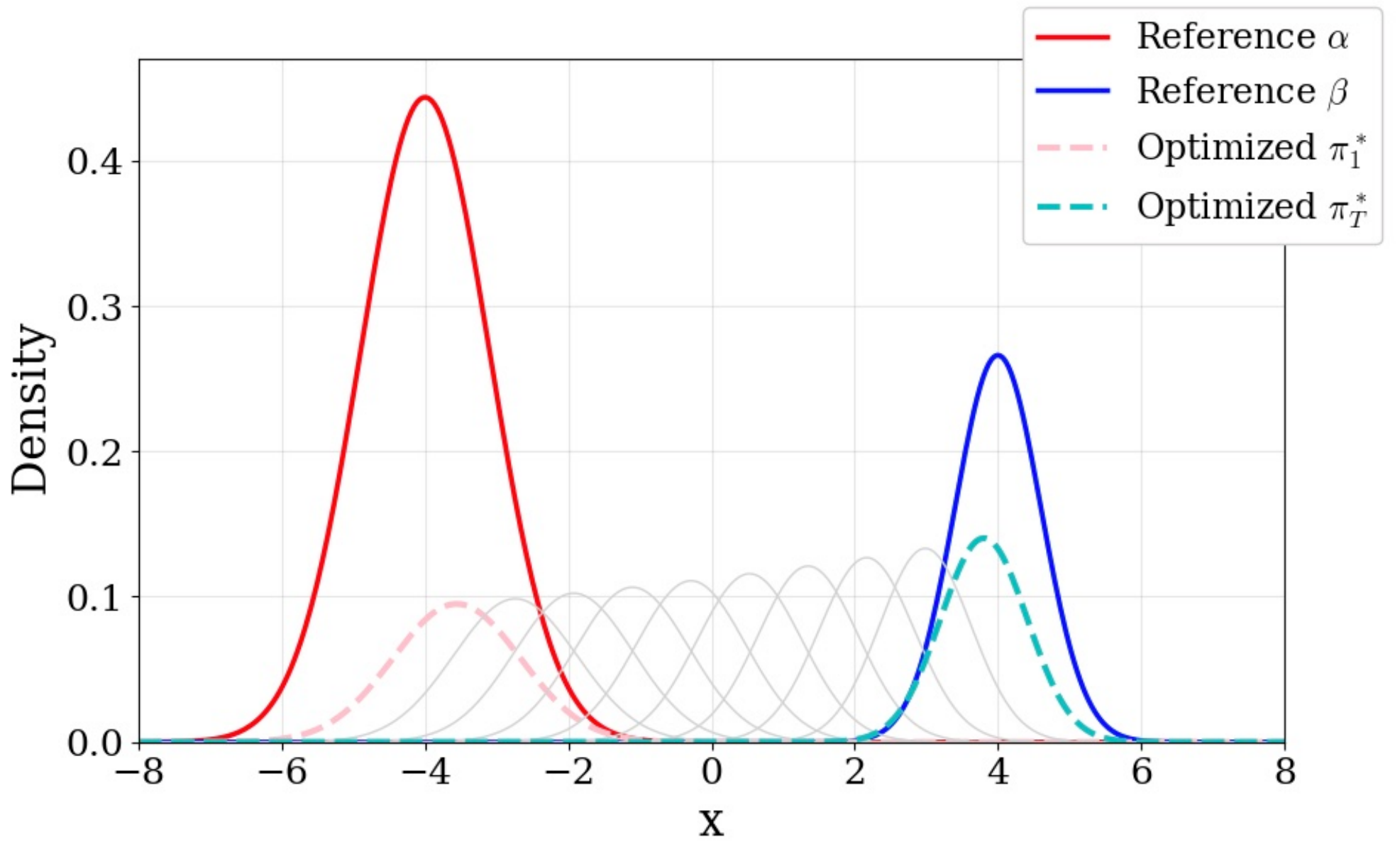}
    \includegraphics[scale=0.185]{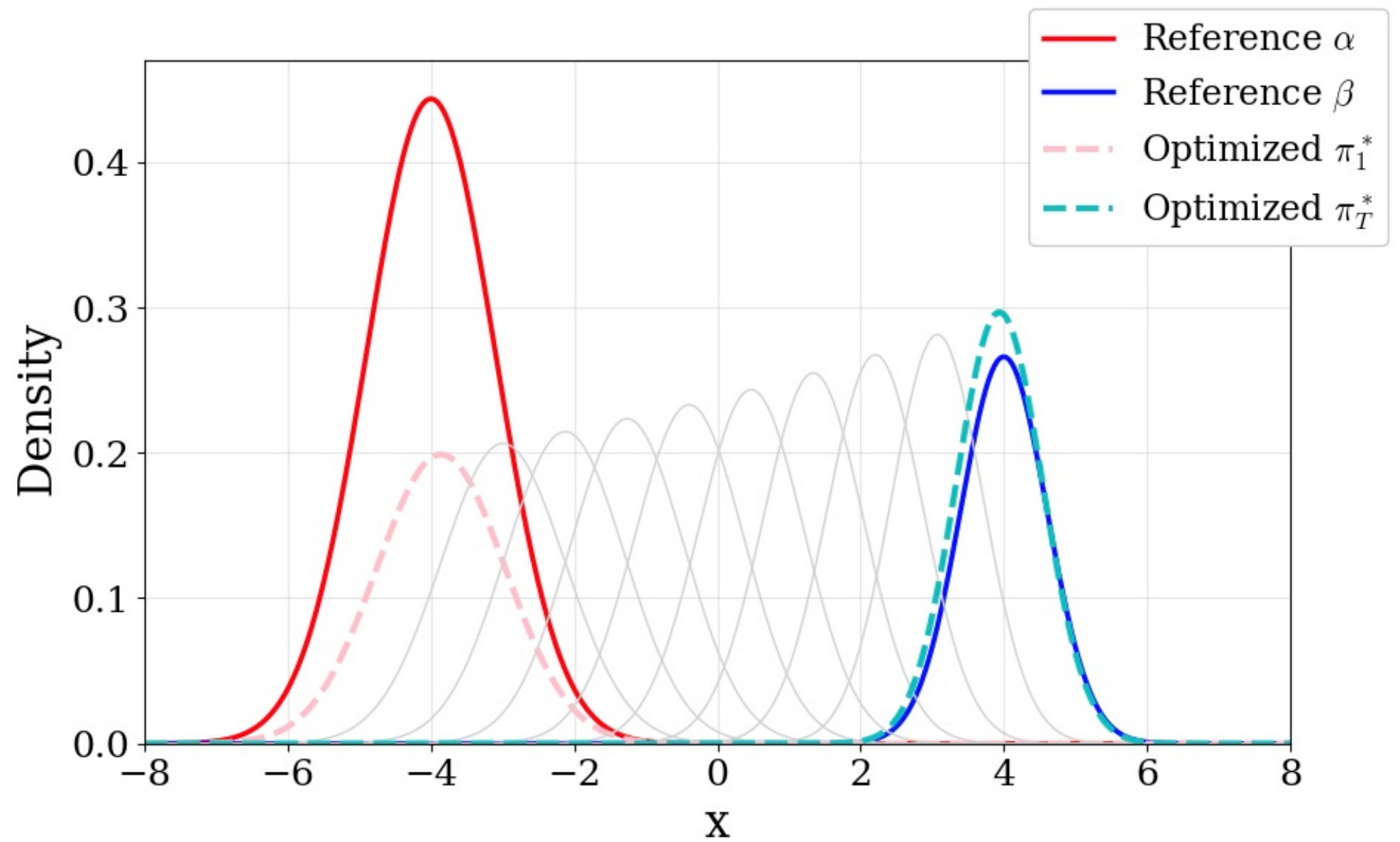}
    \caption{The reference measures \(\alpha\) and \(\beta\) with the optimal state measure \(\pi_k\) for UDC problem when \(\gamma=3.0\) (left) and \(\gamma = 10\) (right), computed by Algorithm~\ref{alg:UDC}. As \(\gamma\) increases, the optimized initial and terminal measures move closer to the prescribed reference measures.}
    \label{fig:UDC:state:meas:evol}
\end{figure}

\subsection{EUOT}\label{subsec:simulation-euot}
We present some numerical results for the entropic unbalanced optimal transport problem. We use the balanced reference measures $(\alpha_2,\beta_2)$ introduced in Case 2. For fixed $\gamma$, we solve the finite-dimensional Gaussian reformulation of Problem \ref{prob:euot} in \eqref{eq:finite-dim-euot} for several values of the entropic regularization parameter $\sigma$. The resulting optimal transport plans are shown in the top two subfigures in Fig.~\ref{fig:Euot_and_MaxEnt}. The parameter $\sigma$ controls the strength of the KL regularization term $D_{\mathrm{KL}}(\pi\|\alpha\otimes\beta)$. Hence, larger values of $\sigma$ encourage the optimal plan to stay closer to the independent product measure $\alpha\otimes\beta$. Since this term penalizes highly concentrated couplings, increasing $\sigma$ is expected to produce a transport plan with larger spread. This behavior is observed in Fig.~\ref{fig:Euot_and_MaxEnt}: for small $\sigma$, the optimal plan remains relatively concentrated, whereas for larger $\sigma$, the mass of the optimal plan is more broadly distributed. This result also highlights a qualitative difference between UOT and EUOT. In the UOT formulation, the optimal coupling between Gaussian marginals is induced by an affine transport map, and hence the resulting plan is concentrated along an affine relation in the small-penalty regime. In contrast, the EUOT formulation contains the additional entropy regularization term, which explicitly favors diffused transport plans. Therefore, even when the marginal reference measures are the same as those used in the UOT experiment, the resulting optimal coupling becomes smoother as $\sigma$ increases.

\begin{figure}[]
\centering
\includegraphics[scale=0.3]{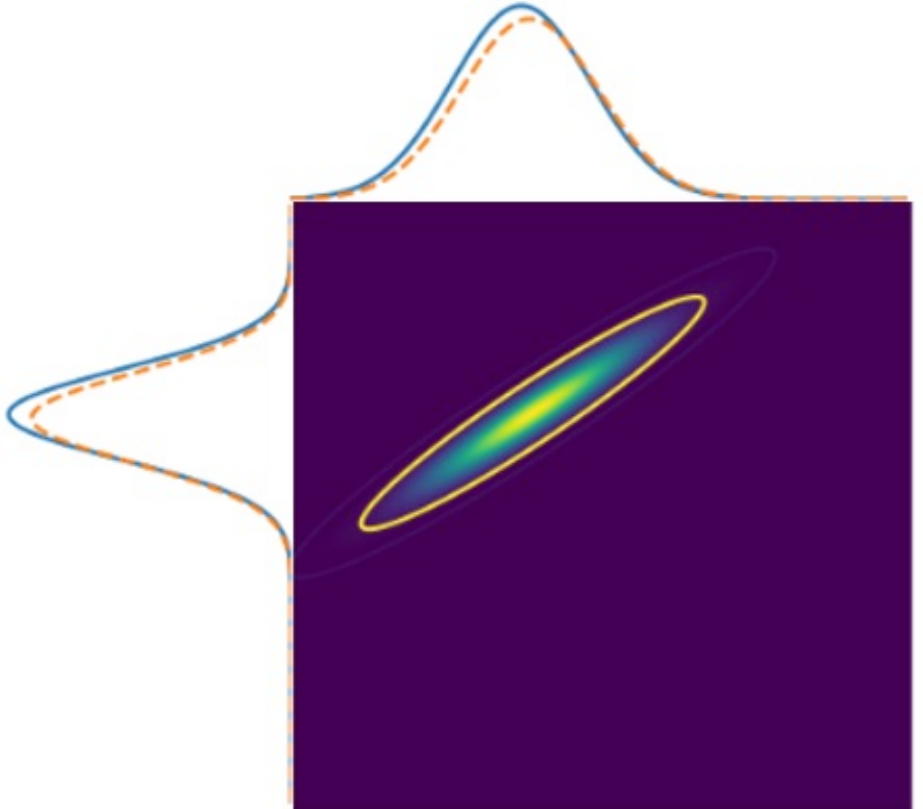}
\includegraphics[scale=0.29]{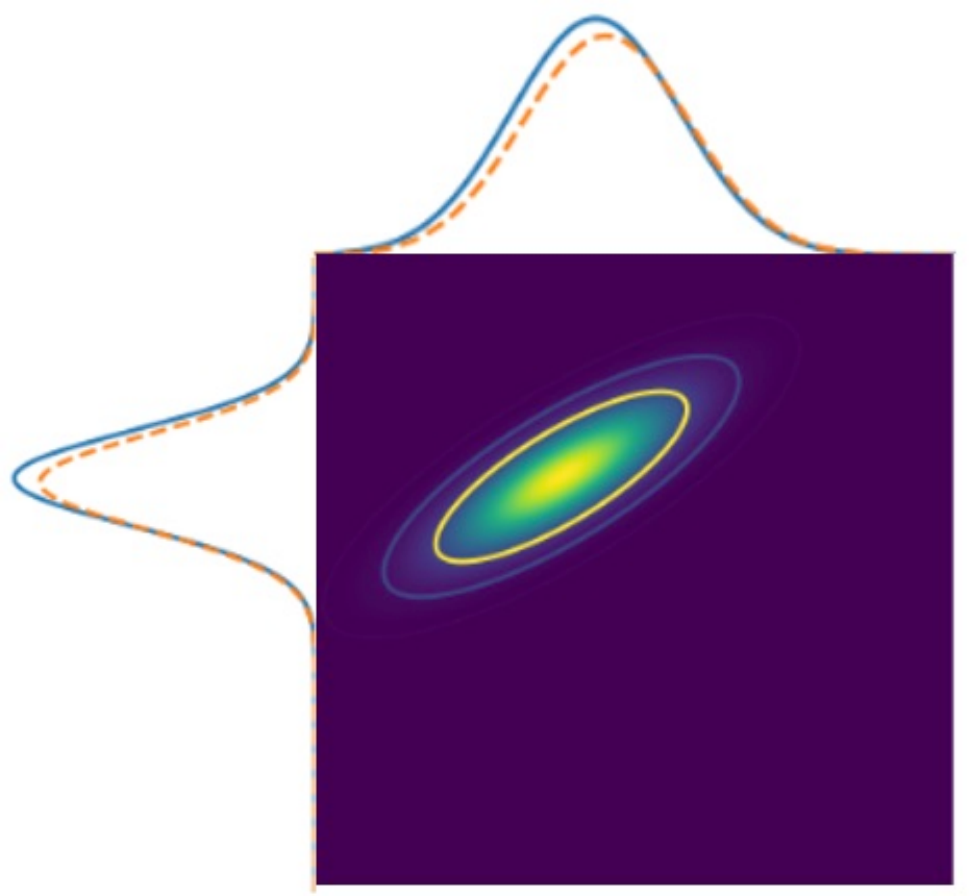}
\vspace{5mm}
\includegraphics[scale=0.3]{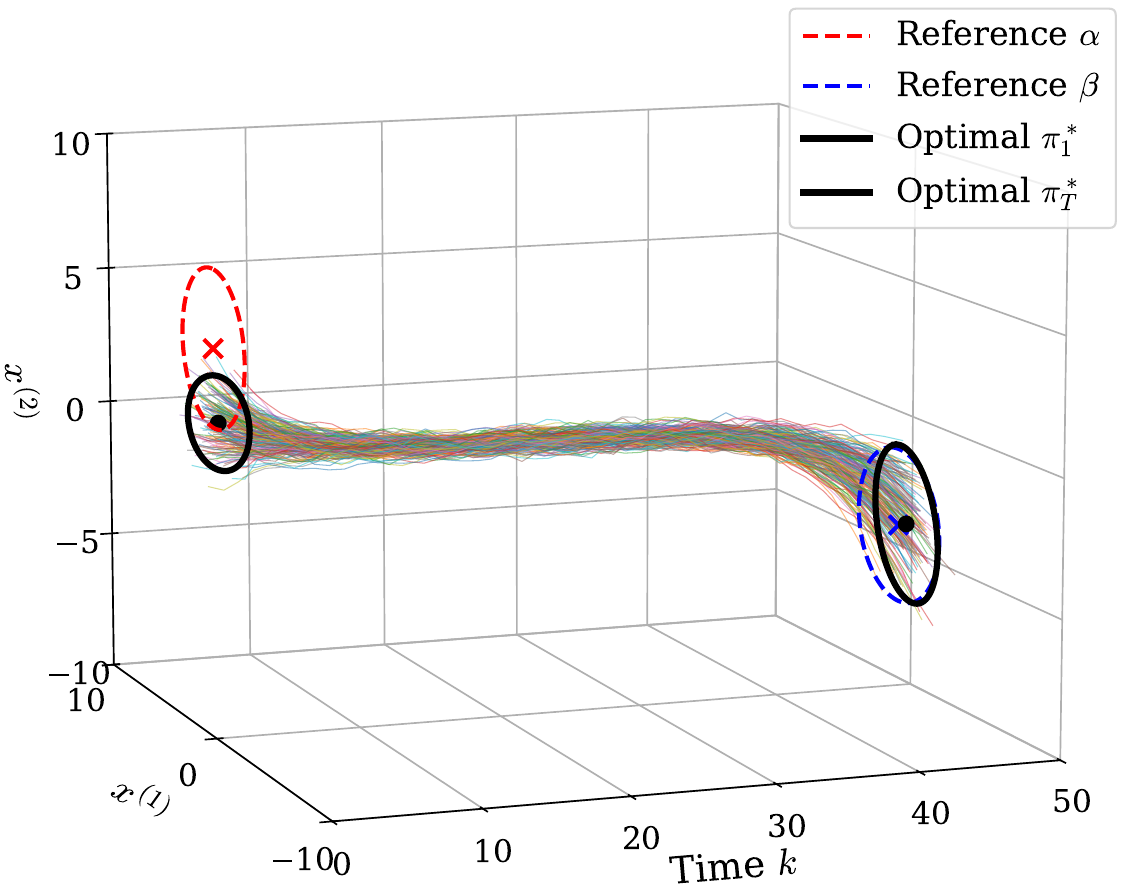}
\includegraphics[scale=0.3]{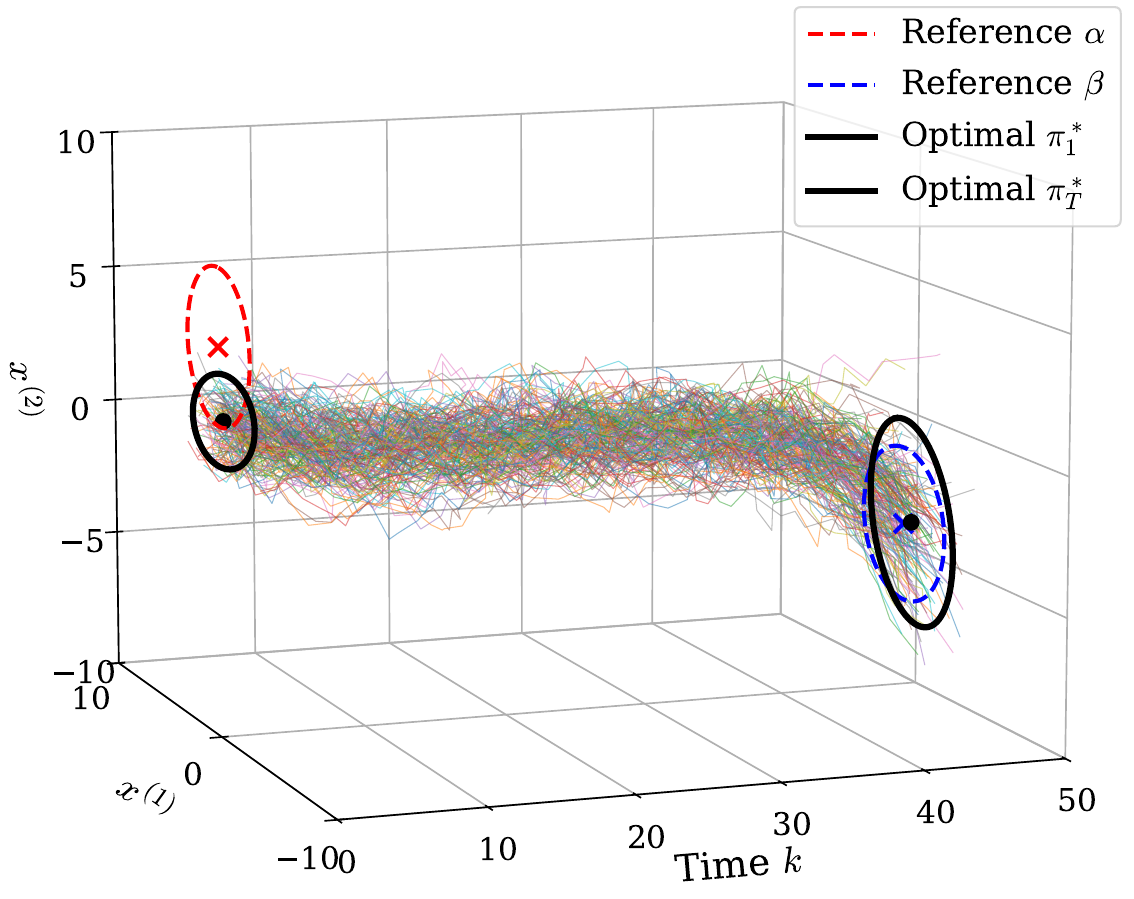}
\caption{(Top) The optimal transport plans for the entropic UOT problem. For \(\gamma=30\), the left-hand sub figure depicts the result when \(\sigma=0.1\), and the right-hand subfigure when \(\sigma=0.5\). (Right) 200 samples of the optimal state process for \eqref{eq:sim:maxent:system-mat} and \eqref{eq:sim:maxent:reference-measure}. For fixed \(\gamma=1.0\), the left-hand subfigure depicts the result when \(\eps=0.02\), and the right-hand figure when \(\eps=0.4\).}
\label{fig:Euot_and_MaxEnt}
\end{figure}

\subsection{MaxEnt UDC}\label{subsec:simulation-maxent}
Finally, we consider the maximum-entropy unbalanced density control problem in the two-dimensional case. Our state and actuation matrices are given by
\begin{align}\label{eq:sim:maxent:system-mat}
    A \Let \begin{bmatrix}
        0.9 & 0.1 \\ 0.05 & 1.2
    \end{bmatrix},\,\,B \Let I_2
\end{align}
The reference densities \(\alpha=\mathcal{N}(m_\alpha,\Sigma_\alpha)\) and \(\beta=\mathcal{N}(m_\beta,\Sigma_\beta)\) are determined by
\begin{align}\label{eq:sim:maxent:reference-measure}
   \hspace{-2mm} m_\alpha = \begin{pmatrix}
        0 \\ 4.0
    \end{pmatrix},\,
    m_\beta = \begin{pmatrix}
        0 \\ -4.0
    \end{pmatrix}, 
    \Sigma_\alpha = \Sigma_\beta=
    \begin{bmatrix}
        2.0 & 0 \\ 0 & 2.0
    \end{bmatrix}
\end{align}
and we set \(T=50\). We solve the relaxed finite-dimensional formulation \eqref{eq:maxent-relaxation} for different values of the entropy weight \(\eps>0\) with fixed \(\gamma\). The corresponding optimal state-measure evolutions are displayed in the the bottom subfigures in Fig.~\ref{fig:Euot_and_MaxEnt}.
Compared with the standard UDC result in \S\ref{subsec:simulation-udc}, the solution to Problem~\ref{prob:maxent_udc} contains an additional mechanism that encourages randomized control policies. Indeed, the objective of Problem \ref{prob:maxent_udc} includes the term $-\eps H(U_t(\cdot|x_t))$, and therefore a larger $\eps$ rewards higher-entropy control inputs. As a consequence, increasing $\eps$ leads to a larger control covariance and produces a more diffusive evolution of the state measure. This trend can be seen in Fig.~\ref{fig:Euot_and_MaxEnt}: when $\eps$ is small, the optimal control has a higher variances, and the intermediate state measures become more spread out.


\section{Conclusion}
\label{sec:conclusion}
In this paper, we developed a convex optimization-based global solution method for the UOT problem and formulated the UDC problem as a control-theoretic dynamical extension of UOT. We also studied entropy-regularized variants of these problems when the reference measures are Gaussian. A common feature of all these formulations is that, although they are originally posed as infinite-dimensional variational problems, their optimal solutions can be characterized within the Gaussian class. This enables an exact reduction to finite-dimensional optimization problems. While the resulting finite-dimensional problems are not necessarily convex in their original form, we showed that they can be solved globally by separating the mass optimization and applying lossless convex relaxations.

Several directions remain for future work. First, it would be interesting to apply the proposed framework to real-world tasks, such as single-cell RNA-seq data analysis \cite{morimoto2025}, where unbalanced transport is expected to be useful in the presence of observation corruption and outliers. Another important direction is to establish a connection with the unbalanced Schr\"odinger Bridge problem \cite{ref:Chen:UOT:2}. In the balanced setting, there is an elegant equivalence between SB problems and stochastic optimal control problems. In particular, in the discrete-time case, the MaxEnt optimal control problem is equivalent to an SB problem with a suitably chosen reference process when the initial and target distributions are Gaussian \cite{Ito2023}. Developing an unbalanced counterpart of this equivalence is an interesting topic for future research.

\bibliographystyle{amsalpha}
\bibliography{refs}

\end{document}